\numberwithin{equation}{section}
\newcommand\dessins[3]{}
\newcommand\invis[1]{}
\newcommand\footinvis[1]{}
\newcommand\nouvellepage{}
\def\ul{\underline}
\def\bs{\setminus}
\def\Ra{\Rightarrow}
\def\Lgrh{\Longrightarrow} 
\def\Lra{\Leftrightarrow}
\def\bfp{{\mathbf p}}
\def\bfq{{\mathbf q}}
\def\bfa{{\mathbf a}}
\def\bfb{{\mathbf b}}
\def\bfr{{\mathbf r}}
\def\bfk{{\mathbf k}}
\def\iff{{\em if and only if}}
\def\wrt{{\em w.r.t.}}
\def\ie{{\em i.e.}} 
\def\eg{{\em e.g.}} 
\def\etc{{\em etc.}}
\def\cf{{\em cf.}} 
\def\N{{\mathbb N}}
\DeclareMathOperator{\id}{id}
\def\multordby{\cdot}
\def\multsetby{\times}
\def\uloplus{{\;\ul\oplus\;}}
\theoremstyle{plain}
\newtheorem{lem}{Lemma}[section]
\newtheorem{prop}{Proposition}[section]
\newtheorem{cor}{Corollary}[section]
\newtheorem{theo}{Theorem}[section]
\theoremstyle{definition}
\newtheorem{notat}{Notation}[section]
\title[Length of an intersection]{Length of an intersection}
\author[C. Delhomm\'e]{Christian Delhomm\'e}
\address{LIM-ERMIT, Universit\'e de la R\'eunion, Facult\'e des Sciences et Technologies - PTU, 2, rue Joseph Wetzel, 97490 Sainte-Clotilde, France}  
\email{delhomme@univ-reunion.fr}
\author [M. Pouzet]{Maurice Pouzet}
\address{ICJ, Math\'ematiques, Universit\'e Claude-Bernard Lyon1, 43 bd. 11 Novembre 1918, 69622 Villeurbanne Cedex, France and Mathematics \& Statistics Department, University of Calgary, Calgary, Alberta, Canada T2N 1N4} 
\email{pouzet@univ-lyon1.fr, mpouzet@ucalgary.ca }
\def\acknolDIMACOS%
\date{\today} 
\keywords{Ordered sets, ordinal length, well quasi order, well partial ordering}
\begin{document}
\maketitle


\begin{abstract}
A poset $\bfp$ is well-partially ordered (WPO) if all its linear extensions are well orders~;
the supremum of ordered types of these linear extensions is the {\em length}, $\ell(\bfp)$ of $\bfp$.
We prove that if the vertex set $X$ of $\bfp$ is infinite, of cardinality $\kappa$, 
and the ordering $\leq$ is the intersection of finitely many partial orderings $\leq_i$ on $X$,
$1\leq i\leq n$, 
then, letting $\ell(X,\leq_i)=\kappa\multordby q_i+r_i$, with $r_i<\kappa$, denote the euclidian division by $\kappa$ (seen as an initial ordinal) of the length of the corresponding poset~:%
%
%
\[
\ell(\bfp)<
\kappa\multordby\bigotimes_{1\leq i\leq n}q_i+
\Big|\sum_{1\leq i\leq n} r_i\Big|^+
\]
where $|\sum r_i|^+$ denotes the least initial ordinal greater than the ordinal $\sum r_i$.
This inequality is optimal (for $n\geq 2$).
\end{abstract}


\section{Introduction}

\subsection{Presentation of the result}

Let $\mathbf p:= (X, \leq)$ be an ordered set(poset). 
This poset is \emph{well founded} if every non-empty subset of the vertex set $X$ contains some minimal element. 
It is \emph{well partially ordered} (WPO for short) if,  in addition,  it has no  infinite antichain
\ie\ if every infinite set of vertices has comparable elements. 
Since its introduction by Erd\"os and Rado and by Higman~\cite{higman52} the notion of WPO has attracted considerable interest in various areas of mathematics and computer science
(\eg~\cite{robertson1}, \cite {lalement}).  
It was  observed by Wolk \cite{wolk} that a poset  $\mathbf p$ is WPO if and only \iff\ its linear extensions are all well orders. 
It has been proved by
de Jongh and Parikh~\cite{dejongh-parikh}
that  there is a largest ordinal type of these linear extensions, 
that we denote $\ell(\mathbf p)$ and call the \emph{length of $\mathbf p$} (or of its ordering $\leq$). 
The length behaves nicely \wrt\ to some poset and ordinal operations. 
For an example, in~\cite{dejongh-parikh},  de Jongh and Parikh extended  Carruth formulas~\cite{carruth} for direct sum and cartesian product  of well ordered chains, showing that if $\mathbf p$ and $\mathbf q$ are two WPO's then the lengths of their direct sum $\mathbf p\uplus \mathbf q$ and of their cartesian product $\mathbf p\times \mathbf q$ (which are WPO) satisfy~:
\begin{equation}\label{eq:dejongh}
\ell(\mathbf p \uplus \mathbf q)=\ell(\mathbf p)\oplus \ell(\mathbf q)
\text{~and }
\ell(\mathbf p\times \mathbf q)= \ell(\mathbf p)\otimes\ell(\mathbf q)
\end{equation}
where 
$\oplus$ and $\otimes$ denote the Hessenberg addition and multiplication, also called natural operations.  
Since then, the ordinal length of various WPO has be computed (\cite{schmidt,kriz,pouzet,rathjen}). 

In this note we consider posets $\mathbf p$ of which the ordering $\leq $ is the intersection of finitely many orderings $\leq_i$, $1\leq i\leq n$,  such that the  corresponding posets $\mathbf p_i$ are WPO. Since such a poset can be embedded into the direct product $\prod_{1\leq i\leq n} \mathbf p_i$,  it is WPO and it follows from de Jongh-Parikh formula~\eqref{eq:dejongh} above that $\ell(\mathbf p)\leq \bigotimes_{1\leq i\leq n}\ell(\mathbf p_{i})$. This upper bound is crude. 
The best upper bound is given in the next theorem: 

\begin{theo}\label {theo:main}
Given finitely many equipotent ordinals $\alpha_i$, $1\leq i\leq n$,  of cardinality $\kappa$,
consider their euclidian division by $\kappa$~:
$\alpha_i=\kappa\multordby q_i+ r_i$ with $r_i<\kappa$,
and the least initial ordinal $|\sum_{1\leq i\leq n} r_i|^+$ greater than $\sum_{1\leq i\leq n}r_i$. Then~:
\[
\kappa\multordby\bigotimes_{1\leq i\leq n}q_i+ \big|\sum_{1\leq i\leq n} r_i\big|^+
\] 
is the least strict upper bound of the lengths of the posets $\mathbf p=(X,\leq)$ of which the ordering $\leq$ is the  intersection of $n$ orderings $\leq_{i}$, $1\leq i\leq n$, such that each $(X,\leq_i)$ is a WPO of length $\alpha_i$.  
This strict least upper bound is unchanged if each ordering $\leq_i$ is required to be linear. 
\end{theo}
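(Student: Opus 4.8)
The plan is to prove the statement in two halves: an upper bound (every such poset $\mathbf p$ has length strictly below the displayed ordinal) and an optimality half (the bound is attained as a supremum, in fact even when all the $\leq_i$ are linear orders). Throughout I would reduce to a convenient normal form: write $\ell(\mathbf p_i) = \alpha_i = \kappa\multordby q_i + r_i$ with $r_i<\kappa$, and recall from de Jongh--Parikh~\eqref{eq:dejongh} and the embedding $\mathbf p \hookrightarrow \prod_i \mathbf p_i$ that $\ell(\mathbf p) \le \bigotimes_i \alpha_i$; the work is to see that the ``high part'' of this Hessenberg product, i.e.\ $\kappa\multordby\bigotimes_i q_i$, is the genuine leading term, and that the error term is controlled by $|\sum r_i|^+$ rather than by the full $\bigotimes_i r_i$ or by cross terms.

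For the upper bound, the key device I expect to use is a \emph{ranking/partitioning argument on $X$}. Since each $(X,\leq_i)$ is WPO of length $\kappa\multordby q_i + r_i$, one can stratify $X$ according to, say, the $\leq_i$-foundation rank or a decomposition of $\mathbf p_i$ into an initial piece of length $\kappa\multordby q_i$ and a final piece of length $<\kappa$ (here I would want a lemma, presumably proved earlier in the paper or standard, that a WPO of length $\kappa\multordby q + r$ with $r<\kappa$ splits as a disjoint ``sum'' of a WPO of length $\kappa\multordby q$ sitting below one of length $<\kappa$, or at least that only boundedly-many — fewer than $\kappa$ — of its points have rank $\ge \kappa\multordby q$). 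Combining these stratifications across $i=1,\dots,n$ partitions $X$ into a part $X'$ that embeds into $\prod_i (\text{length }\kappa\multordby q_i)$, contributing at most $\kappa\multordby\bigotimes_i q_i$ to the length, and a ``small'' part whose size and length are governed by the sum $\sum_i r_i$; the length of $\mathbf p$ is then at most the natural sum of these, and one checks that this lands strictly below $\kappa\multordby\bigotimes_i q_i + |\sum_i r_i|^+$ because the small part has length $\le$ some ordinal of cardinality at most $|\sum r_i|$, hence $< |\sum r_i|^+$, while additivity of $\ell$ over such a decomposition (again via~\eqref{eq:dejongh}-type reasoning, or a lemma that $\ell(\mathbf a \uloplus \mathbf b) \le \ell(\mathbf a)\oplus\ell(\mathbf b)$ for a lexicographic-type sum) keeps the leading term from growing.

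For optimality — showing the bound is a least strict upper bound, attained even with linear $\leq_i$ — I would construct, for each ordinal $\beta$ strictly below the target, explicit linear orders $\leq_1,\dots,\leq_n$ on a set $X$ of cardinality $\kappa$, each of order type $\alpha_i$, whose intersection has length $>\beta$. The natural candidate: realize the product poset $\prod_i q_i$ (of length $\bigotimes_i q_i$) on a set $Y$ as an intersection of $n$ well-orders of types $q_i$ via a standard "sorting network"/multidimensional construction, blow up each point of $Y$ into a copy of $\kappa$ to get the $\kappa\multordby\bigotimes q_i$ part, and then append, after everything, a configuration of size roughly $\sum r_i$ whose $n$-fold-intersection length can be pushed arbitrarily close to $|\sum r_i|^+$ — here one uses that on a set of size $\sum r_i$ one can find $n$ linear orders of lengths $r_i$ (suitably padded) whose intersection is an antichain or near-antichain contributing length up to any $\gamma<|\sum r_i|^+$. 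One must be careful to splice these two blocks so that each $\leq_i$ still has exactly type $\alpha_i = \kappa\multordby q_i + r_i$ (the tail block uses exactly the ``$+r_i$'' of each $\leq_i$), and to verify the intersection's length is at least $\kappa\multordby\bigotimes q_i + \gamma$ for the chosen $\gamma$.

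The main obstacle, I expect, is the optimality construction — specifically engineering $n$ linear extensions on a single $\kappa$-sized set that \emph{simultaneously} have the prescribed types $\alpha_i$ \emph{and} an intersection whose length realizes both the leading Hessenberg product $\bigotimes q_i$ and, additively on top of it, an ordinal arbitrarily close to $|\sum r_i|^+$. The case $n=2$ is the model to get right first; the general $n$ is then a matter of iterating a two-factor gadget, but keeping precise track of order types under the splice (and handling the degenerate cases where some $q_i=0$, or some $r_i=0$, or $\kappa$ is a limit vs.\ successor cardinal) is where the technical care concentrates. On the upper-bound side the only delicate point is justifying the additive splitting of $\ell$ over the $X'\sqcup(\text{small})$ decomposition with the correct — non-natural-sum-blowing — leading term, which I would isolate as a lemma of the form: if $X = A \cup B$ with $A$ an initial segment under every $\leq_i$ and $|B| \le \mu$, then $\ell(X,\leq) \le \ell(A,\leq) \oplus \mu'$ for a suitable $\mu' < \mu^+$.
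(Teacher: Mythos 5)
Your upper-bound argument has a circular step at its core. After splitting each $\bfp_i$ into an initial part of length $\kappa\multordby q_i$ and a small tail, you assert that the part $X'$ of $X$ lying in all the initial pieces ``embeds into $\prod_i(\text{length }\kappa\multordby q_i)$, contributing at most $\kappa\multordby\bigotimes_i q_i$''. But the embedding into the product only yields $\bigotimes_i(\kappa\multordby q_i)$, which is far larger than $\kappa\multordby\bigotimes_i q_i$ (already $\kappa\otimes\kappa>\kappa=\kappa\multordby(1\otimes 1)$ for $n=2$, $q_1=q_2=1$); the statement that an intersection of $n$ WPOs of lengths $\kappa\multordby q_i$ has length at most $\kappa\multordby\bigotimes_i q_i$ \emph{is} the theorem in the case of zero remainders, and your outline supplies no mechanism for it. The paper's mechanism is the inductive formula $\ell(\bfp)=\sup^+_{x}\ell(\{\bfp\not\geq x\})$ together with the observation that for an intersection $\{X\not\geq_\bfp x\}=\bigcup_i\{X\not\geq_i x\}$, so that each such set is a union of pieces in each of which one factor has strictly smaller length; this gives $\varphi(\alpha,\beta)\leq\varphi_\#(\ul\alpha,\beta)\uloplus\varphi_\#(\alpha,\ul\beta)$ (Lemma~\ref{lem:indmaj}), whence the base case $\varphi(\kappa,\kappa)=\kappa$, and an induction on the decomposability of the quotients does the rest. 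Your partition does appear in the paper, but only to peel off the remainders (Lemma~\ref{lem:major-sum:bis}), not to control the leading term.

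The optimality half has two gaps. First, the product poset $\prod_i q_i$ cannot be realized as an intersection of $n$ well orders of types $q_i$ and only then blown up by $\kappa$: for $q_1=q_2=\omega$ the intersection of two orders of type $\omega$ on a countable set has length $\omega$ (Corollary~\ref{cor:length-sierp}), never $\omega^2=\omega\otimes\omega$. The blowup must come first: the paper works inside $(\bfk\multordby\bfa)\times(\bfk\multordby\bfb)$ and restricts to the graph of a ``mixing'' bijection (each fiber $R_a^b$ of size $\kappa$), which is linearly ordered of types $\kappa\multordby\alpha$ and $\kappa\multordby\beta$ by the two lexicographic orders yet extends to type $\kappa\multordby(\alpha\times\beta)$. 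Second, a tail whose $i$-th order type is exactly $r_i$ yields at most $\varphi^+_L(r_1,\dots,r_n)$ extra length, which is in general much smaller than $|\sum_i r_i|^+$ (for $r_1=r_2=\omega$ it gives at most $\omega$, not $\omega_1$); moreover the $r_i$ need not be equipotent, so no common tail set even exists. The paper instead pairs each remainder $r_\kappa(\alpha)$ with an \emph{arbitrary} equipotent ordinal $\gamma$ inserted at the bottom of the other factor (absorbed there since $\kappa\multordby q$ absorbs smaller ordinals on the left), arranges the blocks in opposite orders in the two chains so that the intersection becomes a direct sum, and lets $\gamma$ range over all ordinals of cardinality $|r_\kappa(\alpha)|$ to make the resulting lengths cofinal in $|r_\kappa(\alpha)+r_\kappa(\beta)|^+$ (Lemma~\ref{lem:decompinver} and Proposition~\ref{prop:minor-longdim2}).
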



In particular~:

\begin{prop} If an ordering $\leq$ on a  set $X$ of cardinality $\kappa$ is the  intersection of $n$ orderings $\leq_i$, $1\leq i\leq n$, then the length of $\mathbf p:= (X, \leq)$ is at most  
$\kappa \multordby(\otimes_{1\leq i\leq n} q_i)$ provided that each  corresponding poset  $\mathbf p_i$ have length at most $\kappa\multordby q_i$.  This bound is attained for a family of $n$ linear orderings of types $\kappa\multordby q_i$. 
\end{prop}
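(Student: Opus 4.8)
The plan is to deduce the Proposition from Theorem~\ref{theo:main} by a short ordinal computation. Write $\alpha_i := \ell(\mathbf p_i)$ (well defined since each $\mathbf p_i$ is WPO). By the de Jongh--Parikh theorem the length of $\mathbf p_i$ is attained by some linear extension of $\leq_i$, a well order on $X$, so $|\alpha_i| = |X| = \kappa$; the $\alpha_i$ are thus equipotent and Theorem~\ref{theo:main} applies to $\leq = \bigcap_{1\le i\le n}\leq_i$. Let $\alpha_i = \kappa\multordby\bar q_i + \bar r_i$, $\bar r_i < \kappa$, be the euclidean division. The hypothesis $\alpha_i\le\kappa\multordby q_i$ forces $\bar q_i\le q_i$, and it forces $\bar r_i = 0$ whenever $\bar q_i = q_i$; equivalently, $\bar r_i > 0 \Rightarrow \bar q_i < q_i$. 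Theorem~\ref{theo:main} then gives
\[
\ell(\mathbf p) < \kappa\multordby\bigotimes_{1\le i\le n}\bar q_i + \big|\sum_{1\le i\le n}\bar r_i\big|^+.
\]

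Next I would verify that the right-hand side is at most $\kappa\multordby\bigotimes_i q_i + 1$, which yields $\ell(\mathbf p)\le\kappa\multordby\bigotimes_i q_i$, as wanted. Since $\kappa$ is a cardinal and the sum is finite, $\sum_i\bar r_i < \kappa$, so $\big|\sum_i\bar r_i\big|^+\le\kappa$. If every $\bar r_i = 0$ the right-hand side is $\kappa\multordby\bigotimes_i\bar q_i + 1$, and $\bigotimes_i\bar q_i\le\bigotimes_i q_i$ by monotonicity of the natural product. Otherwise $\bar r_j > 0$ for some $j$, so $\bar q_j < q_j$; as each $q_i\ge 1$ (because $\ell(\mathbf p_i)\ge|X|=\kappa$), strict monotonicity of $\otimes$ gives $\bigotimes_i\bar q_i \le \bar q_j\otimes\bigotimes_{i\ne j}q_i < q_j\otimes\bigotimes_{i\ne j}q_i = \bigotimes_i q_i$, whence $\bigotimes_i\bar q_i + 1\le\bigotimes_i q_i$ and therefore $\kappa\multordby\bigotimes_i\bar q_i + \kappa = \kappa\multordby(\bigotimes_i\bar q_i + 1)\le\kappa\multordby\bigotimes_i q_i$; a fortiori the right-hand side, being $\le\kappa\multordby\bigotimes_i\bar q_i + \kappa$, is $\le\kappa\multordby\bigotimes_i q_i$. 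This proves the first assertion.

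For the attainment I would apply the optimality part of Theorem~\ref{theo:main} with $\alpha_i := \kappa\multordby q_i$ (so $\bar q_i = q_i$, $\bar r_i = 0$, hence $\big|\sum\bar r_i\big|^+ = 1$): the quantity $\kappa\multordby\bigotimes_i q_i + 1$ is the \emph{least} strict upper bound of $\ell(\mathbf p)$ as $\leq$ runs over the intersections of $n$ \emph{linear} orderings $\leq_i$, each of type $\kappa\multordby q_i$. In particular $\kappa\multordby\bigotimes_i q_i$ is not a strict upper bound, so some such intersection $\mathbf p$ satisfies $\ell(\mathbf p)\ge\kappa\multordby\bigotimes_i q_i$; combined with the first part, $\ell(\mathbf p) = \kappa\multordby\bigotimes_i q_i$, so the bound is attained by a family of $n$ linear orderings of types $\kappa\multordby q_i$.

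Granting Theorem~\ref{theo:main}, the only real content is the ordinal inequality of the second paragraph; the mild subtlety there is that the hypothesis reads ``$\ell(\mathbf p_i)\le\kappa\multordby q_i$'' rather than ``$=$'', which is what forces the case distinction and the appeal to strict monotonicity of the Hessenberg product. (Under equality the bound $\kappa\multordby\bigotimes_i q_i$ is immediate from Theorem~\ref{theo:main}, as then $|0|^+ = 1$.) I do not expect any genuine obstacle.
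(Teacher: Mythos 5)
Your derivation is correct and is exactly the route the paper intends: the Proposition is stated as an immediate consequence of Theorem~\ref{theo:main} (``In particular:''), and you have simply supplied the routine ordinal arithmetic --- the case split on whether some remainder $\bar r_j$ is nonzero, using strict monotonicity of $\otimes$ and $\kappa\multordby(\gamma+1)=\kappa\multordby\gamma+\kappa$ --- together with the optimality clause of the theorem for attainment. No gaps; the only implicit assumption ($\kappa$ infinite, so that $\sum_i\bar r_i<\kappa$) matches the paper's standing hypothesis.
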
 

As a consequence~:

\begin{cor}\label{cor:length-sierp}
The length of a WPO $\mathbf p$ is an initial ordinal  $\kappa$ whenever the ordering  of $\mathbf p$ is the intersection of finitely many orderings  of length $\kappa$. 
\end{cor}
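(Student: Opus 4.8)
The plan is to read off Corollary~\ref{cor:length-sierp} from the Proposition above (equivalently, from Theorem~\ref{theo:main}), by specialising to the situation in which every constituent ordering has length exactly $\kappa$. So write $\mathbf p=(X,\leq)$ with $\leq=\bigcap_{1\leq i\leq n}\leq_i$, each $\mathbf p_i:=(X,\leq_i)$ a WPO with $\ell(\mathbf p_i)=\kappa$, and $\kappa$ an initial ordinal. The only arithmetic involved is the observation that the euclidian division of $\kappa$ by $\kappa$ is $\kappa=\kappa\multordby 1+0$, so one may take $q_i=1$ and $r_i=0$ for every $i$, whence $\bigotimes_{1\leq i\leq n}q_i=1$ and $\sum_{1\leq i\leq n}r_i=0$.

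First I would pin down $|X|$. If $\kappa$ is finite, each $\mathbf p_i$ is a finite poset, whose length equals its number of vertices, so $|X|=\kappa$; then $\mathbf p$ is finite too and $\ell(\mathbf p)=|X|=\kappa$, which disposes of this case. If $\kappa$ is infinite, then by the de Jongh--Parikh theorem the length $\kappa$ of $\mathbf p_1$ is attained, \ie\ there is a linear extension of $\leq_1$ of order type exactly $\kappa$; this provides a bijection $X\to\kappa$, so $|X|=|\kappa|=\kappa$.

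Next, the upper bound. The poset $\mathbf p$ embeds, via the identity map, into the product $\prod_{1\leq i\leq n}\mathbf p_i$, which is WPO by Higman's theorem, so $\mathbf p$ is WPO and $\ell(\mathbf p)$ is defined. Since $|X|=\kappa$, $\leq=\bigcap_i\leq_i$, and each $\mathbf p_i$ has length $\kappa=\kappa\multordby 1$, the Proposition (with $q_i=1$) gives
\[
\ell(\mathbf p)\ \leq\ \kappa\multordby\bigotimes_{1\leq i\leq n}q_i\ =\ \kappa\multordby 1\ =\ \kappa .
\]
For the reverse inequality, note that every linear extension of $\leq$ is a well-order on $X$ (Wolk's characterisation), hence an ordinal of cardinality $\kappa$, hence $\geq\kappa$ since $\kappa$ is initial; as such a linear extension exists, $\ell(\mathbf p)\geq\kappa$. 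Combining the two bounds, $\ell(\mathbf p)=\kappa$.

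There is essentially no obstacle once Theorem~\ref{theo:main}, or the Proposition distilled from it, is in hand: the whole content is the trivial euclidian division ($q_i=1$, $r_i=0$, $\bigotimes q_i=1$), the identification $|X|=\kappa$ together with the degenerate finite case, and the elementary general fact that $\ell(\mathbf q)\geq|Y|$ for any WPO $\mathbf q=(Y,\leq)$. The one genuine ingredient is the upper bound $\ell(\mathbf p)\leq\kappa$, and this is precisely where the strength of the Theorem is used, since the crude product bound $\bigotimes_{1\leq i\leq n}\ell(\mathbf p_i)=\kappa\otimes\cdots\otimes\kappa$ is in general far larger than $\kappa$.
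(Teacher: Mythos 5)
Your proof is correct and follows essentially the same route as the paper: specialise the Theorem/Proposition to $q_i=1$, $r_i=0$ to get $\ell(\mathbf p)\leq\kappa$, and combine with the trivial lower bound $\ell(\mathbf p)\geq|X|=\kappa$. The only difference is that you spell out the identification $|X|=\kappa$ and the degenerate finite case, which the paper leaves implicit.
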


Indeed, the length of a poset $\mathbf p$ is trivially at least its cardinality (viewed as an initial ordinal), hence $\ell (\mathbf p)\geq\kappa$. Since in  the theorem above, $q_i= 1$ and $r_i=0$, $\kappa+1$ is a strict upper bound of the length of $\mathbf p$. Hence the length of $\mathbf p$ is $\kappa$. 

As another  consequence~:
 
\begin{cor} \label{cor:beta-omega}
Let $\mathbf p$ be  a poset and $\alpha$ be a countably infinite ordinal. Then $\ell (\mathbf p)=\alpha$ provided that the ordering is the intersection of an ordering of length $\omega$ and an ordering of length $\alpha$.
\end{cor}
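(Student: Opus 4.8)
The plan is to read the corollary off Theorem~\ref{theo:main} with $n=2$, supplemented by a trivial lower bound. Write the given ordering $\leq$ as an intersection $\leq_1\cap\leq_2$, where $(X,\leq_1)$ is a WPO of length $\omega$ and $(X,\leq_2)$ is a WPO of length $\alpha$. Since the length of a poset is at least its cardinality and $\ell(X,\leq_1)=\omega$, the set $X$ is countably infinite; as $\alpha$ is countably infinite as well, the two lengths are equipotent of cardinality $\kappa=\omega$, so Theorem~\ref{theo:main} applies with this $\kappa$. Moreover $(X,\leq)$ embeds into $(X,\leq_1)\times(X,\leq_2)$ through the diagonal map, hence is WPO, so $\ell(\mathbf p)$ is defined.

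For the lower bound I would use that $\leq\subseteq\leq_2$: every linear extension of $\leq_2$ is then a linear extension of $\leq$, so taking the supremum of the order types of linear extensions gives $\ell(\mathbf p)\geq\ell(X,\leq_2)=\alpha$. For the upper bound I would carry out the two Euclidean divisions by $\kappa=\omega$ that the theorem requires: $\omega=\omega\multordby 1+0$, so $q_1=1$ and $r_1=0$; and $\alpha=\omega\multordby q_2+r_2$ with $r_2<\omega$, so $r_2$ is a natural number. Then $\bigotimes_{1\leq i\leq 2}q_i=1\otimes q_2=q_2$, while $r_1+r_2=r_2$, and since $r_2$ is finite $|r_1+r_2|^+=|r_2|^+=r_2+1$ --- just as $|0|^+=1$ in the proof of Corollary~\ref{cor:length-sierp}. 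Hence Theorem~\ref{theo:main} gives the strict upper bound $\kappa\multordby\bigotimes_{1\leq i\leq 2}q_i+|r_1+r_2|^+=\omega\multordby q_2+(r_2+1)=\alpha+1$, that is $\ell(\mathbf p)<\alpha+1$, so $\ell(\mathbf p)\leq\alpha$. With the lower bound this yields $\ell(\mathbf p)=\alpha$.

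There is no genuine obstacle here, the statement being a specialization of Theorem~\ref{theo:main}; the only point calling for care is the arithmetic of the correction term. The key observation is that dividing a countably infinite ordinal by $\omega$ always leaves a \emph{finite} remainder, so that $|r_1+r_2|^+$ collapses to $(r_1+r_2)+1$ rather than jumping up to $\omega$. This is exactly what makes the strict upper bound equal $\alpha+1$ --- and hence forces $\ell(\mathbf p)\leq\alpha$ --- rather than the weaker $\alpha+\omega$ that a cardinality-only reading of the correction term would suggest.
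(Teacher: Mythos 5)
Your proof is correct and is essentially the derivation the paper intends: specialize Theorem~\ref{theo:main} to $n=2$, $\kappa=\omega$, $q_1=1$, $r_1=0$ and a finite remainder $r_2$, so the strict upper bound collapses to $\alpha+1$, while the lower bound $\ell(\mathbf p)\geq\alpha$ comes from $\leq_2$ being an edge-extension of $\leq$ (the paper's inequality~\eqref{eq:restriction}, or item~\eqref{lem:basic-longdim2:obviousminor} of Lemma~\ref{lem:basic-longdim2}). Your closing remark correctly identifies the one point of substance, namely that the remainder of a countable ordinal divided by $\omega$ is finite, which is exactly why the statement fails with $\omega_1$ in place of $\omega$, as the paper notes.
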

 
Note that, according to Theorem~\ref{theo:main}, this result does not hold with $\omega_1$ in place of $\omega$ and $\alpha= \omega_1+\delta$ with $\omega\leq\delta<\omega_1$. 
Indeed, for each $\beta$ satisfying $\alpha\leq\beta\leq\omega_1$,
there a poset of length $\beta$ 
which is the  intersection of a linear order of type $\omega_1$ and a linear order of type $\alpha$. 

Theorem~\ref{theo:main} answers a question raised by Forster in~\cite{forster} in terms of the height of the tree of bad sequences of a poset $\mathbf p$ (see 
page 46). 
We recall that a sequence $x_0$, $x_1$, $\dots $ (finite or not ) of vertices of a poset $\mathbf p$ is called \emph{good}
if there are indices $i,j$ such that $i<j$ and $x_i\leq x_j$ and is called \emph{bad} otherwise. The set of all bad sequences of  $\mathbf p$ is denoted by $Bad (\mathbf p)$. If we compare sequences by extension, this set becomes 
 a tree with root $r$ the empty sequence. As it is easy to see, this tree has no infinite chain if and only if $\mathbf p$ is WPO, hence in this case $r$ has an height in $Bad(\mathbf p)$. It turns out that this height is equal to $\ell(\mathbf p)$ (\cite{kriz}). 
 
Instead of the length of orderings which are intersections of orderings, the height of orderings which are unions of non  necessarily transitive relations has been computed and an analogous formula obtained, see~\cite{blass-gurevich,delhomme,delhomme-treeaut}.

Corollary~\ref{cor:beta-omega} was conjectured by Forster~\cite{forster}~page 47. 
 
\subsection{Organisation of the paper}\label{section:organisation}
In the preliminary Section~\ref{section:prelim}, 
we fix our notation on ordinals and WPO and we recall basic facts on those.
The proof of Theorem~\ref{theo:main} is handled 
in Section~\ref{section:proof}.
We first introduce specific notation regarding our particular purpose,
and we restate our main result in this framework. 
We reduce this proof to the case $n=2$ (Section~\ref{section:reduction}),
and then the proof of this case $n=2$ is split into a minoration in Section~\ref{section:minoration},
and a majoration in Section~\ref{section:majoration}.
\invis{
See the end of Section~\ref{section:strategy} for a precise formulation.
}


Let us close this introductory section with some consequences of Corollary~\ref{cor:length-sierp} above, that were our initial motivation for the present work.
 
\subsection{Sierpinskisation}\label{section:sierp}

Let $\alpha$ be a countable order type and $\omega$ be the order type of non-negative integers.  A \emph{sierpinskisation} of $\alpha$ and $\omega$, or simply of $\alpha$,  is any poset $\mathbf s:=(X, \leq)$ such that the ordering on $X$ is the intersection of two linear orderings on $X$, one of type $\alpha$,  the other of type $\omega$. Such a  sierpinskisation can   be obtained from a bijective map $\varphi:\omega \rightarrow \alpha$, setting $X:=\N$ and $x\leq y$ if $x\leq y$ w.r.t. the natural ordering on  $\N$ and  $\varphi(x)\leq \varphi(y)$ w.r.t. the ordering of type  $\alpha$.

A consequence of  Corollary \ref{cor:beta-omega} is this:  

\begin{lem}\label{lem:sierp1} If $\alpha$ and $\alpha'$ are two countable ordinals with $\alpha<\alpha'$ then  no  sierpinskization of $\alpha'$ can be embedded in any sierpinskization of $\alpha$. 
\end{lem}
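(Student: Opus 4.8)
The plan is to read the statement off from Corollary~\ref{cor:beta-omega}, using that the length is monotone under embeddings. First I would observe that a sierpinskisation of an ordinal $\beta$ can exist only when $\beta$ is countably infinite: the two linear orderings in the definition share the same vertex set $X$ and one of them has type $\omega$, so $X$, and hence $\beta$, is countably infinite. In particular $\alpha$ and $\alpha'$ are both countably infinite; the remaining cases are vacuous, since there is then no sierpinskisation of $\alpha$ at all.

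The single auxiliary ingredient is the monotonicity of $\ell$ under embeddability: if $\mathbf q$ embeds into a WPO $\mathbf p$ then $\mathbf q$ is WPO and $\ell(\mathbf q)\leq\ell(\mathbf p)$. This is one of the basic facts on WPO recalled in Section~\ref{section:prelim}; for completeness, the reason is that, identifying $\mathbf q$ with the induced subposet on a subset $Y$ of $\mathbf p=(X,\leq)$ and choosing (by de Jongh-Parikh) a linear extension $L$ of $\mathbf q$ of type $\ell(\mathbf q)$, the transitive closure of $\leq\,\cup\,L$ is still a partial order extending $\leq$ --- any cycle would collapse, by transitivity of $\leq$, to a strict cycle inside the linear order $L$ --- so it extends to a linear extension $L'$ of $\mathbf p$ whose restriction to $Y$ equals $L$; since $\mathbf p$ is WPO, $L'$ is a well order of type at least $\ell(\mathbf q)$.

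Granting this, suppose for contradiction that some sierpinskisation $\mathbf s'$ of $\alpha'$ embeds into some sierpinskisation $\mathbf s$ of $\alpha$. By definition the ordering of $\mathbf s$ is the intersection of a linear ordering of type $\alpha$ and a linear ordering of type $\omega$, that is, of an ordering of length $\alpha$ and an ordering of length $\omega$; as $\alpha$ is countably infinite, Corollary~\ref{cor:beta-omega} gives $\ell(\mathbf s)=\alpha$, and likewise $\ell(\mathbf s')=\alpha'$. Monotonicity of the length then forces $\alpha'=\ell(\mathbf s')\leq\ell(\mathbf s)=\alpha$, contradicting $\alpha<\alpha'$; hence no such embedding exists.

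I do not expect any real obstacle: the proof is simply the combination of Corollary~\ref{cor:beta-omega} with the monotonicity of $\ell$ under embeddings, the latter being part of the standard WPO material set up in the preliminary section. If one wished to make the note self-contained, the only point to spell out carefully would be the monotonicity of $\ell$ and its short proof via the transitive closure of $\leq\,\cup\,L$.
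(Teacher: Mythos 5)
Your proof is correct and follows essentially the same route as the paper's: apply Corollary~\ref{cor:beta-omega} to get $\ell(\mathbf s)=\alpha$ and $\ell(\mathbf s')=\alpha'$, then conclude $\alpha'\leq\alpha$ by monotonicity of the length under embedding, which is exactly the inequality~\eqref{eq:restriction} the paper cites. Your extra remarks (the vacuous finite cases and the justification of monotonicity via extending a linear extension) are sound but not needed beyond what the preliminaries already provide.
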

\begin{proof}If  a sierpinskization $\mathbf s'$ of $\alpha'$ is embeddable into a sierpinskization $\mathbf s$ of $\alpha$ then   by Corollary~\ref{cor:beta-omega} and (\ref{eq:restriction}) of Section~%
\ref{section:WPO}, 
$\alpha'=\ell(\mathbf s') \leq \ell (\mathbf s)=\alpha$. 
\end{proof}
 
 In contrast, we recall the following result (Lemma 3.4.1 
of \cite{pouz-zagu84}).
\begin{lem}\label{lem:sierp2}
For every countable order type $\alpha$, if $\mathbf s$ and $\mathbf s'$ are  a sierpinskizations of $\alpha$ and $\omega.\alpha$ respectively, then $\mathbf s$ is embeddable into $\mathbf s'$. \end{lem}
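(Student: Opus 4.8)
The plan is to produce an explicit embedding. Fix a countable order type $\alpha$ and let $\mathbf{s}' = (Y, \preceq')$ be a sierpinskisation of $\omega\cdot\alpha$: so $Y$ carries two linear orders, one of type $\omega$ (call it $\leq_Y$) and one of type $\omega\cdot\alpha$ (call it $\sqsubseteq_Y$), and $\preceq'$ is their intersection. I would begin by exploiting the structure of $\omega\cdot\alpha$ as an $\alpha$-indexed sum of copies of $\omega$: write $\sqsubseteq_Y$ as partitioning $Y$ into successive blocks $B_\xi$, $\xi < \alpha$, each of $\sqsubseteq_Y$-order type $\omega$, with the blocks ordered by $\alpha$. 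The key observation to record first is that, since $\leq_Y$ has type $\omega$ (every element has only finitely many $\leq_Y$-predecessors) while each block $B_\xi$ is $\sqsubseteq_Y$-infinite, each block $B_\xi$ contains an infinite $\leq_Y$-increasing subsequence; more usefully, one can select one element $y_\xi \in B_\xi$ for each $\xi < \alpha$ in such a way that the chosen representatives are $\leq_Y$-increasing in $\xi$ — this is possible because the blocks are cofinal-free enough: formally, build the $y_\xi$ by transfinite recursion, at stage $\xi$ choosing $y_\xi \in B_\xi$ with $y_\xi >_Y y_\eta$ for all $\eta < \xi$ already chosen, which succeeds because $B_\xi$ is $\leq_Y$-unbounded above in $Y$ (any $\sqsubseteq_Y$-block appearing later than a given finite $\leq_Y$-initial segment must contain elements $\leq_Y$-above it, as that initial segment is finite).

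Next I would use this to embed $\mathbf{s}$. Let $\mathbf{s} = (X, \preceq)$ be a sierpinskisation of $\alpha$, with a linear order $\leq_X$ of type $\omega$ and a linear order $\sqsubseteq_X$ of type $\alpha$, and $\preceq = {\leq_X} \cap {\sqsubseteq_X}$. Enumerate $X = \{x_0, x_1, x_2, \dots\}$ in $\leq_X$-order. I define the embedding $f : X \to Y$ recursively following this enumeration: having placed $x_0, \dots, x_{m-1}$, place $x_m$ into the block $B_{\xi(m)}$ where $\xi(m) < \alpha$ is the $\sqsubseteq_X$-rank of $x_m$ among $x_0,\dots,x_m$ — more precisely, pick $f(x_m) \in B_{\xi}$ for the appropriate $\xi$, and within that block choose $f(x_m)$ to be $\leq_Y$-larger than all of $f(x_0), \dots, f(x_{m-1})$. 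This last choice is again available because each block is $\leq_Y$-unbounded above any finite set. One then checks that $f$ is an embedding of $\preceq$ into $\preceq'$: if $x_i \prec x_j$ in $\mathbf{s}$ then $i < j$ in the $\leq_X$-enumeration (so $f(x_i) <_Y f(x_j)$ by construction) and $x_i \sqsubset_X x_j$; the latter has to be arranged to force $f(x_i) \sqsubset_Y f(x_j)$, which requires the block assignment $\xi(\cdot)$ to be $\sqsubseteq_X$-order-preserving into $\alpha$, and conversely non-comparabilities must be preserved as well.

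The cleanest way to organise the block assignment is to not do it recursively at all but to fix, at the outset, a map $g : (X, \sqsubseteq_X) \to \alpha$ that is a strict order-embedding of the type-$\alpha$ order into $\alpha$ with the property that $g^{-1}(\xi)$ is finite for every $\xi$ — for instance $g$ could just be the identity under the canonical identification of $(X,\sqsubseteq_X)$ with $\alpha$, so each fiber is a singleton, which is certainly finite. Then route $x$ through block $B_{g(x)}$, and use the $\leq_Y$-unboundedness of each block to choose the within-block representatives compatibly with the $\leq_X$-order on $X$ as above. Comparability is then immediate in both directions: $x \preceq x'$ iff ($x \leq_X x'$ and $x \sqsubseteq_X x'$) iff ($f(x) \leq_Y f(x')$ and $f(x) \sqsubseteq_Y f(x')$) iff $f(x) \preceq' f(x')$, where the middle equivalence uses that $f$ is $\leq_X$-to-$\leq_Y$ increasing and that $x \sqsubseteq_X x' \Leftrightarrow g(x) \le g(x') \Leftrightarrow f(x) \sqsubseteq_Y f(x')$ (with equality of $g$-values still forcing the correct $\sqsubseteq_Y$-comparison because within a single block $f$ is $\leq_Y$-increasing, hence $\sqsubseteq_Y$-increasing is not automatic — so one must also choose within-block representatives to respect $\sqsubseteq_Y$, but since within a block $\sqsubseteq_Y$ has type $\omega$ this is just another "pick something large enough" step).

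I expect the main obstacle to be the bookkeeping in the simultaneous recursion: we are making within-block choices that must respect \emph{two} different linear orders on $Y$ at once (the global $\leq_Y$ of type $\omega$, and the in-block $\sqsubseteq_Y$ of type $\omega$), while the driving enumeration of $X$ is by $\leq_X$ and the block-routing is by $\sqsubseteq_X$. The reason it goes through is the uniform fact that a $\sqsubseteq_Y$-block, being $\sqsubseteq_Y$-infinite of type $\omega$ while $\leq_Y$ is also of type $\omega$, is unbounded above in $\leq_Y$ — so at every finite stage both "large enough" demands can be met simultaneously (intersect two $\leq_Y$-cofinite, $\sqsubseteq_Y$-cofinite subsets of the block). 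Making this "both at once" selection precise, and verifying that non-comparabilities of $\mathbf{s}$ map to non-comparabilities of $\mathbf{s}'$, is the only real content; everything else is routine transfinite-recursion housekeeping.
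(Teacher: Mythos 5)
The paper does not actually prove this lemma; it is recalled verbatim from Lemma~3.4.1 of Pouzet--Zaguia \cite{pouz-zagu84}, so there is no in-text proof to compare against. Judged on its own, your final construction is correct and is essentially the standard argument: decompose the type-$\omega\cdot\alpha$ order on $Y$ into $\alpha$-many convex blocks $B_\xi$ of type $\omega$; observe that each block, being infinite, is cofinal in the type-$\omega$ order $\leq_Y$; fix an order-isomorphism $g$ from $(X,\sqsubseteq_X)$ onto the block index set; and then choose $f(x_m)\in B_{g(x_m)}$ greedily along the $\leq_X$-enumeration of $X$ so that $f(x_m)>_Y f(x_i)$ for $i<m$. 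Since $g$ is injective, distinct points of $X$ land in distinct blocks, so $f$ preserves and reflects both linear orders and hence their intersection; your worry about ordering images \emph{within} a block by $\sqsubseteq_Y$ is vacuous here, and incomparabilities are automatically reflected once both biconditionals hold.

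One claim in your first paragraph is genuinely false and should be deleted rather than left as motivation: you assert that one can choose representatives $y_\xi\in B_\xi$ that are $\leq_Y$-increasing \emph{in $\xi$} for all $\xi<\alpha$. This is impossible as soon as some element of $\alpha$ has infinitely many predecessors (e.g.\ $\alpha=\omega+1$): an infinite strictly $\leq_Y$-increasing family is cofinal in a type-$\omega$ order, so nothing lies above it. Your actual construction does not use this claim --- it only ever requires an element of a block above a \emph{finite} set, and the monotonicity it imposes on $f$ is along the $\leq_X$-enumeration (of type $\omega$), not along the block index $\xi$ --- so the proof survives, but as written the warm-up step would not.
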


As  a special case,  if $\alpha$ and $\omega.\alpha$ are equimorphic then  any two sierpinskizations of $\alpha$ are equimorphic. If $\alpha$ is an ordinal, the equimorphy of $\alpha$ and $\omega.\alpha$ amounts to  $\omega.ind(\alpha)=\alpha$ (where $ind(\gamma):= 0$ if  $\gamma = 0$ and otherwise $ind(\gamma ) := \delta$, 
where $\delta$ is the least non-zero ordinal such that $\gamma=\gamma' + \delta$ for some $\gamma'$). If $\beta\geq \omega$ then $\alpha:= \omega^\beta$ satisfies this condition. Hence:

\begin{cor} \label{cor:omega1}The $\omega_1$-sequence $(\mathbf s_{\omega^\beta})_{\omega\leq \beta<\omega_1}$ of sierpinskizations of $\omega^\beta$ is strictly increasing w.r.t. embeddability. 
\end{cor}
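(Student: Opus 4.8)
The plan is to obtain the statement by assembling Lemmas~\ref{lem:sierp1} and~\ref{lem:sierp2} with the discussion preceding the corollary, splitting ``strictly increasing'' into a monotonicity assertion and a strictness assertion.

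First I would make sense of the notation. For $\omega\le\beta<\omega_1$ one has $\omega\cdot\omega^\beta=\omega^{1+\beta}=\omega^\beta$, so $\omega^\beta$ and $\omega\cdot\omega^\beta$ coincide; hence, by the remark following Lemma~\ref{lem:sierp2}, any two sierpinskizations of $\omega^\beta$ are equimorphic. Thus $\mathbf{s}_{\omega^\beta}$ is well defined up to equimorphy, and in particular up to mutual embeddability, which is all that matters for the conclusion.

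The core step is monotonicity: for $\omega\le\beta<\beta'<\omega_1$ the poset $\mathbf{s}_{\omega^\beta}$ embeds into $\mathbf{s}_{\omega^{\beta'}}$. Since $\omega^\beta<\omega^{\beta'}$, the ordinal $\omega^\beta$ is a proper initial segment of $\omega^{\beta'}$. I would fix a bijection $\psi\colon\omega\to\omega^{\beta'}$ defining a sierpinskization $\mathbf{s}'=(\N,\preceq')$ of $\omega^{\beta'}$ and put $A:=\psi^{-1}[\omega^\beta]$. As $\omega^\beta$ is infinite, $A$ is an infinite subset of $\N$, hence of order type $\omega$ for the usual ordering; and for $x,y\in A$ the relation $\psi(x)\le\psi(y)$ in $\omega^{\beta'}$ is the same as $\psi(x)\le\psi(y)$ in $\omega^\beta$, because an initial segment inherits its ordering. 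Therefore the suborder induced by $\mathbf{s}'$ on $A$ is (isomorphic to) a sierpinskization of $\omega^\beta$; by the previous paragraph $\mathbf{s}_{\omega^\beta}$ is equimorphic to it, and since this suborder sits inside $\mathbf{s}'$ we conclude $\mathbf{s}_{\omega^\beta}\hookrightarrow\mathbf{s}'$, that is, $\mathbf{s}_{\omega^\beta}\hookrightarrow\mathbf{s}_{\omega^{\beta'}}$.

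The strictness is then immediate from Lemma~\ref{lem:sierp1}: as $\omega^\beta<\omega^{\beta'}$ are countable ordinals, no sierpinskization of $\omega^{\beta'}$ embeds into a sierpinskization of $\omega^\beta$, so in particular $\mathbf{s}_{\omega^{\beta'}}\not\hookrightarrow\mathbf{s}_{\omega^\beta}$; combined with the monotonicity this gives that $(\mathbf{s}_{\omega^\beta})_{\omega\le\beta<\omega_1}$ is strictly increasing for embeddability. I expect the only genuinely non-mechanical point to be the monotonicity step, and within it the verification that restricting a sierpinskization of $\omega^{\beta'}$ to the preimage of the initial segment $\omega^\beta$ really yields a sierpinskization of $\omega^\beta$ (using that infinite subsets of $\N$ have order type $\omega$ and that initial segments inherit the ordering); once that is in place, the well-definedness up to equimorphy does the rest.
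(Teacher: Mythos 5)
Your proof is correct and follows essentially the route the paper intends: strictness comes from Lemma~\ref{lem:sierp1}, and monotonicity from restricting a sierpinskization of $\omega^{\beta'}$ to the preimage of the initial segment $\omega^\beta$ and invoking the equimorphy of all sierpinskizations of $\omega^\beta$ (a consequence of Lemma~\ref{lem:sierp2} together with $\omega\cdot\omega^\beta=\omega^{1+\beta}=\omega^\beta$ for $\beta\geq\omega$). This is precisely the mechanism the paper uses, so nothing further is needed.
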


To a poset $\mathbf p:= (X, \leq)$, there are two natural posets associated with $\mathbf p$, namely, the poset $I(\mathbf p)$ of  initial segments  of $\mathbf p$ and the poset $I_{<\omega}(\mathbf p)$ of finitely generated initial segments of $\mathbf p$, that is the finite unions of  principal initial segment of $\mathbf p$, sets of the form $\downarrow x:= \{y\in X: y\leq x\}$ for $x\in X$.   If $\mathbf {s}:= (X, \leq)$ is  a sierpinskization of  $\alpha$ then each principal initial segment of $\mathbf s$ is finite, hence $I_{<\omega}(\mathbf s)$  is a distributive lattice which is embeddable as a sublattice into $[\omega ]^{<\omega}$, the lattice of finite subsets of $\omega$, ordered by inclusion. If $\alpha$ is not an ordinal then $\mathbf s$ contains an infinite antichain. This implies that   $[\omega]^{<\omega}$ is also embeddable in $I_{<\omega}(\mathbf s)$ as a join-semilattice. For ordinals the situation is different: since $\mathbf s$ is WPO, $I_{<\omega}(\mathbf s)$ is WPO via  Higman's result \cite{higman52} on finite sequences, hence   $[\omega]^{<\omega}$ is not embeddable in $I_{<\omega}(\mathbf s)$.  
 
Similarly to Corollary  \ref{cor:omega1},  the  lattices $I_{<\omega}(\mathbf s_{\omega^\beta})$ form a $\omega_1$-chain w.r.t. join-semilattices embeddings and no member of this chain is embeddable in a previous member w.r.t. posets  embeddings.  Indeed:
 
 \begin{lem}
 Let $\alpha$, $\alpha'$ be two countably infinite  ordinals and $\mathbf s$,  $\mathbf s'$ be two sierpinskizations of $\alpha$ and $\alpha'$. 
If $\mathbf s$ is order-embeddable into $\mathbf s'$ then $I_{<\omega}(\mathbf s)$ is embeddable in  $I_{<\omega}(\mathbf s')$ as a join-semilattice, and this in turn implies that  $I(\mathbf {s})$ is embeddable in  $I (\mathbf s')$ as a join-semilattice.  If $I(\mathbf {s})$ is embeddable in  $I (\mathbf {s'})$ as a join-semilattice  then  $\alpha \leq\alpha'$. The converse of these implications holds if $\omega\cdot \alpha\leq \alpha'$. 
\end{lem}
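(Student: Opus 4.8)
Let $\mathbf s=(X,\le)$ and $\mathbf s'=(X',\le')$ be the given sierpinskizations, of $\alpha$ and $\alpha'$ respectively. The plan is to establish the three forward implications of the lemma in turn, and then to deduce the converses from Lemma~\ref{lem:sierp2}. Throughout I shall use that a sierpinskization is a WPO, since $x\mapsto(x,x)$ embeds it into the product of its two witnessing linear orders, and that its principal initial segments are finite, so that $I_{<\omega}$ is the set of finite initial segments.

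\emph{The first two implications.} Given an order embedding $f\colon\mathbf s\to\mathbf s'$, I would send a finite initial segment $A$, with maximal elements $x_1,\dots,x_k$, to the initial segment $\widetilde f(A):={\downarrow}f(x_1)\cup\dots\cup{\downarrow}f(x_k)$ of $\mathbf s'$ generated by $f[A]$. This preserves finite unions, hence joins; and it is injective, for if $A\not\subseteq B$ one may pick a maximal element $x$ of $A$ lying outside $B$ (the maximal elements generate $A$), and then $f(x)\in\widetilde f(A)$, while $f(x)\le' f(b)$ for some $b\in B$ would give $x\le b$, hence $x\in B$, a contradiction. An injective join-homomorphism is automatically a join-semilattice embedding. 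To pass from $I_{<\omega}$ to $I$, I would invoke the canonical join-semilattice isomorphism $I(\mathbf p)\cong\mathrm{Idl}\,I_{<\omega}(\mathbf p)$, $A\mapsto\{F\in I_{<\omega}(\mathbf p):F\subseteq A\}$: a join-semilattice embedding $g\colon I_{<\omega}(\mathbf s)\to I_{<\omega}(\mathbf s')$ induces the map $\mathcal I\mapsto{\downarrow}g[\mathcal I]$ on ideals, which again preserves joins and is injective by the same one-line argument (if $F\in\mathcal I\setminus\mathcal J$, then $g(F)\le g(F')$ with $F'\in\mathcal J$ would force $F\le F'$, i.e.\ $F\in\mathcal J$), so transporting back gives $I(\mathbf s)\hookrightarrow I(\mathbf s')$ as join-semilattices.

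\emph{The third implication.} The crucial point is the identity
\[
\ell(\mathbf p)=\operatorname{ht}_{I(\mathbf p)}(X)\qquad\text{for every WPO }\mathbf p=(X,\le),
\]
the right-hand side being the height of the greatest element in the poset $I(\mathbf p)$, which is well founded whenever $\mathbf p$ is WPO (a strictly decreasing $\omega$-sequence of initial segments would give a bad sequence of $\mathbf p$) even though it may carry infinite antichains. The inequality $\ge$ is immediate, since the proper initial segments of a linear extension of $\mathbf p$ of type $\gamma$ form a chain of type $\gamma$ below $X$ in $I(\mathbf p)$. For $\le$, I would prove $\operatorname{ht}_{I(\mathbf p)}(A)\le\ell(\mathbf p{\restriction}A)$ by induction on $A$ along $\subsetneq$: for $B\subsetneq A$ and $z$ minimal in $A\setminus B$, stacking $z$ on top of a linear extension of $\mathbf p{\restriction}B$ yields $\ell(\mathbf p{\restriction}B)+1\le\ell(\mathbf p{\restriction}(B\cup\{z\}))$, and the latter is $\le\ell(\mathbf p{\restriction}A)$ by monotonicity of the length~\eqref{eq:restriction}, so that $\operatorname{ht}_{I(\mathbf p)}(A)=\sup_{B\subsetneq A}\bigl(\operatorname{ht}_{I(\mathbf p)}(B)+1\bigr)\le\sup_{B\subsetneq A}\bigl(\ell(\mathbf p{\restriction}B)+1\bigr)\le\ell(\mathbf p{\restriction}A)$. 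Granting the identity, a join-semilattice embedding $\phi\colon I(\mathbf s)\to I(\mathbf s')$ is an order embedding, hence does not decrease heights, and $\phi(X)\subseteq X'$; so, using $\ell(\mathbf s)=\alpha$ and $\ell(\mathbf s')=\alpha'$ from Corollary~\ref{cor:beta-omega},
\[
\alpha=\operatorname{ht}_{I(\mathbf s)}(X)\le\operatorname{ht}_{I(\mathbf s')}(\phi(X))\le\operatorname{ht}_{I(\mathbf s')}(X')=\alpha'.
\]

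\emph{The converses, and the main obstacle.} For these it is enough to note that $\omega\cdot\alpha\le\alpha'$ already forces the strongest of the four conditions, $\mathbf s\hookrightarrow\mathbf s'$, after which the three forward implications convey it to the others. Writing $\mathbf s'$ as the intersection of a linear order of type $\omega$ with one of type $\alpha'$, I would restrict to the initial segment $Y$ of the latter of type $\omega\cdot\alpha$: the type-$\omega$ order still induces type $\omega$ on the infinite set $Y$, so $\mathbf s'{\restriction}Y$ is a sierpinskization of $\omega\cdot\alpha$, and Lemma~\ref{lem:sierp2} provides $\mathbf s\hookrightarrow\mathbf s'{\restriction}Y\hookrightarrow\mathbf s'$. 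I expect the delicate step to be the identity $\ell(\mathbf p)=\operatorname{ht}_{I(\mathbf p)}(X)$ — more precisely the inequality $\operatorname{ht}_{I(\mathbf p)}(X)\le\ell(\mathbf p)$ together with the preliminary observation that $I(\mathbf p)$ is well founded but not in general a WPO, which is exactly what lets one argue on $I(\mathbf p)$ by heights rather than by lengths; the remaining ingredients are routine bookkeeping with join-semilattices and ideal completions.
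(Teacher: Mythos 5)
Your proof is correct and follows essentially the same route as the paper's: the same two join-semilattice embeddings (which the paper merely asserts and you construct explicitly), the comparison of $\alpha$ and $\alpha'$ via Corollary~\ref{cor:beta-omega} together with the fact that $\ell(\mathbf s)$ controls $I(\mathbf s)$, and the converse obtained by cutting the $\alpha'$-order of $\mathbf s'$ down to an induced sierpinskization of $\omega\cdot\alpha$ and applying Lemma~\ref{lem:sierp2}. The only cosmetic difference is that you phrase the third implication in terms of the height of the top element of the well-founded poset $I(\mathbf p)$, where the paper invokes the equivalent fact that the maximal order type of chains in $I(\mathbf p)$ is $\ell(\mathbf p)+1$.
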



\begin{proof} An order-embedding of $\mathbf s$ into $\mathbf s'$ yields a join-semilattice embedding of $I_{<\omega}(\mathbf s)$ into  $I_{<\omega}(\mathbf s')$. A join-semilattice embedding of $I_{<\omega}(\mathbf s)$ into  $I_{<\omega}(\mathbf s')$ yields too a  join-semilattice embedding of $I(\mathbf {s})$ into  $I (\mathbf s')$. According to Corollary \ref{cor:beta-omega},  $\ell(\mathbf s)=\alpha$, hence the maximum length of chains in $I(\mathbf s)$ is $\ell (\mathbf s)+1$, thus if  $I(\mathbf {s})$ is embeddable into   $I (\mathbf s')$ as a join-semilattice, it is embeddable as a poset and in this case $\ell (\mathbf s)+1\leq \ell (\mathbf s')+1$, thus $\alpha\leq \alpha'$. If  $\omega\cdot\alpha\leq \alpha'$,  then since $\mathbf s'$  induces a sierpinskization of $\omega\cdot\alpha$ and $\mathbf s$ embeds into this sierpinskization by Lemma  \ref{lem:sierp2},  it embeds into $\mathbf s'$. 
 \end{proof}

 Let  $\beta$ be an order type and  $\alpha:= \omega.\beta$. A  sierpinskization of $\alpha$ is called \emph{monotonic} if for every $\gamma\in \alpha$, the map $\varphi_{-1}$, once restricted to $\omega\times \{\gamma\}$ is monotonic. According to Lemma 3.4.3.  of   \cite{pouz-zagu84} two monotonic sierpinskization of $\alpha$ are equimorphic. Hence, we extend the conclusion of Corollary \ref{cor:omega1} to monotonic sierpinskizations of $\omega.\beta$ for every $\beta<\omega$ as well as to the distributive lattices they generate.
 
 For a detailed study of possible lengths of chains in algebraic lattices and sierpinskizations, see \cite{chakir} and \cite{chakir-pouzet1, chakir-pouzet2}. 

\acknolDIMACOS


\section{Preliminaries}\label{section:prelim}

\subsection{Ordinals and WPO. Notation and basic properties}\label{section:ordin-WPO}
See~\cite{fraissetr}.

\subsubsection{Ordinals}\label{section:ordin}

%
Given ordinals $\alpha$ and $\beta$~:
\begin{itemize}
  \item $|\alpha|$ denotes the cardinality of $\alpha$, considered as the least ordinal that is equipotent with $\alpha$.
  If $|\alpha|=\alpha$, then the ordinal $\alpha$ is {\em initial}.
  \item $\alpha^+$ denotes the least initial ordinal greater than $\alpha$ 
  (its so called Hartog).%
\footinvis{
The least ordinal greater than $\alpha$ failing to be equipotent to it.
}
\item If $\alpha\leq\beta$, then $(-\alpha)+\beta$ denotes the only ordinal $\gamma$ such that $\beta=\alpha+\gamma$.
Thus for a non zero ordinal $\alpha$, $(-1)+\alpha$ is equal to $n-1$ if $\alpha$ is an integer $n$, and it is equal to $\alpha$ itself if $\alpha$ is infinite.
  \item For each non-zero ordinal $\gamma$,
  $\alpha=\delta\multordby q_\delta(\alpha)+r_\delta(\alpha)$ denote the euclidian division of $\alpha$ by $\delta$,
  that is characterized by the remainder being less than $\delta$~: $r_\delta(\alpha)<\delta$.
\end{itemize}
%

$\oplus$ and $\otimes$ denote the natural addition and multiplication on ordinals (also called Hessenberg operations).
\invis{
Il conviendrait de pr\'eciser les propri\'et\'es qu'on utilisera, et \'eventuellement de faire une sous-sous-section sur ces op\'erations naturelles.
}

\medskip

For every set $A$ of ordinals, let $\sup A$ denote its {\em supremum},
\ie\  the least ordinal greater than or equal to every element of $A$,
and let $\sup^+A$ denote its least strict upper-bound,
\ie\ least ordinal greater than every element of $A$.
In particular, $\sup^+\varnothing=\sup\varnothing=0$ and $\sup^+A=\sup\{\alpha+1:\alpha\in A\}$.

\subsubsection{Posets}\label{section:posets}

We view a {\em poset} (or {\em ordered set}) $\bfp$ 
as a pair $(X,\leq)$,
where $\leq$ is the {\em ordering} of the poset and $X$ the {\em vertex set}. 
Thus an ordering of $X$ is a set of ordered pairs of elements of $X$.
If needed, we may denote by $\leq_\bfp$ the ordering of a poset $\bfp$.

Given two posets $\bfp:=(X,\leq_\bfp)$ and $\bfq:=(X,\leq_\bfq)$ with the same vertex set $X$~:
\begin{itemize}
\item
$\bfp\sqsubseteq\bfq$ means that $\bfq$ is an (edge)-{\em extension} of $\bfp$, 
\ie\ that $x\leq_\bfp y\Ra x\leq_\bfq y$.
\item
let $\bfp\sqcap\bfq:=(X,\leq_\bfp\cap\leq_\bfq)$ denote the intersection poset on $X$.
Likewise is defined the intersection of any number of posets on a same set.
\end{itemize}

An {\em initial segment} $Y$ of a poset $\bfp:=(X,\leq)$ is any set $Y\subseteq X$ of vertices such that
$x\leq x'\in X'\Ra x\in X'$.
Final segments are defined likewise.  

\medskip

An {\em order type} is an isomorphy type of posets.
The {\em order type} of a well order $\bfp$, is identified with the unique ordinal it is isomorphic to, and
will be denoted $\tau(\bfp)$.

\medskip

When we write that an application $f$ between ordered sets is {$\leq$-increasing},
we mean that $x\leq y\Ra f(x)\leq f(y)$.

\begin{notat}
If $\prec$ is a binary relation on a set $X$, \eg\ an ordering $\leq$, or the corresponding strict ordering $<$, or $\not\geq$, \etc, then for each $x\in X$ and $Y\subseteq X$, we let~: 
\[
\{Y\prec x\}:=\{y\in Y:y\prec x\}.
\]
If in addition $\bfr$ is a relational structure with vertex set $X$, 
\eg\ of the form $(X,\leq)$, or $(X,>)$, or $(X,\not\geq)$, \etc,
then we let~: 
\[
\{\bfr\prec x\}:=\bfr\restriction\{y\in X:y\prec x\}
\]
denote the corresponding induced substructure.
\end{notat}

\subsubsection{WPO}\label{section:WPO}
Basics on WPO can be found in~\cite{milnerwqobqo}.
Recall that, if a poset $\bfp$ is a WPO, then $\ell(\bfp)$ denotes its length.
Notice that if $\bfp$ is a well order then $\ell(\bfp)=\tau(\bfp)$.

WPO also admit the following characterization~:
a poset $\bfp$ is a WPO \iff\ the collection of its initial segments is well founded under inclusion,
and in this case the length of $\bfp$ is equal to the height of its vertex set.

We shall use the following observations.
Given a WPO $\bfp:=(X,\leq)$~:

\begin{itemize}
\item
The length may be inductively computed~\cite{dejongh-parikh}~:
\begin{equation}\label{eq:induction}
\ell(\bfp)=\sup^+_{x\in X}\ell(\{\bfp\not\geq  x\}).
\end{equation}
\item  If $X'\subseteq X$ then~: 
\begin{equation}\label{eq:restriction} 
\ell(\bfp\restriction X')\leq\ell(\bfp) 
\leq\ell((\bfp\restriction X')\uplus(\bfp\restriction X\bs X'))
=\ell(\bfp\restriction X')\oplus\ell(\bfp\restriction X\bs X').
\end{equation}
The middle inequality follows from $\bfp$ being an edge-extension of $(\bfp\restriction X')\uplus(\bfp\restriction X\bs X')$, while the right-hand one is~\eqref{eq:dejongh}.
Incidentally, notice that if, in addition, $X'$ is an initial segment, then 
$\ell(\bfp\restriction X')+\ell(\bfp\restriction X\bs X')\leq\ell(\bfp)$.
\end{itemize}

We shall also need the following lemmas~:

\begin{lem}\label{lem:longcut} 
Given a WPO $\bfp$ of length $\alpha$, 
consider a decomposition $\alpha=\alpha'+\alpha''$ of this length.
There is a partition of the domain $X$ of $\bfp$ into an initial segment $X'$ and a final segment $X''$ of $\bfp$ such that $\ell(\bfa\restriction X')=\alpha'$ and $\ell(\bfa\restriction X'')=\alpha''$.
\end{lem}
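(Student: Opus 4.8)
The plan is to prove Lemma~\ref{lem:longcut} by transfinite induction on the length $\alpha$ of $\bfp$, simultaneously for all decompositions $\alpha=\alpha'+\alpha''$. The inductive inductive computation~\eqref{eq:induction} of the length, $\ell(\bfp)=\sup^+_{x\in X}\ell(\{\bfp\not\geq x\})$, together with the restriction inequalities~\eqref{eq:restriction}, will be the workhorses. The cases $\alpha'=0$ (take $X'=\varnothing$, $X''=X$) and $\alpha''=0$ (take $X'=X$, $X''=\varnothing$) are trivial, so assume $\alpha',\alpha''>0$.

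First I would treat the case where $\alpha''$ is a successor, say $\alpha''=\beta+1$, so $\alpha=\alpha'+\beta+1$. By~\eqref{eq:induction} there is a vertex $x_0$ with $\ell(\{\bfp\not\geq x_0\})\geq\alpha'+\beta$; since $\{\bfp\not\geq x_0\}$ is an initial segment of $\bfp\bs\{x_0\}$ and removing one vertex drops the length by at most the length of a one-element poset, one checks $\ell(\{\bfp\not\geq x_0\})\leq\alpha$ forces $\ell(\{\bfp\not\geq x_0\})\in\{\alpha'+\beta,\alpha\}$; the value $\alpha$ is impossible because $\{\bfp\not\geq x_0\}$ misses $x_0$ and is an initial segment, so $\ell(\{\bfp\not\geq x_0\})=\alpha'+\beta$. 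Apply the induction hypothesis to $\{\bfp\not\geq x_0\}$ and its decomposition $\alpha'+\beta$ to get $Y'$ initial, $Y''$ final inside $\{\bfp\not\geq x_0\}$ with the prescribed lengths $\alpha'$, $\beta$. Then set $X':=Y'$ and $X'':=X\bs Y'$; one must verify $X'$ is an initial segment of $\bfp$ (it is, being an initial segment of an initial segment), that $\ell(\bfp\restriction X')=\alpha'$ (it equals $\ell(\{\bfp\not\geq x_0\}\restriction Y')=\alpha'$ since the induced orderings agree), and that $\ell(\bfp\restriction X'')=\alpha''=\beta+1$; the last uses that $X''$ contains $x_0$ together with $Y''$ and everything comparable-above, and that $\bfp$ edge-extends $(\bfp\restriction Y')\uplus(\bfp\restriction X'')$ so $\alpha\leq\alpha'\oplus\ell(\bfp\restriction X'')$, while $\ell(\bfp\restriction X'')\leq(-\alpha')+\alpha=\beta+1$ by the ``initial segment'' addendum to~\eqref{eq:restriction} (using $X'$ initial).

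Next the limit case: $\alpha''$ a limit ordinal. Write $\alpha''=\sup_{\xi<\lambda}\beta_\xi$ as an increasing union, so $\alpha=\sup_{\xi<\lambda}(\alpha'+\beta_\xi)$ with each $\alpha'+\beta_\xi<\alpha$. For each $\xi$ the successor/earlier case (or the induction hypothesis) furnishes an initial segment $X'_\xi$ of $\bfp$ with $\ell(\bfp\restriction X'_\xi)=\alpha'$ and $\ell(\bfp\restriction(X\bs X'_\xi))\geq\beta_\xi$ — but one needs these nested coherently. The cleaner route: first produce, using~\eqref{eq:induction}, a vertex or small initial segment realizing length exactly $\alpha'$ on its downset-complement side and iterate; more robustly, decompose $\alpha'+\alpha''$ as $(\alpha'+\beta_0)+\gamma$ where the tail $\gamma$ satisfies $\beta_0+\gamma=\alpha''$, apply the already-established cases to peel off an initial segment of length $\alpha'+\beta_0$, then recurse inside the final segment on the decomposition $\gamma$. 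Since $\alpha'+\beta_0>\alpha'\geq 1$ when $\beta_0>0$, strictly less of the ordinal remains, so this recursion (formally, an induction on $\alpha''$ with $\alpha'$ fixed, after the base cases) terminates and glues an initial segment of length $\alpha'$ with a final segment of length $\alpha''$.

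The main obstacle I anticipate is the bookkeeping in the limit case — ensuring the pieces produced at successive stages are genuinely nested initial/final segments of $\bfp$ so that the lengths add up correctly, and handling the subtlety that $\ell(\bfp\restriction X'')$ could a priori exceed $\alpha''$ (the right inequality in~\eqref{eq:restriction} only gives $\alpha\leq\alpha'\oplus\ell(\bfp\restriction X'')$, which bounds $\ell(\bfp\restriction X'')$ from below, not above). The upper bound on the final segment's length is recovered precisely from the ``incidentally'' remark after~\eqref{eq:restriction}: when $X'$ is an \emph{initial} segment, $\ell(\bfp\restriction X')+\ell(\bfp\restriction X'')\leq\ell(\bfp)=\alpha'+\alpha''$, whence $\ell(\bfp\restriction X'')\leq\alpha''$ once $\ell(\bfp\restriction X')=\alpha'$ is arranged. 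So the whole argument hinges on always cutting along initial segments and invoking that ordinary-addition inequality on both sides.
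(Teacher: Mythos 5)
Your strategy --- a transfinite induction on $\alpha$ driven by the recursion $\ell(\bfp)=\sup^+_{x}\ell(\{\bfp\not\geq x\})$ --- is genuinely different from the paper's, which needs no induction at all: since $\ell(\bfp)$ is attained (de Jongh--Parikh), one takes a linear extension $\bfa$ of $\bfp$ of type exactly $\alpha$, cuts its domain into its initial segment $Y'$ of type $\alpha'$ and the complementary $Y''$ of type $\alpha''$ (so that $Y'$ is an initial segment of $\bfp$ with $\ell(\bfp\restriction Y')\geq\alpha'$ and $\ell(\bfp\restriction Y'')\geq\alpha''$), shrinks $Y'$ to an initial segment $X'$ with $\ell(\bfp\restriction X')=\alpha'$ exactly, and concludes by the squeeze $\alpha=\alpha'+\alpha''\leq\alpha'+\ell(\bfp\restriction(X\bs X'))\leq\ell(\bfp)=\alpha$. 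Your closing remark, that the upper bound on the final piece comes from the ordinary-addition inequality for initial segments, is precisely the paper's key step.

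As written, however, your induction has two genuine gaps. First, in the successor step the lower bound $\ell(\bfp\restriction X'')\geq\beta+1$ does \emph{not} follow from $\alpha\leq\alpha'\oplus\ell(\bfp\restriction X'')$: the natural sum can absorb the extra $1$ (for $\alpha'=1$, $\beta=\omega$ one has $\alpha=\omega+1$ and $1\oplus\omega=\omega+1$, so the inequality is already satisfied with $\ell(\bfp\restriction X'')=\beta$). This is repairable: every element of $Y''$ lies in $\{X\not\geq x_0\}$ while $x_0\in X''$, so \eqref{eq:induction} applied to $\bfp\restriction(Y''\cup\{x_0\})$ gives $\ell(\bfp\restriction X'')\geq\ell(\bfp\restriction Y'')+1=\beta+1$ directly. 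Second, and more seriously, the limit case is not actually carried out. The fallback recursion ``write $\alpha''=\beta_0+\gamma$ and recurse on $\gamma$'' need not terminate: for $\beta_0>0$ one can perfectly well have $\gamma=\alpha''$ (e.g.\ $\omega=1+\omega$), so the claim that strictly less of the ordinal remains is false; and the $\sup_{\xi}\beta_\xi$ variant founders on exactly the coherence problem you flag, since the initial segments produced for different $\xi$ need not be nested and nothing forces a single $X'$ of length $\alpha'$ to leave a complement of length $\geq\beta_\xi$ for every $\xi$. Closing this case essentially requires the linear-extension argument anyway, so I would abandon the induction in favour of that direct proof.
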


\begin{proof}
Given a linear extension $\bfa:=(X,\preccurlyeq)$ of type $\alpha$ of $\bfp$,
consider the initial segment $Y'$ of $\bfa$ of type $\alpha'$, 
and let $Y''$ denote the complementary final segment, of type $\alpha''$. 
Thus $\ell(\bfp\restriction Y')\geq\alpha'$ and $\ell(\bfp\restriction Y'')\geq\alpha''$. 
So consider an initial segment $X'$ of $\bfp\restriction Y'$ such that 
$\ell(\bfp\restriction X')=\alpha'$ and let $X'':=X\bs X'\supseteq Y''$.
In particular $\ell(\bfp\restriction X'')\geq\ell(\bfp\restriction Y'')=\alpha''$.
Then, from~:
\[
\alpha=\ell(\bfp)\geq\ell(\bfp\restriction X')+\ell(\bfp\restriction X'')
=\alpha'+\ell(\bfp\restriction X'')
\geq\alpha'+\alpha''=\alpha
\]
it follows that  $\alpha'+\ell(\bfp\restriction X'')=\alpha'+\alpha''$, and therefore $\ell(\bfp\restriction X'')=\alpha''$.
\end{proof}

\begin{lem}\label{lem:length-init}
Consider a WPO $\bfp=(X,\leq)$ of cardinality $\kappa$.
The length of $\bfp$ is equal to $\kappa$ \iff\
every proper initial segment of $\bfp$ has cardinality less than $\kappa$,
\iff\ for every vertex $x\in X$, the cardinality of $\{X\not\geq x\}$ is less than $\kappa$. 
\end{lem}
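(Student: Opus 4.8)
The plan is to extract everything from the inductive formula~\eqref{eq:induction}, $\ell(\bfp)=\sup^+_{x\in X}\ell(\{\bfp\not\geq x\})$, together with two elementary facts about a WPO $\bfq$: its length is at least the cardinality of its vertex set, and, the length being attained as the order type of a linear extension (de Jongh--Parikh), its length is an ordinal of the \emph{same} cardinality as the vertex set. In particular a WPO whose vertex set has cardinality $\lambda<\kappa$ has length $<\kappa$, since $\kappa$ is an initial ordinal.

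The equivalence between the last two conditions is purely order-theoretic. For each $x\in X$ the set $\{X\not\geq x\}$ is an initial segment of $\bfp$ (if $z\leq y$ and $y\not\geq x$ then $z\not\geq x$), and it is proper because $x\notin\{X\not\geq x\}$. Conversely, every proper initial segment $I$ is contained in one of these: choosing $x\in X\bs I$, no element of $I$ can be $\geq x$ (otherwise $x\in I$), so $I\subseteq\{X\not\geq x\}$. Hence the proper initial segments of $\bfp$ all have cardinality $<\kappa$ if and only if $|\{X\not\geq x\}|<\kappa$ for every $x\in X$.

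It remains to link these with $\ell(\bfp)=\kappa$. Since $\ell(\bfp)$ is always at least $|X|=\kappa$, the equality $\ell(\bfp)=\kappa$ amounts to $\ell(\bfp)\leq\kappa$. If $\ell(\bfp)=\kappa$, then by~\eqref{eq:induction} every $\ell(\{\bfp\not\geq x\})$ is strictly below $\kappa$, hence $|\{X\not\geq x\}|\leq\ell(\{\bfp\not\geq x\})<\kappa$, which is the third condition. For the converse, assume $|\{X\not\geq x\}|<\kappa$ for all $x$. Each $\{\bfp\not\geq x\}$ is then a WPO on fewer than $\kappa$ vertices, so by the remark of the first paragraph $\ell(\{\bfp\not\geq x\})<\kappa$; thus $\kappa$ is a strict upper bound of $\{\ell(\{\bfp\not\geq x\}):x\in X\}$, and~\eqref{eq:induction} gives $\ell(\bfp)\leq\kappa$, whence $\ell(\bfp)=\kappa$.

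I do not anticipate a real difficulty; the one point deserving care is the use, in the converse implication, of the fact that the length of a WPO on $\lambda$ vertices is an ordinal \emph{of cardinality} $\lambda$ (hence $<\lambda^{+}$), which relies on the length being a maximum rather than merely a supremum. One should also note that the degenerate cases ($X$ empty or finite) are covered, all three conditions then holding trivially.
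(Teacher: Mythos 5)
Your proof is correct and follows essentially the same route as the paper's: the equivalence of the last two conditions via cofinality of the sets $\{X\not\geq x\}$ among proper initial segments, and the link with $\ell(\bfp)=\kappa$ via the inductive formula~\eqref{eq:induction} together with the fact that the length of a WPO is equipotent with its vertex set. You merely spell out the details the paper leaves implicit, and your remark that the equipotence of length and vertex set rests on the length being an attained maximum (de Jongh--Parikh) is a correct and worthwhile point of care.
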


\begin{proof}
The second assertion is equivalent to the first one since the collection of $\{X\not\geq x\}$'s is coifinal in the collection of proper initial segments.
As for this second equivalence, recall from~\eqref{eq:induction} that~:
\[
\ell(\bfp)=\sup^+_{x\in X}\ell(\{X\not\geq x\})\geq\sup^+_{x\in X_i}|\{X_i\not\geq_i x\}|
\]
given that the length of a poset is equipotent with its vertex set.%
\footinvis{
This is an attained quantity.
}
\end{proof}

\begin{lem}\label{lem:increase-varphi}
Given a set $X$, of infinite cardinaliy $\kappa$, let us consider finitely many WQO $\bfp_1$, $\dots$, $\bfp_n$ on X of respective lengths $\alpha_1$, $\dots$, $\alpha_n$. 
If $\beta_1$, $\dots$, $\beta_n$ are ordinals of cardinality $\kappa$ such that $\alpha_1\leq \beta_1$, $\dots$, $\alpha_n\leq\beta_n$,
then there are a supserset $Y$ of $X$ and $n$ WQO  $\bfq_1$, $\dots$, $\bfq_n$ on $Y$ such that~: 
\[
\bfq_1\restriction X=\bfp_1,\dots,\bfq_n\restriction X=\bfp_n
\]
and~: 
\[
\ell(\bfq_1)=\beta_1, \dots, \ell(\bfq_n)= \beta_n.
\]
In particular~:
\[
\ell(\bfp_1\sqcap\dots\sqcap\bfp_n)\leq \ell(\bfq_1\sqcap\dots\sqcap\bfq_n).
\]

Furthermore, if the $\bfp_i$'s are well orders one can choose the $\bfq_i$'s to be well orders too. 
\end{lem}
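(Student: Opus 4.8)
The displayed inequality is immediate once the $\bfq_i$ are produced: restriction to a common vertex set commutes with intersection, so $(\bfq_1\sqcap\dots\sqcap\bfq_n)\restriction X=\bfp_1\sqcap\dots\sqcap\bfp_n$, and~\eqref{eq:restriction} then gives $\ell(\bfp_1\sqcap\dots\sqcap\bfp_n)\leq\ell(\bfq_1\sqcap\dots\sqcap\bfq_n)$. So the whole point is the construction of $Y$ and the $\bfq_i$. I would reduce this to the following one-poset statement, applied $n$ times: \emph{if $\bfp$ is a WQO of length $\mu$ on a set $V$ with $|V|=\kappa$} (so $\mu\geq\kappa$) \emph{and $W$ is a set of cardinality $\leq\kappa$ disjoint from $V$, then $\bfp$ extends to a WQO $\bfp^+$ on $V\sqcup W$ with $\ell(\bfp^+)=\mu$; and $\bfp^+$ may be taken to be a well order when $\bfp$ is.} Granting this, put $\gamma_i:=(-\alpha_i)+\beta_i$, so $\beta_i=\alpha_i+\gamma_i$ and $|\gamma_i|\leq\kappa$; choose $Z$ disjoint from $X$ with $|Z|=\kappa$, set $Y:=X\sqcup Z$, and for each $i$ pick $Z_i\subseteq Z$ with $|Z_i|=|\gamma_i|$. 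The ordinal sum $\bfp_i\uloplus\mathbf w_i$ of $\bfp_i$ with a well order $\mathbf w_i$ of type $\gamma_i$ on $Z_i$ is a WQO on $X\sqcup Z_i$ of length $\alpha_i+\gamma_i=\beta_i$ (a well order if $\bfp_i$ is one); applying the one-poset statement to it with the set of new points $Z\bs Z_i$ (of cardinality $\leq\kappa$) yields the required $\bfq_i$ on $Y$.

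The well-order case of the one-poset statement is easy: inserting $|W|$ new points into the ordinal $\mu\geq\kappa$, one just after each of the first $|W|$ elements of an initial segment of type $\kappa$, turns that initial segment into a well order of type $2\cdot|W|+\kappa=\kappa$ when $|W|<\kappa$ and of type $2\cdot\kappa=\kappa$ when $|W|=\kappa$, so the whole order stays of type $\mu$ while its restriction to $V$ is unchanged. The WQO case is the core difficulty, because adding $\kappa$ new points naively — as a bottom segment, as a top segment, or by arbitrarily cloning vertices — strictly increases the length; the new points have to be inserted deep in the order and spread out.

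I would first do the case $\mu=\kappa$. Fix an injection $\iota$ of $W$ into $V$ and let $\bfp^+$ be the blow-up of $\bfp$ in which each vertex $\iota(w)$ is replaced by the two-element chain $w<_{\bfp^+}\iota(w)$, the other vertices being left alone. Being the lexicographic sum over $\bfp$ of chains of size at most $2$, $\bfp^+$ is a WQO, and clearly $\bfp^+\restriction V=\bfp$. To see $\ell(\bfp^+)=\kappa$ I would use Lemma~\ref{lem:length-init}: suppose some proper initial segment $I$ of $\bfp^+$ had $|I|\geq\kappa$. Then $I\cap V$ is a proper — hence, by Lemma~\ref{lem:length-init} applied to $\bfp$, small — initial segment of $\bfp$, so $I$ contains at least $\kappa$ new points, and therefore $\kappa$ many new points $w$ with $\iota(w)\notin I$ (only fewer than $\kappa$ of the new points in $I$ can have their vertex in $I$). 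For each such $w$, since $I$ is downward closed, the whole set of vertices strictly below $\iota(w)$ in $\bfp$ lies in $I\cap V$. Thus $\kappa$ many vertices of $\bfp$ have their strict down-set contained in the fixed small initial segment $I\cap V$; but the vertices with this property are exhausted by $I\cap V$ together with the finitely many minimal elements of $\bfp\restriction(V\bs(I\cap V))$, so there are fewer than $\kappa$ of them — a contradiction.

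Finally, the case $\mu>\kappa$ reduces to the previous one via Lemma~\ref{lem:longcut}: write $\mu=\kappa+\mu''$, split $V$ into an initial segment $V_0$ with $\ell(\bfp\restriction V_0)=\kappa$ and a final segment $V_1$ with $\ell(\bfp\restriction V_1)=\mu''$, and perform the two-chain blow-up only inside $V_0$. Then $V_1$ remains a final segment of $\bfp^+$, the initial segment $V_0\sqcup W$ carries exactly the blow-up of $\bfp\restriction V_0$, which has length $\kappa$ by the case already treated, and since $\bfp^+\sqsubseteq(\bfp^+\restriction(V_0\sqcup W))\uloplus(\bfp\restriction V_1)$ while $V_0\sqcup W$ is an initial segment of $\bfp^+$, the two sides of~\eqref{eq:restriction} (together with additivity of the ordinal length along ordinal sums) force $\ell(\bfp^+)=\kappa+\mu''=\mu$. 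The one genuine obstacle in all of this is keeping the length from growing when $\kappa$ new points are thrown in; once one commits to inserting them as ``shadows'' immediately below low-lying vertices and verifies this through the cardinality criterion of Lemma~\ref{lem:length-init}, the rest is bookkeeping.
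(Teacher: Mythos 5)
Your proof is correct and follows essentially the same route as the paper's: in both, the length increase comes from appending a chain of type $\gamma_i$ on top, and the crux is to absorb the remaining new points by splitting existing vertices into two-element chains inside an initial segment whose length equals its cardinality, checking via Lemma~\ref{lem:length-init} that this leaves the length unchanged. The only differences are organizational — the paper proceeds one coordinate at a time and doubles an entire initial segment of length $|\gamma_i|$ via a lexicographic product with $\{0,1\}$, whereas you treat all coordinates simultaneously and double an injectively chosen set of vertices inside an initial segment of length $\kappa$, verifying the cardinality criterion through the finiteness of the antichain of minimal elements above a small initial segment.
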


\begin{proof}
For $i\in\{1,\dots,n\}$, let  $\leq_i$ denote the ordering of $\bfp_i$. 
First observe that it can be assumed that there is a $j$ such that $\beta_i=\alpha_i$ for each $i\not=j$.
Indeed the general case follows from the succession of $n$ applications of this particular case.
And, without loss of generality, $j$ can be assumed to be equal to $1$.
So let us assume that $\beta_i=\alpha_i$ for every $i\not=1$. 

Let $\gamma_1:=(-\alpha_1+ \beta_1)$,
so that $\beta_1=\alpha_1+\gamma_1$. 
Let $Z$ be  a set of cardinality $|\gamma_1|$ disjoint from $X$ and let $Y:= X\cup Z$.  
Given a well ordering $\preccurlyeq$ of type $\gamma_1$ on $Z$,
consider the poset $\bfq_1:=\bfp_1+(Z,\preccurlyeq)$.
Thus $\bfq_1\restriction X=\bfp_1$ and, clearly, $\ell(\bfq_1)=\ell(\bfq_1\restriction X)+\gamma_1=\alpha_1+\gamma_1=\beta_1$.
If $n=1$, then the proof is complete.
So assume that $n\geq 2$.

Consider $i\in\{2,\dots,n\}$.
Letting $\delta_i:=(-|\gamma_1|)+\alpha_i$, 
so that $|\gamma_1|+\delta_i=\alpha_i$,
consider, with Lemma~\ref{lem:increase-varphi}, 
an initial segment $X_i$ of $\bfp_i$ such that $\ell(\bfp_i\restriction X_i)=|\gamma_1|$ and $\ell(\bfp_i\restriction X\bs X_i)=\delta_i$.
In particular $\{X_i\not\geq_i x\}$ has cardinality less than $|\gamma_1|$ for each $x\in X_i$ (Lemma~\ref{lem:length-init}).
Then consider the lexicographical product poset $\bfr_i:=(\{0,1\},\leq)\multordby(\bfp_i\restriction X_i)$
on $\{0,1\}\times X_i$.
Observe that $\ell(\bfr_i)=|\gamma_1|$ because of Lemma~\ref{lem:length-init}.
Indeed 
for each $(\varepsilon,x)\in\{0,1\}\times X_i$,
$\{\{0,1\}\times X_i\not\geq_{\bfr_i}(\varepsilon,x)\}\subseteq\{0,1\}\times\{X_i\not\geq_i x\}\cup\{(0,x)\}$ has cardinality less than $|\gamma_1|$.

Now given a bijection $f:X_i\to Z$, 
let us consider a poset $\bfq_i=(Y,\leq_i')$, 
on $Y=X\dot\cup Z=X_i\dot\cup(X\bs X_i)\dot\cup Z$ such that~:
\begin{enumerate}
\item
$\bfq_i\restriction X=\bfp_i$~;
\item
$\bfq_i\restriction X_i\dot\cup Z$ be isomorphic with $\bfr_i$ through
$(0,x)\mapsto f(x)$ and $(1,x)\mapsto x$~;
\item
$X_i\dot\cup Z$ be an initial segment of $\bfq_i$. 
\footinvis{
Observe that it is not required that, for each $x\in X_i$ and $y\in X\bs X_i$~: $f(x)\leq y\Lra x\leq y$.
(and $f(x)\geq y$ \iff\ $x\geq y$, which never occurs).
}
\end{enumerate}
Such a poset can be obtained, starting from $\bfp_i$,
by substituting a two vertex linear order for each element of $X_i$. 
We claim that $\ell(\bfq_i)=\beta_i$.
Indeed~:
\[
\alpha_i=\ell(\bfp_i)
=\ell(\bfq_i\restriction X)
\leq
\ell(\bfq_i)\leq\ell(\bfq_i\restriction(X_i\dot\cup Z))+\ell(\bfq_i\restriction(X\bs\ X_i))=|\gamma_1|+\delta_i=\alpha_i=\beta_i.
\]
\end{proof}


\nouvellepage
\subsection{Conventions regarding ordinal operations}\label{section:specific-conven-ordin-oper}


For each  finitary operation $\phi$ on ordinals,
we may consider terms of the form
$\phi_\#(\cdots)$ of which the arguments are ordinals and underlined ordinals.
Such a term denotes the least ordinal strictly greater than 
the evaluation of the expression obtained by replacing $\phi_\#$ by $\phi$
and each argument by a non-greater ordinal,
by a lesser one if this argument is underlined~;
\eg~:
\begin{multline*}
\phi_\#(\alpha_1,\dots,\alpha_m,\ul{\beta_1},\dots,\ul{\beta_n}):=
\sup^+\{
\phi(\alpha_1',\dots,\alpha_m',\beta_1',\dots,\beta_n'):
\\
\alpha_1'\leq\alpha_1,\dots,\alpha_m'\leq\alpha_m,
\beta_1'<\beta_1,\dots,\beta_n'<\beta_n
\}.
\end{multline*}
Notice that if no argument is underlined and $\phi$ is $\leq$-increasing in each variable,
then $\phi_\#(\cdots)=\phi(\cdots)+1$.

Now we introduce a finitary operation $\ul\phi$ on ordinals with the same arity as $\phi$~:
\begin{align*}
\ul\phi(\alpha_1,\dots,\alpha_n)&:=\phi_\#(\ul{\alpha_1},\dots,\ul{\alpha_n})\\
&=\sup^+\{\phi(\alpha_1',\dots,\alpha_n'):\alpha_1'<\alpha_1,\dots,\alpha_n'<\alpha_n\}\\
&:=\sup\{\phi(\alpha_1',\dots,\alpha_n')+1:\alpha_1'<\alpha_1,\dots,\alpha_n'<\alpha_n\}.
\end{align*}

The following observations are easily checked~:

\begin{itemize}
\item If $\phi$ is an associative  binary operation, then $\ul\phi$ is associative.
\item If $\phi_n$ is a  $n$-ary operation obtained from an associative binary  operation  $\phi_2$, 
then $\ul{\phi_2}$ is associative and $\ul{\phi_n}$ is obtained from $\ul{\phi_2}$. 
\end{itemize}

In particular the expression $\alpha_1\uloplus\cdots\uloplus\alpha_n$ is not ambiguous~;
namely~:
\[
\alpha_1\uloplus\cdots\uloplus\alpha_n=
\sup^+\{\alpha_1'\oplus\cdots\oplus\alpha_n:
\alpha_1'\leq\alpha_1,\dots,\alpha_m'\leq\alpha_m\}.
\]

Observe that an ordinal $\alpha$ is {\em indecomposable},
\ie, it is not the sum of two lesser ordinals \iff\ $\alpha\uloplus\alpha=\alpha$.
In particular every initial ordinal is indecomposable.
We shall need the following distributivity property~:
\begin{equation}\label{eq:indec-distrib}
\alpha\uloplus\alpha=\alpha\Ra
\alpha\multordby(\beta\oplus\gamma)
=
(\alpha\multordby\beta)\oplus(\alpha\multordby\gamma).
\end{equation}


\section{Proof of Theorem~\ref{theo:main}}\label{section:proof}

Let us intoduce the following finitary operations on ordinals~:
\[
\varphi^+_P(\alpha_1,\dots,\alpha_n):=\sup^+\{\ell(\bfp_1\sqcap\dots\sqcap\bfa_n):\bfa_1,\dots,\bfa_n\text{~WPO, }\ell(\bfa_1)=\alpha_1,\dots,\ell(\bfa_n)=\alpha_n\}.
\]
Relativizing this operation to WPO's that are linear, \ie\ to WLO's, we also let~:
\[
\varphi^+_L(\alpha_1,\dots,\alpha_n):=\sup^+\{\ell(\bfa_1\sqcap\dots\sqcap\bfa_n):\bfa_1,\dots,\bfa_n\text{~WLO, }\tau(\bfa_1)=\alpha_1,\dots,\tau(\bfa_n)=\alpha_n\}.
\]
Obviously,  $\varphi^+_P(\alpha_1,\dots,\alpha_n)=\varphi^+_L(\alpha_1,\dots,\alpha_n)=0$ whenever $\alpha_1,\dots,\alpha_n$ do not have the same cardinality. 
Also, $\varphi^+_L(\alpha)=\varphi^+_P(\alpha)=\alpha+1$ for every ordinal $\alpha$, 
and $\varphi^+_L(k,\dots, k)=\varphi^+_P(k, \dots, k )=k+1$ if $k <\omega$, thus we will not need to consider the case $n=1$ nor the case of finite ordinals.  

Besides, $\varphi^+_L\leq\varphi^+_P$~; in fact, we shall see that equality holds. 
Indeed,   in terms of these two operations, Theorem~\ref{theo:main} can be rephrased as follows~:

\begin{theo}\label{lem:main}
$\varphi_{P}^+= \varphi^+_{L}$ and for every $n\geq 2$~:
\begin{equation}\label{eq:inters-main}
\varphi_{P}^+(\alpha_1,\dots,\alpha_n)=
\kappa\multordby(q_\kappa(\alpha_1)\otimes\dots\otimes q_\kappa(\alpha_n))
+|r_\kappa(\alpha_1)+\dots+r_\kappa(\alpha_n)|^+
\end{equation}  
provided that 
the arguments be equipotent,  of common infinite cardinality $\kappa$.
\end{theo}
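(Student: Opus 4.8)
The plan is to prove Theorem~\ref{lem:main} by first reducing to the case $n=2$, then establishing the two inequalities $\varphi^+_L\geq$ (the claimed formula) and $\varphi^+_P\leq$ (the claimed formula) separately; since trivially $\varphi^+_L\leq\varphi^+_P$, chaining these two inequalities yields both the equality $\varphi^+_L=\varphi^+_P$ and the formula. The reduction to $n=2$ should follow by induction on $n$: writing $\varphi^+_P(\alpha_1,\dots,\alpha_n)$ as a two-fold operation applied to $\alpha_1$ and a ``length realizing'' ordinal for $\alpha_2\sqcap\dots\sqcap\alpha_n$ — but one has to be careful since the intersection of the tail need not be a WLO even if the $\alpha_i$ are, so really one wants to show the underlined operation $\ul\varphi$ attached to $\phi(\beta,\gamma):=\kappa\cdot(q_\kappa(\beta)\otimes q_\kappa(\gamma))$-plus-Hartog-of-remainders is associative in the relevant sense and matches up with the $n$-ary version, using the distributivity~\eqref{eq:indec-distrib} of multiplication by an indecomposable over $\oplus$ together with associativity of $\otimes$ and of $\oplus$, plus the fact that $|{\cdot}|^+$ interacts well with sums of ordinals below $\kappa$ (their sum still has cardinality $\le\kappa$, and $|\xi|^+=\kappa^+$ once $\xi\ge\kappa$, while if all remainders are $<\kappa$ one controls $|\sum r_i|^+\le\kappa$). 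The bookkeeping here is routine but fiddly; I would isolate it as a lemma saying the right-hand side of~\eqref{eq:inters-main}, viewed as a function of $(\alpha_1,\dots,\alpha_n)$, is ``$n$-fold iterable'' from its binary instance.

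For the lower bound (minoration), for $n=2$ I would construct, for any $\alpha_1,\alpha_2$ of common cardinality $\kappa$ with euclidean data $q_i,r_i$, a pair of well orders $\bfa_1,\bfa_2$ on a common set with $\tau(\bfa_i)=\alpha_i$ whose intersection has length as close as we like to $\kappa\cdot(q_1\otimes q_2)+|r_1+r_2|^+$ — or rather, a family of such pairs whose intersection lengths are cofinal below that ordinal. The natural building block is a Sierpiński-type construction on $\kappa\times(q_1\otimes q_2)$-many points plus a ``tail'' of size $<\kappa$ handling the remainders: on the bulk, take two orderings whose intersection realizes $\kappa\cdot(q_1\otimes q_2)$ (this is where de Jongh--Parikh's $\ell(\bfp\times\bfq)=\ell(\bfp)\otimes\ell(\bfq)$ and multiplication by the indecomposable $\kappa$ come in — essentially lay down $q_1$ resp.\ $q_2$ blocks of size $\kappa$ and Sierpińskize within), and on the tail of size $\max(|r_1+r_2|,\dots)$ arrange via a cofinal sequence of ordinals $\delta<r_1+r_2$ and the standard antichain-free Sierpiński order (intersection of two well orders of types summing appropriately) that we get lengths cofinal in $|r_1+r_2|^+$. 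The key subtlety is to make the two pieces interact so that their concatenation has order-type exactly $\alpha_i$ on each coordinate while the intersection-length adds up; Lemma~\ref{lem:longcut} and Lemma~\ref{lem:increase-varphi} are exactly the tools for splicing in the right types, so I would lean on those.

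For the upper bound (majoration), the heart of the matter, I would prove by induction — using the inductive formula~\eqref{eq:induction}, $\ell(\bfp)=\sup^+_{x}\ell(\{\bfp\not\ge x\})$ — that for any two WPO $\bfa_1,\bfa_2$ on $X$ with $\ell(\bfa_i)=\alpha_i$, the intersection $\bfp:=\bfa_1\sqcap\bfa_2$ satisfies $\ell(\bfp)<\kappa\cdot(q_1\otimes q_2)+|r_1+r_2|^+$. Fix $x\in X$; then $\{\bfp\not\ge x\}$ is contained in $\{\bfa_1\not\ge x\}\cap\{\bfa_2\not\ge x\}$ as a set, hence its length is at most $\ell(\bfa_1\restriction S\ \sqcap\ \bfa_2\restriction S)$ where $S=\{X\not\ge_1 x\}\cup\{X\not\ge_2 x\}$... more precisely one wants $\{\bfp\not\ge x\}\subseteq \{X\not\ge_1 x\}\cap\{X\not\ge_2 x\}$? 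No: $y\not\ge_\bfp x$ means $y\not\ge_1 x$ \emph{or} $y\not\ge_2 x$, so in fact $\{\bfp\not\ge x\}=\{X\not\ge_1 x\}\cup\{X\not\ge_2 x\}$, and I'd decompose this union via~\eqref{eq:restriction} into $\ell$ of $\bfa_1\restriction\{X\not\ge_1 x\}\ \sqcap\ \bfa_2\restriction(\cdot)$ and the complementary final segment on which $\bfa_1\ge x$ but $\bfa_2\not\ge x$ (an initial segment of $\bfa_2\restriction\{X\not\ge_2 x\}$), handling each by induction: the first has $\bfa_1$-length $<\alpha_1$ giving strictly smaller $q_1$ (or a smaller remainder), the second has $\bfa_2$-length $<\alpha_2$. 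One then feeds the sub-bounds into $\sup^+$ and checks that $\sup^+$ over all $x$ of these strictly-smaller right-hand sides stays $<\kappa\cdot(q_1\otimes q_2)+|r_1+r_2|^+$ — this uses that $\kappa$ is indecomposable so $\kappa\cdot(q_1'\otimes q_2)\oplus\kappa\cdot(q_1\otimes q_2')$-type sums stay below $\kappa\cdot(q_1\otimes q_2)$ when $q_i'<q_i$ (properties of $\otimes$), and that $|r_1+r_2|^+$ is an initial (hence limit, hence indecomposable) ordinal absorbing the bounded remainder contributions. \textbf{The main obstacle} I anticipate is precisely this last $\sup^+$ estimate in the majoration: controlling how the euclidean data $(q_i,r_i)$ of the restricted pieces recombine — in particular separating the ``one $q_i$ drops'' case from the ``$q_i$ unchanged but remainder drops'' case, and verifying the strict inequality survives the supremum over the possibly cofinally-many $x$ realizing the near-extremal lengths — which is where the interplay between $\otimes$, $\oplus$, indecomposability of $\kappa$, and the Hartog operation $|{\cdot}|^+$ has to be handled with real care rather than by a one-line appeal.
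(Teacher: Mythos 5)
Your overall architecture coincides with the paper's: reduction to $n=2$ by showing that the binary instances of $\varphi^+$ and of the right-hand side of~\eqref{eq:inters-main} are both associative in the appropriate ``underlined'' sense and generate the $n$-ary versions; minoration via a mixing bijection between $K\multsetby A$ and $K\multsetby B$ (a Sierpinski-type construction) realizing $\kappa\multordby(q_1\otimes q_2)$ on the bulk, spliced by a block-decomposition lemma with arbitrary equipotent copies of the remainders so as to sweep out $|r_1+r_2|^+$; majoration from $\ell(\bfp)=\sup^+_x\ell\{\bfp\not\geq x\}$ together with the observation $\{X\not\geq_\bfp x\}=\{X\not\geq_1 x\}\cup\{X\not\geq_2 x\}$. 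All of this matches Sections~\ref{section:reduction}--\ref{section:majoration}.

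There is, however, a genuine gap exactly at the point you flagged as the main obstacle, and the mechanism you propose for closing it is false as stated. You want the $\sup^+$ over $x$ to remain below $\kappa\multordby(q_1\otimes q_2)$ because ``$\kappa\multordby(q_1'\otimes q_2)\oplus\kappa\multordby(q_1\otimes q_2')$-type sums stay below $\kappa\multordby(q_1\otimes q_2)$ when $q_i'<q_i$''. That inequality fails whenever $q_1\otimes q_2$ is decomposable: already $q_1=q_2=2$, $q_1'=q_2'=1$ gives $(1\otimes 2)\oplus(2\otimes 1)=4=2\otimes 2$, and the indecomposability of $\kappa$ does not repair this. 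The paper's resolution (Proposition~\ref{prop:major-longdim2:bis}) is a three-case induction on $(\alpha,\beta)$: (i) if a remainder is nonzero, peel it off using the additive bound $\varphi^+(\alpha'+\alpha'',\beta)\leq\varphi_\#(\alpha',\beta)\uloplus\varphi_\#(\alpha'',\beta)$ of Lemma~\ref{lem:major-sum:bis}, which is proved by cutting the \emph{domain} with Lemma~\ref{lem:longcut}, not by the inductive length formula; (ii) if both arguments are multiples of $\kappa$ but one is decomposable, apply the same additive lemma to a decomposition $\alpha'=\alpha_1'\oplus\alpha_2'$ and conclude with the distributivity~\eqref{eq:indec-distrib}; (iii) only when both $\alpha$ and $\beta$ are indecomposable is the recursion $\varphi(\alpha,\beta)\leq\varphi_\#(\ul\alpha,\beta)\uloplus\varphi_\#(\alpha,\ul\beta)$ of Lemma~\ref{lem:indmaj} invoked, and there one does not compare natural sums of products with a dropped factor: one shows each of the two terms is at most $\lambda:=\kappa\multordby(q_1\otimes q_2)$, which in this case is itself indecomposable, so $\lambda\uloplus\lambda=\lambda$ absorbs both. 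Without first isolating the decomposable case and treating it by a domain-cutting argument, your $\sup^+$ estimate cannot be made to close; the rest of your plan is sound and agrees with the paper.
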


In Section~\ref{section:reduction} below we reduce the general case to the particular case $n=2$.
The minoration of $\varphi^+_P$ is proved in Section~\ref{section:minoration} (Proposition~\ref{prop:minor-longdim2})
and the  majoration in Section~\ref{section:majoration} (Proposition~\ref{prop:major-longdim2:bis}).

 \subsection{The derivation of the general case from the case $n=2$}\label{section:reduction}

\subsubsection{Auxiliary operations}

For this reduction, we introduce two auxiliary operations $\widetilde\varphi_P$ and $\widetilde\varphi_L$. Let~:
\[
\widetilde\varphi_P(\alpha_1,\dots,\alpha_n):=\sup^+\{\ell(\bfa_1\sqcap\dots\sqcap\bfa_n):\bfa_1,\dots,\bfa_n\text{~WPO, }\ell(\bfa_1)<\alpha_1,\dots,\ell(\bfa_n)<\alpha_n\}
\]
and~:
\[
\widetilde\varphi_L(\alpha_1,\dots,\alpha_n):=\sup^+\{\ell(\bfa_1\sqcap\dots\sqcap\bfa_n):\bfa_1,\dots,\bfa_n\text{~WLO, }\tau(\bfa_1)<\alpha_1,\dots,\tau(\bfa_n)<\alpha_n\}.
\]

Obviously  $\widetilde\varphi_L(\alpha)=\widetilde\varphi_P(\alpha)=\alpha$ and in general $\widetilde\varphi_L\leq\widetilde\varphi_P$.

\subsubsection{The strategy of reduction to the case $n=2$}

The reduction will rely on the following observations. 
Letting $\widetilde\varphi_{P,n}$, $\widetilde\varphi_{L,n}$ denote the restrictions of these operations to $n$ variables~:
\begin{itemize}
\item
$\widetilde\varphi_{P,2}$ and $\widetilde\varphi_{L,2}$ are associative and $\widetilde\varphi_{P,n}$, $\widetilde\varphi_{L,n}$ are their extensions by associativity.
\item
$\widetilde\varphi_{P,n}$ and $\varphi^+_{P,n}$ are recoverable from one another
(and likewise for $\widetilde\varphi_{L,n}$ and $\varphi^+_{L,n}$).
\item
Letting $\theta^+$ denote the operation corresponding to the right-hand member of~\eqref{eq:inters-main},
consider the operation $\widetilde\theta$ defined from $\theta^+$
as $\widetilde\varphi_{P,n}$ is definable $\varphi^+_{P,n}$~;
then $\widetilde\theta_2$ is associative and $\widetilde\theta$ is its extension by associativity.
\item
The theorem in the case $n=2$ precisely says that $\varphi^+_{L,2}=\theta^+_2=\varphi^+_{P,2}$.
\end{itemize}
Note that $\widetilde\theta$ will not have to be explicited.

\subsubsection{The reduction}

So let us proceed to this reduction.
First observe that Lemma~\ref{lem:increase-varphi} yields~:

\begin{lem}\label{lem:monotony-restr} 
Given two finite tuples $(\alpha_1,\dots,\alpha_n)$, and $(\alpha_1',\dots,\alpha_n')$ of equipotent ordinals with respective cardinalities $\kappa$ and $\kappa'$ and such that $\alpha_1'\leq \alpha_1$, $\dots$, $\alpha_n'\leq\alpha_n$~: 
\begin{equation*}
\varphi_P^+(\alpha_1',\dots,\alpha_n')\leq \varphi_P^+(\alpha_1,\dots,\alpha_n)\text{~and }  \varphi_L^+(\alpha_1',\dots,\alpha_n')\leq\varphi_L^+(\alpha_1,\dots,\alpha_n).
\end{equation*}
\end{lem}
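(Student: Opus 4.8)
The plan is to derive Lemma~\ref{lem:monotony-restr} as a routine consequence of Lemma~\ref{lem:increase-varphi}, using the definitions of $\varphi_P^+$ and $\varphi_L^+$ as least strict upper bounds of lengths of intersection posets. The point is that enlarging the target lengths from $(\alpha_1',\dots,\alpha_n')$ to $(\alpha_1,\dots,\alpha_n)$ can only enlarge (or leave unchanged) the set of achievable intersection lengths, hence can only raise its least strict upper bound.

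First I would fix an arbitrary family $\bfa_1',\dots,\bfa_n'$ of WPO's on a common vertex set $X$ with $\ell(\bfa_i')=\alpha_i'$ for all $i$; it suffices to produce, on some superset, a family $\bfq_1,\dots,\bfq_n$ of WPO's with $\ell(\bfq_i)=\alpha_i$ and $\ell(\bfq_1\sqcap\dots\sqcap\bfq_n)\geq\ell(\bfa_1'\sqcap\dots\sqcap\bfa_n')$, since then every element of the set defining $\varphi_P^+(\alpha_1',\dots,\alpha_n')$ is bounded by some element of the set defining $\varphi_P^+(\alpha_1,\dots,\alpha_n)$, and the least strict upper bounds compare accordingly. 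But this is exactly what Lemma~\ref{lem:increase-varphi} supplies: since the $\alpha_i$ are equipotent (of cardinality $\kappa$) and $\alpha_i'\leq\alpha_i$, we may apply that lemma with $\beta_i:=\alpha_i$ to obtain a superset $Y\supseteq X$ and WPO's $\bfq_1,\dots,\bfq_n$ on $Y$ restricting to the $\bfa_i'$ and with $\ell(\bfq_i)=\alpha_i$, for which the lemma also records $\ell(\bfa_1'\sqcap\dots\sqcap\bfa_n')\leq\ell(\bfq_1\sqcap\dots\sqcap\bfq_n)$. One small caveat: if $\alpha_i'=0$ (so the conditions are vacuous) or if some of the data are empty the argument is trivial, and one should note that the case where the common cardinality $\kappa'$ of the $\alpha_i'$ differs from $\kappa$ forces $\varphi_P^+(\alpha_1',\dots,\alpha_n')=0$, so the inequality holds trivially; thus we may assume $\kappa'=\kappa$, which is the hypothesis under which Lemma~\ref{lem:increase-varphi} applies.

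For the linear version $\varphi_L^+$, I would run the identical argument but invoke the final sentence of Lemma~\ref{lem:increase-varphi}: if the $\bfa_i'$ are chosen to be well orders, the $\bfq_i$ can be taken to be well orders as well, with $\tau(\bfq_i)=\alpha_i$, so the same domination of defining sets gives $\varphi_L^+(\alpha_1',\dots,\alpha_n')\leq\varphi_L^+(\alpha_1,\dots,\alpha_n)$.

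I do not anticipate a genuine obstacle here; the only thing to be careful about is the bookkeeping of ``least strict upper bound'' monotonicity, namely that if every element of a set $A$ is $\leq$ some element of a set $B$ then $\sup^+A\leq\sup^+B$ — which is immediate from $\sup^+A=\sup\{\gamma+1:\gamma\in A\}$ — together with handling the degenerate cases (empty families, mismatched cardinalities, zero arguments) so that the statement is literally correct as written. The substantive content has been entirely front-loaded into Lemma~\ref{lem:increase-varphi}, and this lemma is essentially its packaging for use in the reduction to $n=2$.
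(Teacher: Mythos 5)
Your main argument is exactly the paper's intended one: the lemma is stated in the paper as an immediate consequence of Lemma~\ref{lem:increase-varphi}, obtained by extending any family realizing the primed lengths to a family realizing the unprimed ones and using the monotonicity of $\sup^+$ under domination of the defining sets (with the final clause of Lemma~\ref{lem:increase-varphi} covering the linear case). So the substance of your proof is fine.

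There is, however, one false step in your treatment of the degenerate case. You dispose of $\kappa'\neq\kappa$ by asserting that this ``forces $\varphi_P^+(\alpha_1',\dots,\alpha_n')=0$''. That is not so: $\varphi_P^+$ vanishes only when its \emph{own} arguments fail to be pairwise equipotent, and here the $\alpha_i'$ are by hypothesis equipotent among themselves (of cardinality $\kappa'$), so $\varphi_P^+(\alpha_1',\dots,\alpha_n')>\max_i\alpha_i'\geq\kappa'>0$. The case $\kappa'<\kappa$ is genuinely allowed by the statement (and is needed for Corollary~\ref{cor:plus-tilde}), and since Lemma~\ref{lem:increase-varphi} requires the target ordinals $\beta_i$ to have the same cardinality as the vertex set, it cannot be invoked there; so the case does need a separate (if easy) argument. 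The repair is a cardinality count: by the de Jongh--Parikh bound $\ell(\bfa_1'\sqcap\dots\sqcap\bfa_n')\leq\bigotimes_i\alpha_i'$, one gets $\varphi_P^+(\alpha_1',\dots,\alpha_n')\leq\bigotimes_i\alpha_i'+1<(\kappa')^+\leq\kappa$, while $\varphi_P^+(\alpha_1,\dots,\alpha_n)>\max_i\alpha_i\geq\kappa$ because each $\bfa_j$ is a linear extension of $\bfa_1\sqcap\dots\sqcap\bfa_n$; the same count works for $\varphi_L^+$. With that substitution your proof is complete and coincides with the paper's.
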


\begin{notat}
Given a finitary operation $\phi$ on ordinals, let $[\phi]^+$ and $[\phi]^\sim$ denote the following two operations~:
\[
[\phi]^+(\alpha_1,\dots,\alpha_n)=
\begin{cases}
\widetilde\phi(\alpha_1+1,\dots,\alpha_n+1)
&\text{if the $\alpha_i$'s are equipotent}
\\
0
&\text{otherwise}
\end{cases}
\]
and~:
\begin{align*}
[\phi]^\sim(\alpha_1,\dots,\alpha_n)
&=\sup^+\{\alpha:\exists \alpha_1'<\alpha_1,\dots,\alpha_n'<\alpha_n\ \alpha<\phi(\alpha_1',\dots,\alpha_n')\}
\\
&=\sup\{\phi(\alpha_1',\dots,\alpha_n'):\alpha_1'<\alpha_1,\dots,\alpha_n'<\alpha_n\}.
\end{align*}
\end{notat}

\begin{cor}\label{cor:plus-tilde}
Each operation $\varphi^+_P$ and $\widetilde\varphi_P$, resp. $\varphi^+_L$ and $\widetilde\varphi_L$,  can be recovered from the other one as follows~: 
$\varphi^+_P=[\widetilde\varphi_P]^+$, $\varphi^+_L=[\widetilde\varphi_L]^+$, 
$\widetilde\varphi_P=[\varphi^+_P]^\sim$ and $\widetilde\varphi_L=[\varphi^+_L]^\sim$.
\end{cor}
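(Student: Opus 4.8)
The statement to prove is Corollary~\ref{cor:plus-tilde}, which asserts that the ``plus'' operations and the ``tilde'' operations are interdefinable via the syntactic transformations $[\cdot]^+$ and $[\cdot]^\sim$. The plan is to verify the four claimed identities essentially by unwinding the definitions, the only real content being a bijective correspondence between the families of posets indexing the two suprema.

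\medskip

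\noindent\emph{Approach.} For the identities $\varphi^+_P=[\widetilde\varphi_P]^+$ and $\varphi^+_L=[\widetilde\varphi_L]^+$, I would argue as follows. Fix equipotent ordinals $\alpha_1,\dots,\alpha_n$ of common infinite cardinality $\kappa$ (the non-equipotent case is trivial, both sides being $0$, and the finite case is excluded). By definition $[\widetilde\varphi_P]^+(\alpha_1,\dots,\alpha_n)=\widetilde\varphi_P(\alpha_1+1,\dots,\alpha_n+1)$, which is $\sup^+$ of $\ell(\bfa_1\sqcap\dots\sqcap\bfa_n)$ over WPO's $\bfa_i$ with $\ell(\bfa_i)<\alpha_i+1$, \ie\ $\ell(\bfa_i)\leq\alpha_i$. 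So I must show this equals $\varphi^+_P(\alpha_1,\dots,\alpha_n)=\sup^+$ of the same quantity over WPO's with $\ell(\bfa_i)=\alpha_i$ \emph{exactly}. The inequality $\varphi^+_P\leq[\widetilde\varphi_P]^+$ is immediate since the indexing family for $\varphi^+_P$ is a subfamily. For the reverse, given WPO's $\bfa_i$ on a common vertex set $X$ with $\ell(\bfa_i)\leq\alpha_i$, apply Lemma~\ref{lem:increase-varphi} (with $\beta_i:=\alpha_i$) to obtain WPO's $\bfq_i$ on a superset $Y$ with $\bfq_i\restriction X=\bfa_i$ and $\ell(\bfq_i)=\alpha_i$, and with $\ell(\bfa_1\sqcap\dots\sqcap\bfa_n)\leq\ell(\bfq_1\sqcap\dots\sqcap\bfq_n)$; since the latter appears in the family defining $\varphi^+_P$, we get $\ell(\bfa_1\sqcap\dots\sqcap\bfa_n)<\varphi^+_P(\alpha_1,\dots,\alpha_n)$, and taking $\sup^+$ yields $[\widetilde\varphi_P]^+\leq\varphi^+_P$. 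The linear case is identical, using the final clause of Lemma~\ref{lem:increase-varphi} (if the $\bfa_i$'s are well orders, the $\bfq_i$'s can be chosen to be well orders) and noting $\tau(\bfa_i)\leq\alpha_i$ for WLO's amounts to $\tau(\bfa_i)<\alpha_i+1$.

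\medskip

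\noindent\emph{The tilde identities.} For $\widetilde\varphi_P=[\varphi^+_P]^\sim$ and $\widetilde\varphi_L=[\varphi^+_L]^\sim$, I would unwind $[\varphi^+_P]^\sim(\alpha_1,\dots,\alpha_n)=\sup\{\varphi^+_P(\alpha_1',\dots,\alpha_n'):\alpha_1'<\alpha_1,\dots,\alpha_n'<\alpha_n\}$. Plugging in $\varphi^+_P(\alpha_1',\dots,\alpha_n')=\sup^+\{\ell(\bfa_1\sqcap\dots\sqcap\bfa_n):\ell(\bfa_i)=\alpha_i'\}$ and using $\sup\sup^+=\sup^+$ over the union of families, this is $\sup^+\{\ell(\bfa_1\sqcap\dots\sqcap\bfa_n):\exists\,\alpha_i'<\alpha_i\ \ell(\bfa_i)=\alpha_i'\}=\sup^+\{\ell(\bfa_1\sqcap\dots\sqcap\bfa_n):\ell(\bfa_i)<\alpha_i\}$, which is exactly $\widetilde\varphi_P(\alpha_1,\dots,\alpha_n)$. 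One technical point: the non-equipotent values of $\varphi^+_P$ contribute only $0$, so they do not affect the supremum; and when some $\alpha_i=0$ the indexing set is empty and both sides are $0$, consistent with the conventions $\sup^+\varnothing=\sup\varnothing=0$. The linear case is verbatim the same with WLO's and $\tau$ in place of WPO's and $\ell$. No step here is a genuine obstacle: the content is purely the commutation of $\sup$ with $\sup^+$ and the observation that $\{\beta:\beta=\alpha' \text{ for some } \alpha'<\alpha\}$ is cofinal-equivalent to $\{\beta:\beta<\alpha\}$.

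\medskip

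\noindent\emph{Main difficulty.} There is no deep obstacle; the corollary is bookkeeping. The one place demanding care is the reverse inequality $[\widetilde\varphi_P]^+\leq\varphi^+_P$ (and its linear analogue): it genuinely uses the extension Lemma~\ref{lem:increase-varphi} to promote WPO's of length $\leq\alpha_i$ to WPO's of length exactly $\alpha_i$ without decreasing the length of the intersection — which is why that lemma was proved beforehand. I would make sure to invoke it with all $\beta_i$ equal to the corresponding $\alpha_i$ simultaneously (the lemma allows this, its proof being by $n$ successive single-coordinate applications), and to record that the promotion preserves the ``all-linear'' hypothesis when needed.
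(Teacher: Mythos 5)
Your proof is correct and follows exactly the route the paper intends: the paper states this corollary without proof, as an immediate consequence of the definitions together with the monotonicity supplied by Lemma~\ref{lem:increase-varphi} (via Lemma~\ref{lem:monotony-restr}), and your unwinding of $\sup$ against $\sup^+$ for the tilde identities and your promotion of lengths $\leq\alpha_i$ to lengths $=\alpha_i$ for the plus identities is precisely that argument. One small point to watch: Lemma~\ref{lem:increase-varphi} as stated requires the target lengths $\beta_i$ to be equipotent with the common vertex set $X$, so for the tuples in the family defining $\widetilde\varphi_P(\alpha_1+1,\dots,\alpha_n+1)$ whose vertex set has cardinality strictly less than $\kappa$ you should either observe that their intersection then has length less than $\kappa$, hence less than $\varphi_P^+(\alpha_1,\dots,\alpha_n)$ in any case, or invoke Lemma~\ref{lem:monotony-restr}, whose statement explicitly allows the two tuples of ordinals to have different cardinalities.
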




\begin{cor} 
For each $n$,   
$\varphi^+_{P,n}=\varphi^+_{L,n}$  is equivalent to
$\widetilde\varphi_{P,n}= \widetilde\varphi_{L,n}$. 
\end{cor}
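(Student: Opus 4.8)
The plan is to read this equivalence straight off Corollary~\ref{cor:plus-tilde}. The first step is to record the elementary fact that the two passages $\phi\mapsto[\phi]^+$ and $\phi\mapsto[\phi]^\sim$ are honest operators on the class of $n$-ary operations on ordinals: for a fixed arity $n$, the value of $[\phi]^+$ (respectively $[\phi]^\sim$) at any tuple $(\alpha_1,\dots,\alpha_n)$ is obtained by evaluating $\phi$ — or rather $\widetilde\phi$, itself built from $\phi$ — at finitely many explicitly described tuples, so $[\phi]^+$ and $[\phi]^\sim$ depend on $\phi$ alone; in particular $\phi=\psi$ forces $[\phi]^+=[\psi]^+$ and $[\phi]^\sim=[\psi]^\sim$. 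Moreover both transformations preserve arity, so restricting to $n$ variables commutes with them, and Corollary~\ref{cor:plus-tilde}, stated for the full operations, specializes to $\varphi^+_{P,n}=[\widetilde\varphi_{P,n}]^+$, $\varphi^+_{L,n}=[\widetilde\varphi_{L,n}]^+$, $\widetilde\varphi_{P,n}=[\varphi^+_{P,n}]^\sim$, and $\widetilde\varphi_{L,n}=[\varphi^+_{L,n}]^\sim$.

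Granting this, the two implications are one line each. Assuming $\widetilde\varphi_{P,n}=\widetilde\varphi_{L,n}$ and applying $[\,\cdot\,]^+$ to both sides yields $\varphi^+_{P,n}=[\widetilde\varphi_{P,n}]^+=[\widetilde\varphi_{L,n}]^+=\varphi^+_{L,n}$. Conversely, assuming $\varphi^+_{P,n}=\varphi^+_{L,n}$ and applying $[\,\cdot\,]^\sim$ yields $\widetilde\varphi_{P,n}=[\varphi^+_{P,n}]^\sim=[\varphi^+_{L,n}]^\sim=\widetilde\varphi_{L,n}$. That is all that is required.

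I do not anticipate any genuine obstacle: the substance has already been isolated in Corollary~\ref{cor:plus-tilde}, and what remains is the purely formal observation that equality of operations is transported along a well-defined operator. The one point that calls for a sentence of care is the arity bookkeeping — checking that the "full" form of Corollary~\ref{cor:plus-tilde} really does restrict to the $n$-ary components, which holds because $[\,\cdot\,]^+$ and $[\,\cdot\,]^\sim$ act separately on each arity. The purpose of this corollary in the paper then becomes transparent: it lets the reduction of Section~\ref{section:reduction} transfer the case $n=2$ of Theorem~\ref{lem:main}, namely $\varphi^+_{P,2}=\varphi^+_{L,2}$, to the associative operations as $\widetilde\varphi_{P,2}=\widetilde\varphi_{L,2}$, extend that equality by associativity to every $n$, and then transfer it back to $\varphi^+_{P,n}=\varphi^+_{L,n}$.
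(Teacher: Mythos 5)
Your proof is correct and is exactly the argument the paper intends: the corollary is an immediate consequence of Corollary~\ref{cor:plus-tilde} (the paper offers no further proof), obtained by applying the arity-preserving operators $[\,\cdot\,]^+$ and $[\,\cdot\,]^\sim$ to the hypothesized equality in each direction. Your extra remarks on well-definedness and arity bookkeeping are harmless and accurate.
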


Let us now come to the associtivity relations.

\begin{lem}\label{lem:associativity}
For any positive integers $m$ and $n$~:
\begin{equation}
\label{ineq:varphiplusPmn}
\widetilde\varphi_P(\alpha_1,\dots,\alpha_m,\beta_1,\dots,\beta_n)
\leq
\widetilde\varphi_P(
\widetilde\varphi_P(\alpha_1,\dots,\alpha_m),
\widetilde\varphi_P(\beta_1,\dots,\beta_n)
)
\end{equation}
and, dually~:
\begin{equation}
\label{ineq:varphiplusLmn}
\widetilde\varphi_L(\alpha_1,\dots,\alpha_m,\beta_1,\dots,\beta_n)
\geq
\widetilde\varphi_L(
\widetilde\varphi_L(\alpha_1,\dots,\alpha_m),
\widetilde\varphi_L(\beta_1,\dots,\beta_n)
).
\end{equation}
\end{lem}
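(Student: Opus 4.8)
The plan is to prove the two inequalities~\eqref{ineq:varphiplusPmn} and~\eqref{ineq:varphiplusLmn} by essentially the same argument, the $L$-case being the formal dual (reversing the directions of the inequalities, replacing suprema of achievable lengths from below by the analogous quantities, and using the linearity-preserving clauses of Lemma~\ref{lem:increase-varphi}). I will present the $P$-case~\eqref{ineq:varphiplusPmn} in detail and indicate the dualization.

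For~\eqref{ineq:varphiplusPmn}, fix WPO's $\bfa_1,\dots,\bfa_m,\bfb_1,\dots,\bfb_n$ on a common vertex set $X$ with $\ell(\bfa_i)<\alpha_i$ and $\ell(\bfb_j)<\beta_j$, and set $\bfa:=\bfa_1\sqcap\dots\sqcap\bfa_m$, $\bfb:=\bfb_1\sqcap\dots\sqcap\bfb_n$, so that the intersection of all $m+n$ posets equals $\bfa\sqcap\bfb$; we must bound $\ell(\bfa\sqcap\bfb)$. The point is that $\ell(\bfa)<\widetilde\varphi_P(\alpha_1,\dots,\alpha_m)$ and $\ell(\bfb)<\widetilde\varphi_P(\beta_1,\dots,\beta_n)$ by the very definition of $\widetilde\varphi_P$ (as the \emph{strict} upper bound $\sup^+$ over all such achievable lengths). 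Now $\bfa$ is a WPO of length $\ell(\bfa)$, which — by the definition of $\widetilde\varphi_P$ applied with the single value $\ell(\bfa)+1$, or more simply by the triviality $\widetilde\varphi_P(\gamma)=\gamma$ together with the monotonicity that comes out of Lemma~\ref{lem:increase-varphi} — can be realized as a genuine $P$-intersection expression. Concretely: $\bfa\sqcap\bfb$ is the intersection of the two WPO's $\bfa$ and $\bfb$, with $\ell(\bfa)<\widetilde\varphi_P(\alpha_1,\dots,\alpha_m)$ and $\ell(\bfb)<\widetilde\varphi_P(\beta_1,\dots,\beta_n)$; hence by definition of $\widetilde\varphi_P$ in two variables, $\ell(\bfa\sqcap\bfb)<\widetilde\varphi_P(\widetilde\varphi_P(\alpha_1,\dots,\alpha_m),\widetilde\varphi_P(\beta_1,\dots,\beta_n))$. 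Taking the $\sup^+$ over all admissible choices of the $\bfa_i$'s and $\bfb_j$'s yields exactly~\eqref{ineq:varphiplusPmn}.

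The one subtlety to handle carefully is the cardinality bookkeeping: $\widetilde\varphi_P$ vanishes on non-equipotent tuples, so when I invoke the two-variable $\widetilde\varphi_P$ on the pair $(\widetilde\varphi_P(\alpha_1,\dots,\alpha_m),\widetilde\varphi_P(\beta_1,\dots,\beta_n))$ I should check that these two ordinals are equipotent (equal cardinality $\kappa$) in the only case that matters, namely when the original tuple $(\alpha_1,\dots,\alpha_m,\beta_1,\dots,\beta_n)$ is equipotent of infinite cardinality $\kappa$; when it is not, the left-hand side is $0$ and there is nothing to prove. That $\widetilde\varphi_P$ preserves the cardinality $\kappa$ on an equipotent $\kappa$-tuple is clear since the intersection of WPO's on a $\kappa$-set has length of cardinality $\kappa$, and the $\sup^+$ of ordinals of cardinality $\le\kappa$ realized on a $\kappa$-set has cardinality $\le\kappa$ (indeed exactly $\kappa$, using Lemma~\ref{lem:increase-varphi} to push each $\bfa_i$ up to length a bit below $\alpha_i$ with full $\kappa$-sized domain). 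I expect this cardinality check to be the only place requiring a line of genuine verification; the rest is just unwinding definitions.

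For~\eqref{ineq:varphiplusLmn}, the inequality goes the other way, which is why it reads as a \emph{lower} bound. Here the natural move is reversed: given WLO's $\bfa_1,\dots,\bfa_m$ on $X$ and $\bfb_1,\dots,\bfb_n$ on $X$ with intersections $\bfa$, $\bfb$ of types achieving values close to $\widetilde\varphi_L(\alpha_1,\dots,\alpha_m)$ and $\widetilde\varphi_L(\beta_1,\dots,\beta_n)$ respectively, one wants to realize $\bfa$ and $\bfb$ themselves as intersections of \emph{linear} orders and then combine. But $\bfa$ and $\bfb$ need not be linear, so this direction is not a pure definition-chase: one must instead argue at the level of the $\sup^+$'s. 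The clean way is to show $\widetilde\varphi_L(\widetilde\varphi_L(\alpha_1,\dots,\alpha_m),\widetilde\varphi_L(\beta_1,\dots,\beta_n))\le\widetilde\varphi_L(\alpha_1,\dots,\alpha_m,\beta_1,\dots,\beta_n)$ by taking any $\gamma'<\widetilde\varphi_L(\alpha_1,\dots,\alpha_m)$ and $\delta'<\widetilde\varphi_L(\beta_1,\dots,\beta_n)$, choosing WLO's realizing $\gamma'$ as $\tau(\bfa_1\sqcap\dots\sqcap\bfa_m)$-or-more and $\delta'$ similarly — here using that $\widetilde\varphi_L$ is a $\sup^+$ so such witnesses exist — then observing that for \emph{any} WLO $\bfa$ of type $\ge\gamma'$, $\ell(\bfa)=\tau(\bfa)$ and $\bfa$ trivially equals the one-fold intersection of itself, so one can splice: intersecting $\bfa_1,\dots,\bfa_m,\bfb_1,\dots,\bfb_n$ all on the same vertex set (after using Lemma~\ref{lem:increase-varphi} to put them on a common $\kappa$-set with the stated length constraints preserved and linearity preserved) gives a poset whose length is $<\widetilde\varphi_L(\alpha_1,\dots,\alpha_m,\beta_1,\dots,\beta_n)$ and is $\ge$ any value forced by the two-stage process. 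I expect the dual direction to be the more delicate of the two — the asymmetry between "$\bfa$ is linear" and "$\bfa$ is an intersection of linears" is exactly the phenomenon that forces the inequalities in~\eqref{ineq:varphiplusPmn}--\eqref{ineq:varphiplusLmn} to point in opposite directions, and getting the lower bound right will require care in choosing the common vertex set and in checking that intersecting the full family cannot lose length compared to the two-stage intersection.
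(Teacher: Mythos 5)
Your proof of the first inequality~\eqref{ineq:varphiplusPmn} is correct and is exactly the paper's argument: regroup $\bfa_1\sqcap\dots\sqcap\bfa_m\sqcap\bfb_1\sqcap\dots\sqcap\bfb_n$ as $(\bfa_1\sqcap\dots\sqcap\bfa_m)\sqcap(\bfb_1\sqcap\dots\sqcap\bfb_n)$ and apply the two-variable definition. One small correction: it is $\varphi^+_P$, not $\widetilde\varphi_P$, that vanishes on non-equipotent tuples; $\widetilde\varphi_P$ is a $\sup^+$ over \emph{strictly} smaller lengths and is never artificially zero, so your cardinality detour is both unnecessary and slightly misstated --- but since the regrouping argument works in all cases, no harm is done there.

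For~\eqref{ineq:varphiplusLmn} you correctly diagnose the obstruction (the partial intersections $\bfa_1\sqcap\dots\sqcap\bfa_m$ are no longer linear, so the regrouping cannot be reversed) and you state the right target: show that $\widetilde\varphi_L(\alpha_1,\dots,\alpha_m,\beta_1,\dots,\beta_n)$ is a strict upper bound of $\ell(\bfa\sqcap\bfb)$ for \emph{every} pair of equipotent well orders $\bfa$, $\bfb$ on a common vertex set with $\tau(\bfa)<\widetilde\varphi_L(\alpha_1,\dots,\alpha_m)$ and $\tau(\bfb)<\widetilde\varphi_L(\beta_1,\dots,\beta_n)$. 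But the step you defer as ``requiring care'' is a genuine lemma, not bookkeeping --- it is Lemma~\ref{lem:trouverunnom} of the paper --- and your sketch does not supply it. What is needed is: given such an $\bfa$, since $\tau(\bfa)$ is not a strict upper bound of the set defining $\widetilde\varphi_L(\alpha_1,\dots,\alpha_m)$, there are well orders $\bfa_i'$ on some set $X'$ with $\tau(\bfa_i')<\alpha_i$ and $\ell(\bfa_1'\sqcap\dots\sqcap\bfa_m')\geq\tau(\bfa)$; one takes a linear extension of that intersection, restricts to a subset of type exactly $\tau(\bfa)$, and transports the $\bfa_i'$ along the resulting isomorphism onto the vertex set of $\bfa$, obtaining well orders $\bfa_i$ with $\tau(\bfa_i)<\alpha_i$ and $\bfa_1\sqcap\dots\sqcap\bfa_m\sqsubseteq\bfa$; likewise for $\bfb$. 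Then $\bfa\sqcap\bfb$ is an edge-extension of $\bfa_1\sqcap\dots\sqcap\bfa_m\sqcap\bfb_1\sqcap\dots\sqcap\bfb_n$, and since edge-extensions can only decrease the length (\cf~\eqref{eq:restriction}), $\ell(\bfa\sqcap\bfb)$ is bounded by the length of the $(m+n)$-fold intersection, hence is less than $\widetilde\varphi_L(\alpha_1,\dots,\alpha_m,\beta_1,\dots,\beta_n)$. Your version only produces witnessing families on their own vertex set and never carries out the transport onto the vertex set of an \emph{arbitrary} admissible $\bfa$, nor the observation that the transported intersection is refined by $\bfa$; without that, you have no comparison between $\ell(\bfa\sqcap\bfb)$ and the length of any $(m+n)$-fold intersection, and the lower bound does not follow.
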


\begin{proof}
For Inequality~\eqref{ineq:varphiplusPmn}, just write each $\bfp_1\sqcap\dots\sqcap\bfp_m\sqcap\bfq_1\sqcap\dots\sqcap\bfq_n$ as 
$(\bfp_1\sqcap\dots\sqcap\bfp_m)\sqcap(\bfq_1\sqcap\dots\sqcap\bfq_n)$.

Let us then handle Inequality~\eqref{ineq:varphiplusLmn}.
Consider the case of equipotent $\alpha_i$'s and $\beta_j$'s.
Observe that, according to Lemma~\ref{lem:trouverunnom} below~:
\[
\ell(\bfa\sqcap\bfb)\leq
\ell(
\underbrace{\bfa_1\sqcap\dots\sqcap\bfa_m\sqcap\bfb_1\sqcap\dots\sqcap\bfb_n}_{\sqsubseteq\bfa\sqcap\bfb}
)
<
\widetilde\varphi_L(\alpha_1,\dots,\alpha_m,\beta_1,\dots,\beta_n).
\]
Thus $\widetilde\varphi_L(\alpha_1,\dots,\alpha_m,\beta_1,\dots,\beta_n)$ is a strict upper bound of the lengths of the intersections of two well orderings of types less than 
$\widetilde\varphi_L(\alpha_1,\dots,\alpha_m)$ and 
$\widetilde\varphi_L(\beta_1,\dots,\beta_n)$ respectively, and therefore it is greater than or equal to 
$\widetilde\varphi_L(
\widetilde\varphi_L(\alpha_1,\dots,\alpha_m),
\widetilde\varphi_L(\beta_1,\dots,\beta_n)
)$, that is the least of these strict upper bounds.
%
\end{proof}

\begin{lem}\label{lem:trouverunnom}
For any two equipotent well orders $\bfa$ and $\bfb$ with the same vertex set $X$ and of respective types~:
\[
\tau(\bfa)=\alpha<\widetilde\varphi_L(\alpha_1,\dots,\alpha_m)
\text{~and }
\tau(\bfb)=\beta<\widetilde\varphi_L(\beta_1,\dots,\beta_n)
\]
there are   well orders $\bfa_i$'s and $\bfb_j$'s on $X$ such that~:
\[
\tau(\bfa_i)<\alpha_i
\text{~and }
\tau(\bfb_j)<\beta_j
\]
and 
\[
\bfa_1\sqcap\dots\sqcap\bfa_m\sqsubseteq\bfa
\text{~and }
\bfb_1\sqcap\dots\sqcap\bfb_n\sqsubseteq\bfb.
\]
\end{lem}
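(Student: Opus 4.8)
The plan is to prove the two halves of the statement — extracting the well orders $\bfa_i$ from $\bfa$, and the $\bfb_j$ from $\bfb$ — by one and the same argument, carried out independently on each side; so I only describe how to produce the $\bfa_i$'s, the $\bfb_j$'s being obtained verbatim from $\bfb$.

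First I would unfold the hypothesis $\tau(\bfa)=\alpha<\widetilde\varphi_L(\alpha_1,\dots,\alpha_m)$. By definition the right-hand side is the least strict upper bound of the lengths $\ell(\mathbf{c}_1\sqcap\dots\sqcap\mathbf{c}_m)$ over all families $\mathbf{c}_1,\dots,\mathbf{c}_m$ of well orders on a common vertex set with $\tau(\mathbf{c}_i)<\alpha_i$. Hence there is at least one such family, say on a set $Y$, with $\ell(\bfr)\ge\alpha$, where $\bfr:=\mathbf{c}_1\sqcap\dots\sqcap\mathbf{c}_m$; note that $\bfr$ is a WPO, being embeddable into $\prod_i\mathbf{c}_i$.

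Since $\bfr$ is a WPO of length at least $\alpha$, the attainment of the length (de Jongh--Parikh) supplies a linear extension $\mathbf{e}$ of $\bfr$ with $\tau(\mathbf{e})=\ell(\bfr)\ge\alpha$, and $\mathbf{e}$ is a well order. Let $Y_0$ be the initial segment of $\mathbf{e}$ of order type exactly $\alpha$, and fix an isomorphism $h\colon\bfa\to\mathbf{e}\restriction Y_0$. I would then pull each $\mathbf{c}_i$ back along $h$, letting $\bfa_i$ be the ordering on $X$ for which $x\le_{\bfa_i}x'\Leftrightarrow h(x)\le_{\mathbf{c}_i}h(x')$; in other words $\bfa_i$ is the isomorphic copy of $\mathbf{c}_i\restriction Y_0$ transported by $h^{-1}$. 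As $\mathbf{c}_i\restriction Y_0$ is a restriction of the linear well order $\mathbf{c}_i$, it is itself a linear well order, of type at most $\tau(\mathbf{c}_i)<\alpha_i$; hence $\bfa_i$ is a well order with $\tau(\bfa_i)<\alpha_i$, as wanted. Finally, for $x,x'\in X$, having $x\le_{\bfa_i}x'$ for every $i$ means $h(x)\le_{\mathbf{c}_i}h(x')$ for every $i$, i.e. $h(x)\le_{\bfr}h(x')$; since $\mathbf{e}$ extends $\bfr$ and $h$ is an isomorphism of $\bfa$ onto $\mathbf{e}\restriction Y_0$, this forces $x\le_\bfa x'$. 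Thus $\bfa_1\sqcap\dots\sqcap\bfa_m\sqsubseteq\bfa$, and the $\bfb_j$'s are produced by the same construction applied to $\bfb$ and a family witnessing $\tau(\bfb)<\widetilde\varphi_L(\beta_1,\dots,\beta_n)$.

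The step I expect to be the real point is the choice of template in the third paragraph. The tempting shortcut — order-embed $\bfa$ directly into $\bfr$ and pull back — fails, because a WPO of length $\alpha$ need not contain a chain of order type $\alpha$. The remedy is to embed $\bfa$ not into $\bfr$ itself but onto an initial segment of a linear \emph{extension} $\mathbf{e}$ of $\bfr$, and to pull the $\mathbf{c}_i$'s back along that embedding; this works because a restriction of a linear well order is again a linear well order of no larger type, and because intersection of orderings commutes with restriction to a common subset. Everything else is routine bookkeeping.
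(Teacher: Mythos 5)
Your proof is correct and follows essentially the same route as the paper's: unwind the definition of $\widetilde\varphi_L$ as a least strict upper bound to get a witnessing family $\mathbf{c}_1,\dots,\mathbf{c}_m$ with $\ell(\mathbf{c}_1\sqcap\dots\sqcap\mathbf{c}_m)\geq\alpha$, pass to a linear extension, cut down to a subset of type $\alpha$, and transport the restricted well orders back to $X$ along the resulting isomorphism with $\bfa$. The only (immaterial) difference is that you take the initial segment of type $\alpha$ of the linear extension where the paper takes an arbitrary subset of that type.
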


\begin{proof}
Since $\alpha$ is not a strict upper bound of the set of which $\widetilde\varphi_L(\alpha_1,\dots,\alpha_m)$ is defined to be the least strict upper bound, it is less than or equal to an element of that set.
So there are well orders $\bfa_i'$'s on a common vertex set $X'$ with $\tau(\bfa_i')<\alpha_i$ and
such that $\ell(\bfa_1'\sqcap\dots\sqcap\bfa_m')\geq\alpha$.
Then consider a linear extension $\bfa'$ of $\bfa_1'\sqcap\dots\sqcap\bfa_m'$ of type at least $\alpha$, and 
consider $X''\subseteq X'$ such that $\tau(\bfa'\restriction X'')=\alpha$.
Then consider the bijection $f:X''\to X$ that is an isomophism from $\bfa'\restriction X''$ to $\bfa$,
and let each $\bfa_i$ be the well order 
on $X$ onto which $f$ is an isomorphism from $\bfa_i'\restriction X''$.
Incidentally note that indeed $\bfa_1\sqcap\dots\sqcap\bfa_m=\bfa$.

Perform likewise \wrt\ $\bfb$.
\end{proof}

From Lemma~\ref{lem:associativity} a straightforward induction yields~: 

\begin{cor}  
If $\widetilde \varphi_{P,2}= \widetilde \varphi_{L,2}$  then  
$\widetilde \varphi_{P,n}=\widetilde \varphi_{L,n}$ for every integer $n\geq 2$. 
\end{cor}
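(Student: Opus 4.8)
The plan is a straightforward induction on $n$, the base case $n=2$ being exactly the hypothesis of the corollary. For the inductive step I would fix $n\geq 3$ and assume that $\widetilde\varphi_{P,m}=\widetilde\varphi_{L,m}$ holds for every $m$ with $2\leq m<n$; the case $m=1$ is also at hand, being the trivial identity $\widetilde\varphi_{P,1}=\widetilde\varphi_{L,1}=\id$. Since $\widetilde\varphi_{L}\leq\widetilde\varphi_{P}$ in each arity, only the reverse inequality $\widetilde\varphi_{P,n}\leq\widetilde\varphi_{L,n}$ requires an argument.

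The mechanism is to split the $n$ arguments as $(n-1)+1$ and feed this splitting into Lemma~\ref{lem:associativity}, taking $m:=n-1$ there and letting the second block be the single ordinal $\alpha_n$. Inequality~\eqref{ineq:varphiplusPmn} then reads $\widetilde\varphi_{P,n}(\alpha_1,\dots,\alpha_n)\leq\widetilde\varphi_{P,2}\bigl(\widetilde\varphi_{P,n-1}(\alpha_1,\dots,\alpha_{n-1}),\alpha_n\bigr)$, while the dual inequality~\eqref{ineq:varphiplusLmn} reads $\widetilde\varphi_{L,2}\bigl(\widetilde\varphi_{L,n-1}(\alpha_1,\dots,\alpha_{n-1}),\alpha_n\bigr)\leq\widetilde\varphi_{L,n}(\alpha_1,\dots,\alpha_n)$. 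The induction hypothesis makes the two $(n-1)$-ary inner terms equal, and the hypothesis of the corollary makes $\widetilde\varphi_{P,2}$ and $\widetilde\varphi_{L,2}$ equal as operations; hence the two middle expressions coincide, and concatenating the two inequalities yields $\widetilde\varphi_{P,n}\leq\widetilde\varphi_{L,n}$, as wanted. The crucial feature here is that the two inequalities of Lemma~\ref{lem:associativity} point in opposite directions --- the $\widetilde\varphi_P$ one upward, the $\widetilde\varphi_L$ one downward --- which is precisely what lets the common middle value be squeezed out.

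I do not expect a serious obstacle; the only bookkeeping worth a remark, and the place where one must be mildly careful, is the reduction to the case in which all the arguments are equipotent, since that is the case in which Lemma~\ref{lem:associativity} was actually established. When $\alpha_1,\dots,\alpha_n$ are not all of the same cardinality, both $\widetilde\varphi_{P,n}(\alpha_1,\dots,\alpha_n)$ and $\widetilde\varphi_{L,n}(\alpha_1,\dots,\alpha_n)$ collapse to one and the same explicit value, governed by $\min_i|\alpha_i|$ and the least $\alpha_i$ realising it: indeed a common vertex set of $n$ WPO's $\bfa_1,\dots,\bfa_n$ with $\ell(\bfa_i)<\alpha_i$ has cardinality at most $\min_i|\alpha_i|$ (the length of a poset being equipotent with its vertex set), and any smaller type is realisable by taking the $\bfa_i$ to be identical well orders; so that case needs no induction at all. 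With the equipotent case handled as above, this completes the induction, and hence the proof.
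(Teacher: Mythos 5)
Your induction is exactly the ``straightforward induction'' the paper invokes: split the arguments as $(n-1)+1$, apply inequality~\eqref{ineq:varphiplusPmn} to bound $\widetilde\varphi_{P,n}$ from above by $\widetilde\varphi_{P,2}\bigl(\widetilde\varphi_{P,n-1}(\alpha_1,\dots,\alpha_{n-1}),\alpha_n\bigr)$, apply the dual inequality~\eqref{ineq:varphiplusLmn} to bound $\widetilde\varphi_{L,n}$ from below by the corresponding $L$-expression, and squeeze using the induction hypothesis, the hypothesis of the corollary, and the trivial inequality $\widetilde\varphi_{L,n}\leq\widetilde\varphi_{P,n}$. That mechanism is correct and is what the paper intends.

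The one genuine flaw is your dismissal of the non-equipotent case. It is not true that $\widetilde\varphi_{P,n}(\alpha_1,\dots,\alpha_n)$ and $\widetilde\varphi_{L,n}(\alpha_1,\dots,\alpha_n)$ then collapse to a value governed by $\min_i\alpha_i$ with only identical well orders needed to realise it: the defining supremum ranges over \emph{all} equipotent tuples of lengths strictly below the $\alpha_i$'s componentwise, not only over constant tuples. For instance, with $n=2$, $\alpha_1=\omega_1+1$ and $\alpha_2=\omega_2$ (not equipotent), the pair of types $(\omega_1,\omega_1\multordby\omega_1)$ is admissible, and Corollary~\ref{cor:minor-multiples} yields an intersection of two such well orders of length at least $\omega_1\multordby\omega_1$, which already exceeds $\min_i\alpha_i$; so the ``explicit value'' you describe is wrong and the equality of the two operations in this case is not established by your remark. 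The gap is easily repaired in either of two ways: (i) observe that Lemma~\ref{lem:associativity} is stated, and its proof via Lemma~\ref{lem:trouverunnom} goes through, with no equipotency hypothesis on the $\alpha_i$'s and $\beta_j$'s, so your squeeze applies verbatim to arbitrary tuples; or (ii) note that the lengths of WPO's sharing a vertex set are automatically equipotent, so $\widetilde\varphi_{\cdot,n}(\alpha_1,\dots,\alpha_n)$ equals the supremum of the values $\widetilde\varphi_{\cdot,n}(\gamma_1+1,\dots,\gamma_n+1)$ over equipotent tuples $\gamma_i<\alpha_i$, which reduces the general case to the equipotent one you did handle.
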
 

In order to complete our derivation of the general case from the case $n=2$, it remains to check~: 

\begin{lem} 
If~\eqref{eq:inters-main} in Theorem~\ref{lem:main} holds for $n=2$, then it holds for any $n$.
\end{lem}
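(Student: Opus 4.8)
The plan is to reduce the $n$-variable instance of~\eqref{eq:inters-main} to repeated use of the case $n=2$, using the associativity machinery already set up for the $\widetilde\varphi$'s and the operation $\widetilde\theta$ described in the strategy. First I would observe that, by the equivalence recorded in Corollary~\ref{cor:plus-tilde} (each $\varphi^+$ is $[\widetilde\varphi]^+$ and each $\widetilde\varphi$ is $[\varphi^+]^\sim$), the identity~\eqref{eq:inters-main} for a fixed $n$ is equivalent to the corresponding identity $\widetilde\varphi_{P,n}=\widetilde\varphi_{L,n}=\widetilde\theta_n$, where $\widetilde\theta$ is obtained from $\theta^+$ exactly as $\widetilde\varphi_{P}$ is obtained from $\varphi^+_P$. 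So it suffices to prove the $\widetilde{}$-version for all $n$ from its validity for $n=2$.

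Next I would invoke the three associativity facts promised in Section~\ref{section:reduction}: that $\widetilde\varphi_{P,2}$, $\widetilde\varphi_{L,2}$ and $\widetilde\theta_2$ are each associative, with $\widetilde\varphi_{P,n}$, $\widetilde\varphi_{L,n}$, $\widetilde\theta_n$ being their extensions by associativity. (The associativity of $\widetilde\varphi_{P,2}$ and $\widetilde\varphi_{L,2}$ follows from Lemma~\ref{lem:associativity} together with the trivial reverse inequalities $\widetilde\varphi_P(\widetilde\varphi_P(\dots),\widetilde\varphi_P(\dots))\leq\widetilde\varphi_P(\dots,\dots)$ and its dual, which hold because one can always forget the nesting; associativity of $\widetilde\theta_2$ is a direct ordinal computation with the explicit formula for $\theta^+$, using the distributivity~\eqref{eq:indec-distrib} of $\kappa\multordby(-)$ over $\oplus$ and the fact that $|-|^+$ is unaffected by the $\kappa\multordby q$-part.) Assuming~\eqref{eq:inters-main} for $n=2$, we have $\widetilde\varphi_{P,2}=\widetilde\varphi_{L,2}=\widetilde\theta_2$ as binary operations; since all three $n$-ary operations are the respective associative extensions of these coinciding binary operations, they agree for every $n\geq 2$ by a one-line induction: writing $(\alpha_1,\dots,\alpha_n)$ as $(\alpha_1,\dots,\alpha_{n-1})$ grouped with $(\alpha_n)$,
\[
\widetilde\varphi_{P}(\alpha_1,\dots,\alpha_n)
=\widetilde\varphi_P(\widetilde\varphi_P(\alpha_1,\dots,\alpha_{n-1}),\alpha_n)
=\widetilde\theta(\widetilde\theta(\alpha_1,\dots,\alpha_{n-1}),\alpha_n)
=\widetilde\theta(\alpha_1,\dots,\alpha_n),
\]
using the induction hypothesis in the middle step and associativity on both ends, and similarly for $\widetilde\varphi_L$. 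Transporting back through $[\,\cdot\,]^+$ gives~\eqref{eq:inters-main} for all $n$.

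The one genuinely nontrivial point, and the step I expect to cost the most, is verifying that $\widetilde\theta_2$ is associative and that its associative extension $\widetilde\theta_n$ literally reproduces the right-hand side of~\eqref{eq:inters-main} — i.e.\ that the explicit closed form $\kappa\multordby(q_1\otimes\cdots\otimes q_n)+|r_1+\cdots+r_n|^+$ is what you get by iterating the binary operation $\theta^+_2$ (equivalently $\widetilde\theta_2$). Here the subtlety is that the remainder part after one binary step is $|r_i+r_j|^+$, an \emph{initial} ordinal, whose euclidean quotient by $\kappa$ may be nonzero, so one must check that feeding it back into the next binary step contributes the expected $\otimes$-factor to the quotient and leaves the accumulated remainder governed by $|r_1+\cdots+r_n|^+$; this is a finite ordinal-arithmetic computation relying on $\kappa$ being indecomposable, on~\eqref{eq:indec-distrib}, and on the Hessenberg operations being the ones that make these manipulations coherent. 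Everything else in the argument is formal bookkeeping via the operators $[\,\cdot\,]^+$, $[\,\cdot\,]^\sim$ and the already-established associativity lemmas.
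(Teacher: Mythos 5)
Your skeleton is the paper's: pass to the tilde operations via Corollary~\ref{cor:plus-tilde}, use the associativity relations to reduce to nested binary compositions, and check that iterating the binary operation reproduces the closed form $\theta^+_n$. Two points need repair or completion. First, the inequality you call trivial, $\widetilde\varphi_P(\widetilde\varphi_P(\dots),\widetilde\varphi_P(\dots))\leq\widetilde\varphi_P(\dots,\dots)$, is neither what Lemma~\ref{lem:associativity} provides (for $\widetilde\varphi_P$ it gives the opposite inequality) nor obtainable by ``forgetting the nesting'': it would require, for an arbitrary WPO $\bfa$ whose length is merely bounded by that of some intersection $\bfa_1\sqcap\dots\sqcap\bfa_m$, producing WPO's $\bfa_i'$ on the vertex set of $\bfa$ with $\ell(\bfa_i')<\alpha_i$ and $\bfa_1'\sqcap\dots\sqcap\bfa_m'\sqsubseteq\bfa$, i.e.\ a WPO analogue of Lemma~\ref{lem:trouverunnom}, which is not available. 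Fortunately you do not need it: the two one-sided inequalities of Lemma~\ref{lem:associativity}, together with $\widetilde\varphi_L\leq\widetilde\varphi_P$, the case $n=2$ and the induction hypothesis, sandwich $\widetilde\varphi_{L,n}$ and $\widetilde\varphi_{P,n}$ between two copies of $\widetilde\theta_2(\widetilde\theta_{n-1}(\cdots),\cdot)$, which is exactly how the paper closes the loop.

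Second, the step you defer --- that the associative iterate of $\widetilde\theta_2$ is $\widetilde\theta_n$, equivalently that $[\widetilde\theta_2(\widetilde\theta_n,\id_1)]^+=\theta^+_{n+1}$ --- is the actual content of the paper's proof, and your proposal only describes it. You have correctly located the danger (the remainder term $|r_1+\dots+r_n|^+$ of one binary step is an initial ordinal which can equal $\kappa$ itself, so one cannot blindly substitute the closed form into $\theta^+_2$). The paper's resolution is to compute $\widetilde\theta_2(\theta^+_n(\alpha_1,\dots,\alpha_n),\beta+1)$ as a supremum over the cofinal family of ordinals $\kappa\multordby(q(\alpha_1)\otimes\dots\otimes q(\alpha_n))+\gamma$ with $\gamma$ equipotent to $r(\alpha_1)+\dots+r(\alpha_n)$; each such ordinal has quotient $q(\alpha_1)\otimes\dots\otimes q(\alpha_n)$ and remainder $\gamma$ of cardinality less than $\kappa$, so applying $\theta^+_2$ to it and $\beta$ yields $\kappa\multordby(q(\alpha_1)\otimes\dots\otimes q(\alpha_n)\otimes q(\beta))+|\gamma+r(\beta)|^+=\theta^+_{n+1}(\alpha_1,\dots,\alpha_n,\beta)$, independently of $\gamma$. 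Until this computation is written out, your argument has a hole precisely where you predicted it would.
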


\invis{
Cette preuve est \`a revoir.
En particulier, nous semblions, dans nos notes, implicitement, dire qu'on n'aurait pas \`a faire de v\'erification si les expressions explicites.
V\'erifier ce qu'il en est~!
Semble d\'esormais OK.
}

\begin{proof} 
Consider the finitary operation $\widetilde\theta:=[\theta^+]^\sim$ on ordinals defined from~: 
\[
\theta^+:(\alpha_1,\dots,\alpha_n)\longmapsto
\begin{cases}
\kappa\multordby(q_\kappa(\alpha_1)\otimes\dots\otimes q_\kappa(\alpha_n))+|r_\kappa(\alpha_1)+\dots+r_\kappa(\alpha_n)|^+
&\text{if $|\alpha_1|=\dots=|\alpha_n|$, let $\kappa$}
\\
0
&\text{otherwise}
\end{cases}
\]
($n\geq 2$).
As above $\theta^+=[\widetilde\theta]^+$,
because $\theta^+$ satisfies the monotonicity condition of Lemma~\ref{lem:monotony-restr}. 

To infer $\varphi^+=\theta^+$ from $\varphi_2^+=\theta_2^+$ (by $\varphi^+$ here, we mean $\varphi_R^+$ or $\varphi_L^+$),
it suffices to check that $\widetilde\theta_2$ is associative with $\widetilde\theta_n$ equal to its extension by associativity to $n$ variables.
Given that $\theta^+$ is commutative, 
it suffices to check that $[\widetilde\theta_2(\widetilde\theta_n,\id_1)]^+=\theta_{n+1}^+$,
which we do now.
Below we assume that the arguments are equipotent (written $\alpha_i\sim\beta$) of cardinal $\kappa$,
and we just write $q$ and $r$ for $q_\kappa$ and $r_\kappa$.
\begin{align*}
&[\widetilde\theta_2(\widetilde\theta_n,\id_1)]^+(\alpha_1,\dots,\alpha_n,\beta)
\\ 
&
=\widetilde\theta_2(\widetilde\theta_n(\alpha_1+1,\dots,\alpha_n+1),\beta+1)
\\
&
=\widetilde\theta_2(\theta_n^+(\alpha_1,\dots,\alpha_n),\beta+1)
\\
&
=\sup\{\theta^+_2(\alpha,\beta):\alpha<\theta_n^+(\alpha_1,\dots,\alpha_n)\}
\\
&
=\sup\{\theta^+_2(
\kappa\multordby(q(\alpha_1)\otimes\dots\otimes q(\alpha_n))+\gamma,
\kappa\multordby q(\beta)+r(\beta)
):
\gamma\sim r(\alpha_1)+\dots+r(\alpha_n)
\}
\\
&
=\sup\{
\kappa\multordby((q(\alpha_1)\otimes\dots\otimes q(\alpha_n))\otimes q(\beta))+
|\gamma+r(\beta)|^+:
\gamma\sim r(\alpha_1)+\dots+r(\alpha_n)
\}
\\
&
=\kappa\multordby(q(\alpha_1)\otimes\dots\otimes q(\alpha_n)\otimes q(\beta))+|r(\alpha_1)+\dots+r(\alpha_n)+r(\beta)|^+
\\
&
=\theta^+_{n+1}(\alpha_1,\dots,\alpha_n,\beta).
\end{align*}
\end{proof}

\subsubsection{Strategy for the case $n=2$}

We shall prove~:

\begin{prop}\label{prop:n=2}
Given two equipotent ordinals $\alpha$ and $\beta$, of cardinality $\kappa$~:
\begin{equation}
\label{ineqs:varphiplusPL2}
\varphi^+_P(\alpha,\beta)\leq\kappa\multordby(q_\kappa(\alpha)\otimes q_\kappa(\beta))+|r_\kappa(\alpha)+r_\kappa(\beta)|^+\leq\varphi^+_L(\alpha,\beta).
\end{equation}
\end{prop}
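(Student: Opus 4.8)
The plan is to split Proposition~\ref{prop:n=2} into its two halves and prove each separately, since they are genuinely different in character: the left inequality (a \emph{majoration} of $\varphi^+_P$) is an upper-bound argument carried out by induction on length, and the right inequality (a \emph{minoration} of $\varphi^+_L$) is a construction exhibiting explicit linear orderings whose intersection is long. These are exactly the two halves announced in Section~\ref{section:organisation} (Sections~\ref{section:majoration} and~\ref{section:minoration}), so the work here is to organise the two estimates around the common quantity $\kappa\multordby(q_\kappa(\alpha)\otimes q_\kappa(\beta))+|r_\kappa(\alpha)+r_\kappa(\beta)|^+$.

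For the majoration $\varphi^+_P(\alpha,\beta)\leq\kappa\multordby(q_\kappa(\alpha)\otimes q_\kappa(\beta))+|r_\kappa(\alpha)+r_\kappa(\beta)|^+$, I would take two WPOs $\bfp$, $\bfq$ on a common vertex set $X$ with $\ell(\bfp)=\alpha$, $\ell(\bfq)=\beta$, $|X|=\kappa$, and bound $\ell(\bfp\sqcap\bfq)$. The idea is to use the inductive formula~\eqref{eq:induction}: $\ell(\bfp\sqcap\bfq)=\sup^+_{x\in X}\ell(\{(\bfp\sqcap\bfq)\not\geq x\})$, and for each $x$ note that $\{(\bfp\sqcap\bfq)\not\geq x\}$ is the intersection (on the set $\{X\not\geq_{\bfp\sqcap\bfq}x\}$) of the restrictions of $\bfp$ and $\bfq$, where at least one of $\{\bfp\not\geq x\}$, $\{\bfq\not\geq x\}$ has strictly smaller length than $\alpha$ resp.\ $\beta$ (using~\eqref{eq:restriction} and the fact that $x$ is comparable to itself). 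This should set up a transfinite induction; the crucial algebraic input will be the distributivity~\eqref{eq:indec-distrib} (since $\kappa$ is indecomposable, $\kappa\multordby(\beta'\oplus\gamma')=(\kappa\multordby\beta')\oplus(\kappa\multordby\gamma')$) together with the de~Jongh--Parikh product formula, to show that the bound $\theta^+(\alpha,\beta)$ is closed under the relevant ``one step down'' operation. One must be careful about the remainder term: when a length drops from $\kappa\multordby q+r$ to something smaller, either $q$ drops (and then we gain a full factor of $\kappa$) or $q$ stays and $r$ drops; the $|r_\kappa(\alpha)+r_\kappa(\beta)|^+$ part, being an initial ordinal hence indecomposable, absorbs the bookkeeping for the ``$q$ unchanged'' case. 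I expect this inductive absorption — verifying that $\sup^+$ over the one-step-down terms stays $\leq\theta^+(\alpha,\beta)$ — to be the main obstacle: getting the euclidean-division arithmetic to line up in the borderline case where one coordinate keeps its quotient and the other does not.

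For the minoration $\kappa\multordby(q_\kappa(\alpha)\otimes q_\kappa(\beta))+|r_\kappa(\alpha)+r_\kappa(\beta)|^+\leq\varphi^+_L(\alpha,\beta)$, I would construct, for each ordinal $\delta$ strictly below the right-hand expression, a pair of well orders $\bfa$, $\bfb$ on a common set with $\tau(\bfa)<\alpha^{?}$... more precisely with $\tau(\bfa)=\alpha$, $\tau(\bfb)=\beta$ (or, via Lemma~\ref{lem:increase-varphi}, with types merely $\leq\alpha,\beta$, which suffices by Lemma~\ref{lem:monotony-restr}) whose intersection has length $\geq\delta$. The natural building block is a sierpinskisation-type construction: decompose $\alpha=\kappa\multordby q_\kappa(\alpha)+r_\kappa(\alpha)$ as a sum of $q_\kappa(\alpha)$ blocks of order type $\kappa$ followed by a tail of type $r_\kappa(\alpha)$, similarly for $\beta$, and on the ``bulk'' part $\kappa\multordby q_\kappa(\alpha)$ vs $\kappa\multordby q_\kappa(\beta)$ use a product-of-blocks arrangement realising $\ell=\kappa\multordby(q_\kappa(\alpha)\otimes q_\kappa(\beta))$ — this is where the Hessenberg product enters, via the standard fact that the intersection of two linear orders realising lengths $\mu$ and $\nu$ of cardinality $\kappa$ can have length $\kappa\multordby(\text{product of quotients})$, the classical $\omega^m\times\omega^n$-style pattern scaled by $\kappa$. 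Then one glues a sierpinskisation of the two remainders $r_\kappa(\alpha)$, $r_\kappa(\beta)$ on a set of cardinality $|r_\kappa(\alpha)+r_\kappa(\beta)|$ on top (as a final segment), observing that an intersection of two linear orders on a set of that cardinality can have length any ordinal $<|r_\kappa(\alpha)+r_\kappa(\beta)|^+$ since its length is trivially $\geq$ that cardinality yet $<$ its Hartog; the additivity of length over an initial-segment/final-segment split (the ``incidentally'' remark after~\eqref{eq:restriction}) then gives $\ell\geq\kappa\multordby(q_\kappa(\alpha)\otimes q_\kappa(\beta))+\delta'$ for the chosen tail length $\delta'$, and letting $\delta'$ range up to $|r_\kappa(\alpha)+r_\kappa(\beta)|^+$ finishes it. The delicate point is realising $\kappa\multordby(q_\kappa(\alpha)\otimes q_\kappa(\beta))$ exactly as a length of an intersection of two \emph{well} orders of types $\kappa\multordby q_\kappa(\alpha)$, $\kappa\multordby q_\kappa(\beta)$ — the core combinatorial lemma of the minoration section — and I would isolate that as a separate statement, proving it first for $\kappa=\omega$, $q_i$ powers of $\omega$ (the generic case by indecomposability considerations) and then bootstrapping.

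Once both inequalities are in hand the proposition follows immediately, and combined with the trivial $\varphi^+_L\leq\varphi^+_P$ it forces equality throughout, which is the content of Theorem~\ref{lem:main} for $n=2$; the reduction in Section~\ref{section:reduction} then delivers the general $n$.
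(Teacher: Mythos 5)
Your overall architecture --- splitting the statement into a majoration of $\varphi^+_P$ proved by induction and a minoration of $\varphi^+_L$ proved by an explicit construction --- is exactly the paper's, but each half as you describe it has a genuine gap. For the majoration, the recursion via~\eqref{eq:induction} alone does not yield the tight bound. What it gives is Lemma~\ref{lem:indmaj}: $\varphi(\alpha,\beta)\leq\varphi_\#(\ul\alpha,\beta)\uloplus\varphi_\#(\alpha,\ul\beta)$, because $\{X\not\geq_{\bfp\sqcap\bfq}x\}$ is the \emph{union} $\{X\not\geq_\bfp x\}\cup\{X\not\geq_\bfq x\}$, and only on each piece separately does one coordinate drop strictly (your phrase ``at least one of $\{\bfp\not\geq x\}$, $\{\bfq\not\geq x\}$ has strictly smaller length'' glosses over the fact that on the whole union neither restriction need have smaller length, so one must bound the union by the natural sum of the two pieces). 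This bound is tight only when $\alpha$ and $\beta$ are indecomposable, so that $\lambda\uloplus\lambda=\lambda$ absorbs the doubling; already for $\alpha=\beta=\omega\cdot 2$ it gives $\omega\cdot 3\uloplus\omega\cdot 3=\omega\cdot 5$, overshooting the true value $\omega\cdot 4+1$. The missing ingredient is a second, \emph{additive} mechanism: cut one of the two WPOs into an initial and a final segment of prescribed lengths (Lemma~\ref{lem:longcut}) to get $\varphi^+(\alpha'+\alpha'',\beta)\leq\varphi_\#(\alpha',\beta)\uloplus\varphi_\#(\alpha'',\beta)$ with $\beta$ \emph{not} decremented (Lemma~\ref{lem:major-sum:bis}); the induction then runs through three cases (nonzero remainder, decomposable quotient, both indecomposable), and only the last uses the \eqref{eq:induction}-based step.

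For the minoration, your plan of gluing a sierpinskisation of the two remainders on top of both orders as a common final segment fails twice over. First, that tail would be a set of cardinality $|r_\kappa(\alpha)+r_\kappa(\beta)|$ carrying well orders of types $r_\kappa(\alpha)$ and $r_\kappa(\beta)$, which is impossible when $|r_\kappa(\alpha)|\neq|r_\kappa(\beta)|$ (e.g.\ $\kappa=\omega_2$, $r_\kappa(\alpha)=\omega$, $r_\kappa(\beta)=\omega_1$). Second, even when the cardinalities agree, the intersection of two well orders of types $r_\kappa(\alpha)$ and $r_\kappa(\beta)$ has length at most $r_\kappa(\alpha)\otimes r_\kappa(\beta)$, in general far below $|r_\kappa(\alpha)+r_\kappa(\beta)|^+$ (for both remainders equal to $\omega$: at most $\omega^2$ versus $\omega_1$); your claim that the tail can realise any length below the Hartog confuses the cardinality of the tail set with what its prescribed order types allow. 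The idea you are missing is the cross-pairing of Lemma~\ref{lem:decompinver}: write $\alpha=\delta+\kappa\multordby q_\kappa(\alpha)+r_\kappa(\alpha)$ and $\beta=\gamma+\kappa\multordby q_\kappa(\beta)+r_\kappa(\beta)$ with $\delta$ equipotent to $r_\kappa(\beta)$ and $\gamma$ equipotent to $r_\kappa(\alpha)$ (both absorbed by the leading $\kappa$), and stack the blocks of $\bfb$ in \emph{reverse} order over the same three vertex sets, so that the intersection is a direct sum of three pieces whose lengths combine by $\oplus$; each remainder at the top of one order is then paired with an arbitrarily large equipotent ordinal at the bottom of the other, and letting $\gamma$ and $\delta$ grow yields everything below $|r_\kappa(\alpha)+r_\kappa(\beta)|^+$. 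Your bulk step (realising $\kappa\multordby(q_\kappa(\alpha)\otimes q_\kappa(\beta))$ as an intersection of well orders of types $\kappa\multordby q_\kappa(\alpha)$ and $\kappa\multordby q_\kappa(\beta)$) is the right target; the paper achieves it uniformly with a mixing bi-functional relation inside $(\kappa\times q_\kappa(\alpha))\times(\kappa\times q_\kappa(\beta))$ rather than by reduction to powers of $\omega$, but that part of your route could be made to work.
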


As a consequence, we shall get~: 

\begin{cor}
\[
\varphi^+_{P,2}=\varphi^+_{L,2}
\]
and
\[
\varphi_{P}^+(\alpha,\beta)=\kappa\multordby(q_\kappa(\alpha)\otimes q_\kappa(\beta))+|r_\kappa(\alpha)+r_\kappa(\beta)|^+
\] 
if the arguments are equipotent, of common cardinality $\kappa$. 
\end{cor}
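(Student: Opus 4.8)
The plan is to deduce Proposition~\ref{prop:n=2} entirely from the two independent propositions announced for Sections~\ref{section:minoration} and~\ref{section:majoration}, namely Proposition~\ref{prop:minor-longdim2} (the minoration, giving the right-hand inequality $\theta^+(\alpha,\beta)\leq\varphi^+_L(\alpha,\beta)$) and Proposition~\ref{prop:major-longdim2:bis} (the majoration, giving the left-hand inequality $\varphi^+_P(\alpha,\beta)\leq\theta^+(\alpha,\beta)$). Once Proposition~\ref{prop:n=2} is in hand, the Corollary is a two-line squeeze argument. First I would recall that by definition $\varphi^+_L\leq\varphi^+_P$, since every WLO is a WPO and restricting the supremum to a subclass can only decrease it. Then the chain of inequalities in~\eqref{ineqs:varphiplusPL2} reads
\[
\varphi^+_P(\alpha,\beta)\leq
\kappa\multordby(q_\kappa(\alpha)\otimes q_\kappa(\beta))+|r_\kappa(\alpha)+r_\kappa(\beta)|^+
\leq\varphi^+_L(\alpha,\beta)\leq\varphi^+_P(\alpha,\beta),
\]
so all three quantities coincide and equal the middle term $\theta^+_2(\alpha,\beta)$. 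This proves simultaneously the displayed formula for $\varphi^+_{P,2}$ and the identity $\varphi^+_{P,2}=\varphi^+_{L,2}$ in the Corollary, under the standing hypothesis that $\alpha$ and $\beta$ are equipotent of common cardinality $\kappa$ (the non-equipotent case being trivial since both operations vanish there). So the corollary is immediate granting the proposition.

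As for the proposition itself, the two halves are genuinely different in flavour and I would keep them in separate sections as the author indicates. The majoration $\varphi^+_P(\alpha,\beta)\leq\theta^+(\alpha,\beta)$ is the conceptual heart: given any two WPO $\bfa,\bfb$ on a common vertex set $X$ with $\ell(\bfa)=\alpha$, $\ell(\bfb)=\beta$, one must bound $\ell(\bfa\sqcap\bfb)$ by the stated expression. The natural tool is the inductive formula~\eqref{eq:induction}, $\ell(\bfa\sqcap\bfb)=\sup^+_{x\in X}\ell(\{\bfa\sqcap\bfb\not\geq x\})$, combined with the observation that $\{\bfa\sqcap\bfb\not\geq x\}$ is contained in the union of $\{\bfa\not\geq x\}$ and $\{\bfb\not\geq x\}$ --- but one must be careful, because it is the intersection ordering restricted to this union, not the union of the two restricted orderings. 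The real work is to stratify $X$ by the "$\kappa$-cofinal part" versus the "small initial segments" using Lemma~\ref{lem:length-init} and Lemma~\ref{lem:longcut}: write $\alpha=\kappa\multordby q_\kappa(\alpha)+r_\kappa(\alpha)$ and cut $\bfa$ into $q_\kappa(\alpha)$ blocks each of length $\kappa$ plus a tail of length $r_\kappa(\alpha)<\kappa$, similarly for $\bfb$, and analyse how the intersection interacts with the product of the two block structures; the $\bigotimes q_i$ comes from de Jongh--Parikh applied blockwise and the $|\sum r_i|^+$ term from the bounded tails, using~\eqref{eq:indec-distrib}.

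The minoration $\theta^+(\alpha,\beta)\leq\varphi^+_L(\alpha,\beta)$ requires, for each ordinal $\gamma<\theta^+(\alpha,\beta)$, an explicit construction of two well orders of types $\alpha$ and $\beta$ on a common set whose intersection has length $\geq\gamma$. Here Lemma~\ref{lem:increase-varphi} lets us first reduce to building, on a set of size $\kappa$, an intersection of length close to $\kappa\multordby(q_\kappa(\alpha)\otimes q_\kappa(\beta))$ by a lexicographic/Sierpi\'nski-type construction: take the product poset realising $q_\kappa(\alpha)\otimes q_\kappa(\beta)$ as a WPO of that length (de Jongh--Parikh again), then "blow it up" by $\kappa$ and realise it as an intersection of two linear orders by a Sierpi\'nski pairing --- the same mechanism as in Section~\ref{section:sierp} --- and finally adjust the low-order $|r_\kappa(\alpha)+r_\kappa(\beta)|^+$ part and raise the types to exactly $\alpha$ and $\beta$ via Lemma~\ref{lem:increase-varphi}. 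I expect the main obstacle to be the majoration, and specifically the bookkeeping needed to show that the intersection of the "tail $\times$ everything" and "everything $\times$ tail" contributions cannot exceed $|r_\kappa(\alpha)+r_\kappa(\beta)|^+$ rather than something like $r_\kappa(\alpha)\oplus r_\kappa(\beta)$ --- that is, seeing why passing to the least initial ordinal above the sum is exactly what the intersection forces. That subtlety is presumably what makes the bound optimal for $n\geq 2$ and is the delicate point Propositions~\ref{prop:minor-longdim2} and~\ref{prop:major-longdim2:bis} are designed to capture.
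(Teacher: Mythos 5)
Your squeeze argument is exactly the paper's (implicit) proof of this corollary: combine $\varphi^+_L\leq\varphi^+_P$ with the two inequalities of Proposition~\ref{prop:n=2} to force all three quantities to coincide with the middle term. The additional sketches of the minoration and majoration are not needed for the corollary itself (those are cited as separate propositions), so the essential content matches the paper.
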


The left-hand inequality in Proposition~\ref{prop:n=2} is Proposition~\ref{prop:major-longdim2:bis} and the right-hand one is 
 Proposition~\ref{prop:minor-longdim2}.


\subsection{Minoration}\label{section:minoration} 

The proof of the minoration of $\varphi^+$ is performed in two steps.
We first consider the case of two arguments that are multiples of their common cardinality,
and then we reduce the general case to this one.
 
\subsubsection{Minoration for arguments multiples of their cardinality}\label{section:minoration-partic-case} 
 
Given an infinite cardinal $\kappa$ viewed as an initial ordinal,
and two order types $\alpha$ and $\beta$ of cardinality at most $\kappa$,
let us consider three posets $\bfk$, $\bfa$ and $\bfb$ 
of respective order types $\alpha$, $\beta$ and $\kappa$,
and vertex sets $K$, $A$ and $B$.
Let $\bfp=(P,\leq)$ denote the cartesian product $(\bfk\multordby\bfa)\times(\bfk\multordby\bfb)$.
Thus $P=(K\multsetby A)\times (K\multsetby B)$.

We are interested in orders of the form $\bfp\restriction R$,
and in particular in those for wich $R\subseteq(K\multsetby A)\times (K\multsetby B)$ is the graph of a bijection between $K\multsetby A$ and $K\multsetby B$.
Indeed we shall see (Corollary~\ref{cor:minor-multiples}) that if $R$ is mixing (see below) 
and $\alpha$ and $\beta$ are types of well orders, then 
$\bfp\restriction R$ has length at least $\kappa\multordby(\alpha\otimes\beta)$ and is an intersection of two well orders of respective types $\kappa\multordby\alpha$ and  $\kappa\multordby\beta$.

Given $R\subseteq P$, let us say that $R$ is {\em functional}, resp. {\em partial functional},
if it is the graph of a function, resp. partial function of $K\multsetby A$ to $K\multsetby B$,
in other words if each vertical section $R\cap(\{(k,a)\}\times(K\multsetby B))$ has exactly one element, resp.
at most one element. 
Say that $R$ is {\em co-functional}, resp. {\em partial co-functional}
if each horizontal section $R\cap((K\multsetby A)\times\{(k,b)\})$ has exactly one element, resp.
at most one element.
Then say that $R$ is {\em bi-functional}, resp. {\em partial bi-functional}
if it is both fonctional and co-functional, resp. both partial functional and partial co-functional.
Observe that $R$ is bi-functional \iff\ this is the graph of a bijection between $K\multsetby A$ and $K\multsetby B$, and that it is partial bi-functional \iff\ this is the graph of a one-to-one partial function.

For $R\subseteq P$, and $(a,b)\in A\times B$, we let~:
\[
R_a^b:=\{(k_1,k_2):((k_1,a),(k_2,b))\in R\}
\subseteq K\times K.
\]

Note that, if $R$ is partial functional, resp. partial co-functional, 
then so is each $R_a^b$ (with the obvious extensions of the definitions).

Say that $R$ is {\em mixing} if each $R_a^b$ has cardinality $\kappa$.

\begin{lem}\label{lem:propri-mixing}
Consider $R\subseteq(K\multsetby A)\times (K\multsetby B)$.
\begin{enumerate}
\item
If $R$ is partial bi-functional then  the poset $\bfp\restriction R$
is the intersection of two orders of types at most $\kappa\multordby\alpha$ and $\kappa\multordby\beta$.
\item
If $R$ is partial functional and mixing then  the poset $\bfp\restriction R$
has an (edge)-extension of type $\kappa\multordby(\alpha\times\beta)$.
\end{enumerate}
If $R$ is partial bi-functional and mixing then  the poset $\bfp\restriction R$ 
is the intersection of two orders of types $\kappa\multordby\alpha$ and $\kappa\multordby\beta$
and has an (edge)-extension of type $\kappa\multordby(\alpha\times\beta)$.
\end{lem}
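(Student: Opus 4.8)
The plan is to prove the three assertions together, viewing the first two as the ``intersection'' half and the ``extension'' half of the last sentence; the extra content of that last sentence, beyond the mere conjunction, is that under mixing the two orders in the intersection half have their types \emph{exactly} (not just at most) equal to $\kappa\multordby\alpha$ and $\kappa\multordby\beta$.

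For the intersection half I would use the two projections of $R$: $\pi_1$ sending $((k_1,a),(k_2,b))$ to $(k_1,a)\in K\multsetby A$, and $\pi_2$ sending it to $(k_2,b)\in K\multsetby B$. Partial functionality makes $\pi_1$ injective and partial co-functionality makes $\pi_2$ injective. Pulling the order of $\bfk\multordby\bfa$ back along $\pi_1$ gives a relation $Q_1$ on $R$ (with $x\leq_{Q_1}y$ iff $\pi_1(x)\leq_{\bfk\multordby\bfa}\pi_1(y)$), which is antisymmetric because $\pi_1$ is injective, hence an order, and which $\pi_1$ identifies with the suborder $(\bfk\multordby\bfa)\restriction\pi_1(R)$ of $\bfk\multordby\bfa$; thus $Q_1$ has type at most $\kappa\multordby\alpha$, and symmetrically $Q_2$ has type at most $\kappa\multordby\beta$. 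By the definition of the cartesian product, $x\leq_{\bfp}y$ iff $\pi_1(x)\leq_{\bfk\multordby\bfa}\pi_1(y)$ and $\pi_2(x)\leq_{\bfk\multordby\bfb}\pi_2(y)$, i.e. $\bfp\restriction R=Q_1\sqcap Q_2$; this gives (1). If moreover $R$ is mixing, then for each $a\in A$ the block $\{k:(k,a)\in\pi_1(R)\}$ has cardinality $\kappa$ (it contains the first coordinates of $R_a^b$ for any $b$), hence order type $\kappa$ as a subset of the well order $\bfk$; since every block is non-empty these blocks are arranged along $\bfa$, so $Q_1\cong(\bfk\multordby\bfa)\restriction\pi_1(R)$ has type exactly $\kappa\multordby\alpha$, and likewise $Q_2$ has type $\kappa\multordby\beta$.

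For the extension half, let $\rho\colon R\to A\multsetby B$ be $((k_1,a),(k_2,b))\mapsto(a,b)$. Since $x\leq_{\bfp}y$ forces $a\leq_{\bfa}a'$ and $b\leq_{\bfb}b'$, the map $\rho$ is $\leq_{\bfp}$-increasing into $\bfa\times\bfb$, and its fibre over $(a,b)$ is, via $((k_1,a),(k_2,b))\mapsto(k_1,k_2)$, the poset $(\bfk\times\bfk)\restriction R_a^b$. The crux is that each such fibre has length \emph{exactly} $\kappa$: by mixing it has cardinality $\kappa$, hence length at least $\kappa$; and, $R_a^b$ being the graph of a partial injection of $K$ into $K$, for every $(k_1,k_2)\in R_a^b$ the set of $(k_1',k_2')\in R_a^b$ with $(k_1',k_2')\not\geq(k_1,k_2)$ lies inside the part of that graph over $\{k_1'<k_1\}$ together with the part over the preimage of $\{k_2'<k_2\}$ — two sets of cardinality $<\kappa$ — so Lemma~\ref{lem:length-init} gives length at most $\kappa$. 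Now pick for each $(a,b)$ a linear extension $\bfc_{ab}$ of $(\bfk\times\bfk)\restriction R_a^b$ of type $\kappa$, and define $\bfq$ on $R$ by: $x\leq_{\bfq}y$ iff $\rho(x)<_{\bfa\times\bfb}\rho(y)$, or $\rho(x)=\rho(y)$ and $x\leq_{\bfc_{\rho(x)}}y$. Then $\bfq$ is precisely $\bfa\times\bfb$ with each vertex blown up into a copy of $\bfk$, i.e. $\bfq\cong\bfk\multordby(\bfa\times\bfb)$, of type $\kappa\multordby(\alpha\times\beta)$, and $\bfq$ extends $\bfp\restriction R$ because $x\leq_{\bfp}y$ either has $\rho(x)<_{\bfa\times\bfb}\rho(y)$, or has $\rho(x)=\rho(y)$ with $x\leq_{\bfp}y$ restricting to a comparability inside the fibre, which $\bfc_{\rho(x)}$ respects. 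This gives (2), and the final sentence is the conjunction of the two halves with the mixing refinement of the first.

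I expect the only non-routine step to be the claim that each column $(\bfk\times\bfk)\restriction R_a^b$ has length exactly $\kappa$: the lower bound is the cardinality estimate, but the upper bound genuinely exploits, through Lemma~\ref{lem:length-init}, that the columns are graphs of partial injections, which is why partial co-functionality (rather than mere functionality) is what makes the extension half go through in the setting of the last sentence. The remaining points — antisymmetry of the pullback orders, and the arithmetic of the lexicographic product $\multordby$ (right factor most significant, each of its vertices replaced by a copy of the left factor) so that the block counts fall exactly on $\kappa\multordby\alpha$, $\kappa\multordby\beta$ and $\kappa\multordby(\alpha\times\beta)$ — are bookkeeping.
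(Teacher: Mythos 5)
Your treatment of item (1) is the paper's own argument: the two coordinate projections are injective on a partial bi-functional $R$, the restriction of $\leq_\bfp$ to $R$ is the intersection of the pullbacks of the two lexicographic products, and mixing forces each block of $\pi_1(R)$, resp.\ $\pi_2(R)$, to have type exactly $\kappa$, whence the exact types $\kappa\multordby\alpha$ and $\kappa\multordby\beta$. That half is fine.

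For item (2), however, your construction proves less than what is stated. Item (2) assumes only that $R$ is \emph{partial functional} and mixing, while your fibre-length computation genuinely needs each $R_a^b$ to be the graph of a partial \emph{injection}, i.e.\ partial co-functionality as well --- and you cannot dispense with it, because without co-functionality a fibre can have length strictly greater than $\kappa$: for $\kappa=\omega$, the partial-functional set $R_a^b=\{(0,1)\}\cup\{(n,0):n\geq 1\}$ gives a fibre consisting of an $\omega$-chain plus one incomparable point, of length $\omega+1$, so no linear extension of type $\omega$ of the fibre can serve as a block of your $\bfq$. You flag this yourself, but frame it as if co-functionality were needed for the extension half; it is only needed for \emph{your} route. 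The paper's proof of (2) avoids the fibres' internal order altogether: it maps $((k_1,a),(k_2,b))\longmapsto(k_1,(a,b))$ into $\bfk\multordby(\bfa\times\bfb)$. On a partial functional $R$ this map is injective (since $(k_1,a)$ determines the pair) and $\leq_\bfp$-increasing, so pulling back the order of its image gives an edge-extension of $\bfp\restriction R$; mixing plus partial functionality make the image's fibre over each $(a,b)$, namely the first projection of $R_a^b$, a subset of $K$ of cardinality $\kappa$, hence of type $\kappa$, so the image has type $\kappa\multordby(\alpha\times\beta)$. In short: the paper discards the second $K$-coordinate, you try to linearize it, and that is exactly where co-functionality sneaks in. Since the only downstream use of this lemma (Corollary~\ref{cor:minor-multiples}) invokes the final sentence, where $R$ is bi-functional and mixing, your argument does suffice for everything the paper needs; but as a proof of the lemma as stated, item (2) is left unproved, and you should either add the paper's collapsing map or weaken the statement you claim to prove.
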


\begin{proof}\
\begin{enumerate}
\item
The ordering of $\bfp$ is the intersection of the two lexicographical products of $\bfk\multordby\bfa$ and $\bfk\multordby\bfb$ (the left one and the right one)%
\footinvis{
Pr\'eciser les trois produits en sections de pr\'eliminaires~?
}
.
If $R$ is partial bi-functional then the restrictions to $R$ of these two lexicographical products have types at most $\kappa\multordby\alpha$ and  $\kappa\multordby\beta$.
They have types exactly $\kappa\multordby\alpha$ and  $\kappa\multordby\beta$ if in addition $R$ is mixing.
\item
Consider the mapping
$((k_1,a),(k_2,b))\longmapsto(k_1,(a,b))$.
Its restriction to any partial functional $R$ is increasing from $\bfp\restriction R$ to $\bfk\multordby(\bfa\times\bfb)$.
If, in addition, $R$ is mixing, then the image of $R$ has type $\kappa\multordby(\alpha\times\beta)$. 
\end{enumerate}
\end{proof}

\begin{lem}\label{lem:exists-bi-functional-mixing}
There exists a mixing bi-functional $R\subseteq P$.
\end{lem}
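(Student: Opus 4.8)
The plan is to obtain $R$ as the graph of a carefully chosen bijection $g\colon K\multsetby A\to K\multsetby B$. We may assume $A\neq\varnothing$ and $B\neq\varnothing$ (the degenerate cases being irrelevant to the minoration, since then $\bfk\multordby\bfa$ or $\bfk\multordby\bfb$, hence $\bfp$, has empty domain); then $|K\multsetby A|=|K\multsetby B|=\kappa$, so bijections between the two sets abound, and the real task is to arrange that every block $R_a^b$ has full cardinality $\kappa$.

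First I would set up two families of partitions of $K$. Using $|B|\leq\kappa$, hence $|B|\multordby\kappa=\kappa$, I choose for each $a\in A$ a partition $K=\bigcup_{b\in B}U_{a,b}$ into pairwise disjoint pieces each of cardinality $\kappa$; dually, using $|A|\multordby\kappa=\kappa$, I choose for each $b\in B$ a partition $K=\bigcup_{a\in A}V_{a,b}$ into pairwise disjoint pieces each of cardinality $\kappa$. Since $|U_{a,b}|=|V_{a,b}|=\kappa$, I fix a bijection $\rho_{a,b}\colon U_{a,b}\to V_{a,b}$ for each $(a,b)\in A\multsetby B$. Then I define $g(k,a):=(\rho_{a,b}(k),b)$, where $b$ is the unique element of $B$ with $k\in U_{a,b}$, and I let $R$ be the graph of $g$.

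It then remains to verify that $g$ is a bijection and that $R$ is mixing. For injectivity and surjectivity together: given $(k',b')\in K\multsetby B$, any preimage $(k,a)$ is forced by its second coordinate to satisfy $k\in U_{a,b'}$, and by its first to satisfy $\rho_{a,b'}(k)=k'$, hence $k'\in V_{a,b'}$ and $k=\rho_{a,b'}^{-1}(k')$; since $(V_{a,b'})_{a\in A}$ partitions $K$, there is exactly one admissible $a$, which then determines $k$, and conversely this pair is indeed sent to $(k',b')$. Thus $g$ is a bijection, so $R$ is bi-functional. Finally $R_a^b=\{(k,\rho_{a,b}(k)):k\in U_{a,b}\}$ has cardinality $|U_{a,b}|=\kappa$, so $R$ is mixing.

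The only delicate point is reconciling two opposing demands: $g$ must be globally one-to-one and onto, yet every slice $K\multsetby\{a\}$ must still feed $\kappa$-many vertices into each column $K\multsetby\{b\}$. Separating the ``target column'' bookkeeping (the $U_{a,b}$) from the ``source column'' bookkeeping (the $V_{a,b}$) is precisely what lets both hold simultaneously; beyond that, the only ingredient is the cardinal identity $|A|\multordby\kappa=|B|\multordby\kappa=\kappa$.
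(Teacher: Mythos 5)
Your construction is correct and is essentially the paper's: the paper also partitions $K$ into $\kappa$-sized blocks indexed by the ``other'' coordinate and glues bijections between corresponding blocks, the only difference being that it uses a single partition $(K^b)_{b\in B}$ of the source side and a single partition $(K_a)_{a\in A}$ of the target side, i.e.\ your argument with $U_{a,b}:=K^b$ and $V_{a,b}:=K_a$ independent of the extra index. Your extra generality (partitions varying with $a$, resp.\ $b$) changes nothing, and your verification of bijectivity and of $|R_a^b|=\kappa$ is sound.
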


\begin{proof}
Consider two partitions $(K_a:a\in A)$ and $(K^b:b\in B)$ of $K$ into classes of size $\kappa$ each.
For each $(a,b)\in A\times B$, consider the graph $G_a^b\subseteq K\times K$ of a bijection between $K^b$ and $K_a$.
Then let~:
\[
R:=\{((k_1,a),(k_2,b)):(k_1,k_2)\in G_a^b:a\in A,b\in B\}\subseteq P.
\]
Then observe that $R$ is bi-functional and mixing (with $R_a^b=G_a^b$). 
\end{proof}

In the case $\alpha$ and $\beta$ are types of well orders, these lemma yield~:

\begin{cor}\label{cor:minor-multiples}
Consider an infinite cardinal $\kappa$ and two ordinals $\alpha$ and $\beta$ of cardinality at most $\kappa$.
Then there is a WPO $\bfr$ of length at least $\kappa\multordby(\alpha \otimes \beta)$
and that is the intersection of two well orders of types $\kappa\multordby \alpha$ and $\kappa\multordby\beta$ respectively.
In particular~:
\begin{equation*}
\varphi^+_L(\kappa\multordby \alpha,\kappa\multordby\beta)>\kappa\multordby(\alpha \otimes \beta).
\end{equation*}
\end{cor}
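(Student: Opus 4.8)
The plan is to assemble the two preceding lemmas of this subsection together with the de Jongh--Parikh formula. Keep the notation of Section~\ref{section:minoration-partic-case}: so $\bfp=(\bfk\multordby\bfa)\times(\bfk\multordby\bfb)$ is a WPO, being a product of two well orders. By Lemma~\ref{lem:exists-bi-functional-mixing}, fix a mixing bi-functional $R\subseteq P$ and set $\bfr:=\bfp\restriction R$; this is again a WPO, as an induced sub-poset of $\bfp$. Since $R$ is, in particular, partial bi-functional and mixing, the concluding assertion of Lemma~\ref{lem:propri-mixing} applies to it: $\bfr$ is the intersection of two orders of types $\kappa\multordby\alpha$ and $\kappa\multordby\beta$ — these being well orders, their types being ordinals — and $\bfr$ admits an (edge)-extension $\bfq$ of type $\kappa\multordby(\alpha\times\beta)$, that is, $\bfq$ is (isomorphic to) the lexicographical product of a well order of type $\kappa$ by a poset of order type $\alpha\times\beta$.

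It then remains only to turn this into the length estimate $\ell(\bfr)\geq\kappa\multordby(\alpha\otimes\beta)$. The second factor of $\bfq$, being of order type $\alpha\times\beta$, is a WPO of length $\alpha\otimes\beta$ by the de Jongh--Parikh formula~\eqref{eq:dejongh}, hence — the length being attained — it has a linear extension of type $\alpha\otimes\beta$. Replacing that factor by such a linear extension yields a poset $\bfy$ which is again a lexicographical product of a well order of type $\kappa$ by a well order, hence itself a well order, of ordinal type $\kappa\multordby(\alpha\otimes\beta)$; and $\bfy$ edge-extends $\bfq$, hence also $\bfr$, because $\multordby$ is monotone for edge-extension in its second argument. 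A well order that edge-extends $\bfr$ is one of its linear extensions, so $\ell(\bfr)\geq\tau(\bfy)=\kappa\multordby(\alpha\otimes\beta)$. Finally, $\bfr$ being the intersection of two well orders of types $\kappa\multordby\alpha$ and $\kappa\multordby\beta$, it realizes a member of the family of which $\varphi^+_L(\kappa\multordby\alpha,\kappa\multordby\beta)$ is the least strict upper bound, whence $\varphi^+_L(\kappa\multordby\alpha,\kappa\multordby\beta)>\ell(\bfr)\geq\kappa\multordby(\alpha\otimes\beta)$.

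There is no genuine obstacle: the combinatorial content is packed into Lemmas~\ref{lem:propri-mixing} and~\ref{lem:exists-bi-functional-mixing}. The points demanding a little care are purely formal — the order-type bookkeeping for the lexicographical product $\multordby$ (so that $\bfy$ really has type $\kappa\multordby(\alpha\otimes\beta)$ and really refines $\bfq$), and keeping the length inequality for an edge-extension in its correct, \emph{decreasing}, direction, which is exactly the monotonicity already used for the middle inequality of~\eqref{eq:restriction}.
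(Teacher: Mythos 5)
Your proposal is correct and follows essentially the same route as the paper: take $\bfr:=\bfp\restriction R$ for the mixing bi-functional $R$ of Lemma~\ref{lem:exists-bi-functional-mixing}, apply Lemma~\ref{lem:propri-mixing} to get the two well orders and the edge-extension of type $\kappa\multordby(\alpha\times\beta)$, then pass to $\kappa\multordby(\alpha\otimes\beta)$ via de Jongh--Parikh. Your explicit construction of the well order $\bfy$ by replacing the second factor with a linear extension of $\alpha\times\beta$ is exactly the paper's ``easy to check'' justification of $\ell(\kappa\multordby(\alpha\times\beta))\geq\kappa\multordby\ell(\alpha\times\beta)$, merely spelled out.
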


\begin{proof}
Consider $\bfr:=\bfp\restriction R$ for $R$ bi-functional and mixing, as in Lemma~\ref{lem:propri-mixing}.
Thus,
on the one hand $\bfr$ is the intersection of two well orders of types $\kappa\multordby \alpha$ and $\kappa\multordby\beta$ respectively,
and on the other hand it has an edge-extension of isomorphy type $\kappa\multordby(\alpha\times \beta)$, so that $\ell(\bfr)\geq\ell(\kappa\multordby(\alpha\times \beta))$.
Besides~:
\[
\ell(\kappa\multordby(\alpha\times \beta))\geq\kappa\multordby\ell(\alpha\times \beta)=\kappa\multordby(\alpha\otimes\beta).
\]
The last equality is Carruth's result, \cf~\eqref{eq:dejongh}, while the inequality is easy to check~:
an extension of $\kappa\multordby(\alpha\times \beta)$
can be obtained as the ordinal product of $\kappa$ by any extension of $\alpha\times \beta$.
\end{proof}

\subsubsection{Minoration. General case}\label{section:minoration-gen-case} 

\begin{lem}\label{lem:decompinver}
Consider ordinals $\alpha_1$, $\dots$, $\alpha_n$ and $\beta_1$, $\dots$, $\beta_n$ 
such that $|\alpha_1|=|\beta_1|$, $\dots$,  $|\alpha_n|=|\beta_n|$.
Then (observe the ordering of the $\beta_i$'s)~:
\begin{equation} \label{eq:decompinver}
\varphi_L^+(\alpha_1+\dots+\alpha_n,\beta_n+\dots+\beta_1)\geq
\varphi_L^+(\alpha_1,\beta_1)\uloplus\dots\uloplus\varphi^+_L(\alpha_n,\beta_n).
\end{equation}
\end{lem}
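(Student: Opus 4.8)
The plan is to build, on a common vertex set, a family of witnesses for the right-hand side by "stacking" optimal witnesses for each pair $(\alpha_i,\beta_i)$ along the two given linear orders. Concretely, fix ordinals $\alpha_i'<\varphi_L^+(\alpha_i,\beta_i)$ for $1\le i\le n$; I must produce two well orders $\bfa$, $\bfb$ on a common set, of types $<\alpha_1+\dots+\alpha_n$ and $<\beta_n+\dots+\beta_1$ respectively, whose intersection has length $\ge\alpha_1'\oplus\dots\oplus\alpha_n'$. By definition of $\varphi_L^+$, for each $i$ there are well orders $\bfa_i$, $\bfb_i$ on a common set $X_i$, with $\tau(\bfa_i)<\alpha_i$ (possibly after passing to an initial segment via Lemma~\ref{lem:trouverunnom}-style surgery, and using Lemma~\ref{lem:increase-varphi} to realize lengths exactly), $\tau(\bfb_i)<\beta_i$, and $\ell(\bfa_i\sqcap\bfb_i)\ge\alpha_i'$. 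I may also assume $|\alpha_i|=|\beta_i|$ throughout so that these witnesses exist and have the stated cardinalities.

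Next I would take $X:=X_1\dot\cup\dots\dot\cup X_n$ and define $\bfa$ as the ordinal sum $\bfa_1+\bfa_2+\dots+\bfa_n$ (the $X_i$'s stacked in increasing order), which is a well order of type $\tau(\bfa_1)+\dots+\tau(\bfa_n)<\alpha_1+\dots+\alpha_n$; and $\bfb$ as the ordinal sum $\bfb_n+\bfb_{n-1}+\dots+\bfb_1$ in the \emph{reversed} block order, of type $\tau(\bfb_n)+\dots+\tau(\bfb_1)<\beta_n+\dots+\beta_1$. The reversal of the blocks is exactly why the statement writes $\beta_n+\dots+\beta_1$: in the intersection $\bfa\sqcap\bfb$, two vertices lying in different blocks $X_i$ and $X_j$ with $i<j$ are ordered one way by $\bfa$ (block $i$ below block $j$) and the other way by $\bfb$ (block $j$ below block $i$), hence are incomparable in the intersection. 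Therefore $\bfa\sqcap\bfb$ is exactly the direct sum $\biguplus_{i=1}^n(\bfa_i\sqcap\bfb_i)$ of the per-block intersections — the cross-block pairs contribute nothing.

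Then I would invoke the de Jongh--Parikh formula~\eqref{eq:dejongh}, or rather its iteration/\eqref{eq:restriction}, to compute
\[
\ell(\bfa\sqcap\bfb)=\ell\Bigl(\biguplus_{i=1}^n(\bfa_i\sqcap\bfb_i)\Bigr)=\ell(\bfa_1\sqcap\bfb_1)\oplus\dots\oplus\ell(\bfa_n\sqcap\bfb_n)\ge\alpha_1'\oplus\dots\oplus\alpha_n'.
\]
Since $\bfa\sqcap\bfb$ is the intersection of two well orders of types $<\alpha_1+\dots+\alpha_n$ and $<\beta_n+\dots+\beta_1$, this shows $\varphi_L^+(\alpha_1+\dots+\alpha_n,\beta_n+\dots+\beta_1)>\alpha_1'\oplus\dots\oplus\alpha_n'$. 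Taking the supremum over all choices $\alpha_i'<\varphi_L^+(\alpha_i,\beta_i)$ and recalling the definition $\varphi_L^+(\alpha_1,\beta_1)\uloplus\dots\uloplus\varphi_L^+(\alpha_n,\beta_n)=\sup^+\{\alpha_1'\oplus\dots\oplus\alpha_n':\alpha_i'<\varphi_L^+(\alpha_i,\beta_i)\}$ from Section~\ref{section:specific-conven-ordin-oper} yields~\eqref{eq:decompinver}.

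The main obstacle is bookkeeping at the level of vertex sets: the witnesses $(\bfa_i,\bfb_i)$ for distinct $i$ live \emph{a priori} on different sets of possibly different cardinalities, and I need strict type inequalities $\tau(\bfa_i)<\alpha_i$, $\tau(\bfb_i)<\beta_i$ simultaneously. One must argue that $\varphi_L^+(\alpha_i,\beta_i)$ being a strict supremum means each $\alpha_i'<\varphi_L^+(\alpha_i,\beta_i)$ is dominated by some genuine intersection length, then use Lemma~\ref{lem:trouverunnom} (or the same cut-and-transport device used in its proof) to realize that length as $\ell(\bfa_i\sqcap\bfb_i)$ with $\bfa_i$, $\bfb_i$ of types \emph{strictly} below $\alpha_i$, $\beta_i$ on a set of the appropriate cardinality, and finally to relocate everything onto disjoint copies. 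The one genuinely substantive point beyond this packaging is verifying that the cross-block pairs are incomparable in $\bfa\sqcap\bfb$, which is where the order reversal of the $\beta$-blocks is used and is immediate once set up.
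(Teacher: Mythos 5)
Your core construction is exactly the paper's: disjoint blocks $X_i$, the sum $\bfa_1+\dots+\bfa_n$ against the reversed sum $\bfb_n+\dots+\bfb_1$, the observation that cross-block pairs are incomparable so that $\bfa\sqcap\bfb=\biguplus_i(\bfa_i\sqcap\bfb_i)$, and de Jongh--Parikh~\eqref{eq:dejongh} to get $\ell(\bfa\sqcap\bfb)=\bigoplus_i\ell(\bfa_i\sqcap\bfb_i)$. That part is right and is all the lemma needs.

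The genuine gap is in your handling of the quantifiers, where you insist on \emph{strict} type inequalities $\tau(\bfa_i)<\alpha_i$ and $\tau(\bfb_i)<\beta_i$ for the block witnesses. This is both unobtainable and unwanted. Unobtainable: from $\alpha_i'<\varphi_L^+(\alpha_i,\beta_i)$ the definition of $\sup^+$ only gives witnesses with $\tau(\bfa_i)=\alpha_i$, $\tau(\bfb_i)=\beta_i$ and $\ell(\bfa_i\sqcap\bfb_i)\geq\alpha_i'$; you cannot in general shrink the types. Take $\alpha_i=\beta_i=\omega$ and $\alpha_i'=\omega<\varphi_L^+(\omega,\omega)=\omega+1$: any well orders of types $<\omega$ live on a finite set, so no intersection of length $\geq\omega$ exists. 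Lemma~\ref{lem:trouverunnom} does not rescue this — it applies to bounds of the form $\widetilde\varphi_L(\cdots)$ (whose defining set already uses strict type inequalities), not to $\varphi_L^+(\cdots)$, and $\widetilde\varphi_L$ can be strictly smaller (here $\widetilde\varphi_L(\omega,\omega)=\omega$). Unwanted: even if you had such witnesses, the resulting $\bfa$ and $\bfb$ would have types strictly below $\alpha_1+\dots+\alpha_n$ and $\beta_n+\dots+\beta_1$, so they witness membership in the set defining $\widetilde\varphi_L$ of those sums, not $\varphi_L^+$, whose defining set requires the types to be \emph{exactly} the arguments; you would then need Lemma~\ref{lem:monotony-restr} plus an equipotence check to transfer the bound. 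The fix is simply to drop the strictness throughout: take $\tau(\bfa_i)=\alpha_i$ and $\tau(\bfb_i)=\beta_i$ with $\ell(\bfa_i\sqcap\bfb_i)\geq\alpha_i'$; then $\bfa$ and $\bfb$ have exactly the types $\alpha_1+\dots+\alpha_n$ and $\beta_n+\dots+\beta_1$, so $\ell(\bfa\sqcap\bfb)$ lies in the set whose $\sup^+$ is the left-hand side, and $\ell(\bfa\sqcap\bfb)\geq\alpha_1'\oplus\dots\oplus\alpha_n'$ gives the inequality. With that correction your argument coincides with the paper's proof.
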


\dessins{dim2-wo-lem-decompinver}{dim2-wo-lem-decompinver}{Proof of Lemma~\ref{lem:decompinver}.
Noter que l'indexation des $\beta_i$ ne correspond pas \`a celle de l'\'enonc\'e de cette version, mais \`a celle de la version originale
}

\begin{proof} 
For each $i\in\{1,\dots,n\}$, 
let $X_i$, 
be $n$ pairwise disjoint sets of cardinalities $\kappa_i:=|\alpha_i|=|\beta_i|$~;
let $\bfa_i$ and $\bfb_i$ be two well ordered sets of type $\alpha_i$ and $\beta_i$ with vertex set  $X_i$ and let $\bfp_i:=  \bfa_i\sqcap \bfb_i$. 
The lexicographical sums $\bfa:=\bfa_1+ \dots+ \bfa_n$ and $\bfb:= \bfb_n+ \dots + \bfb_1$ on $X:=\cup_{1\leq i\leq n}X_{i}$ have order type $\alpha:=\alpha_1+\dots+\alpha_n$ and $\beta:=\beta_n+\dots+\beta_1$ and the ordered set  $\bfp:=\bfa\sqcap \bfb $ is the direct sum $\bfp_1\uplus\dots \uplus \bfp_n$. Hence, according to de Jongh-Parikh formula, $\ell(\bfp)=\ell(\bfp_1)\oplus\dots\oplus\ell(\bfp_n)$. 

Summing up, the typical member, namely $\ell(\bfp_1)\oplus\dots\oplus\ell(\bfp_n)$, of the set of ordinals of which the right-hand side of~\eqref{eq:decompinver} is the least strict upper bound belongs to the set of ordinals $\ell(\bfp)$ of which the 
left-hand side of~\eqref{eq:decompinver} is the least strict upper bound.
Inequality~\eqref{eq:decompinver} between these least strict upper bounds follows.
\invis{See Figure~\ref{dim2-wo-lem-decompinver}.}
\end{proof}


\begin{prop}\label{prop:minor-longdim2}
Given two equipotent infinite ordinals $\alpha$ and $\beta$, of cardinality $\kappa$~:
\begin{equation}\label{eq:prop-minor-longdim2}
\varphi_L^+(\alpha,\beta)\geq\kappa\multordby(q_\kappa(\alpha)\otimes q_\kappa(\beta))+|r_\kappa(\alpha)+r_\kappa(\beta)|^+.
\end{equation}
\end{prop}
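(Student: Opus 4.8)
The plan is to prove the minoration by exhibiting, for each $\varepsilon>0$ in the relevant sense, a WPO that is the intersection of two well orders of types $\kappa\cdot q_\kappa(\alpha)$ and $\kappa\cdot q_\kappa(\beta)$ (possibly padded up to $\alpha$ and $\beta$) and whose length approaches the claimed bound. The natural decomposition to exploit is $\alpha=\kappa\cdot q_\kappa(\alpha)+r_\kappa(\alpha)$ and $\beta=\kappa\cdot q_\kappa(\beta)+r_\kappa(\beta)$. I would apply Lemma~\ref{lem:decompinver} with $n=2$, writing $\alpha=\alpha_1+\alpha_2$ with $\alpha_1:=\kappa\cdot q_\kappa(\alpha)$ and $\alpha_2:=r_\kappa(\alpha)$, and $\beta=\beta_2+\beta_1$ with $\beta_1:=\kappa\cdot q_\kappa(\beta)$ and $\beta_2:=r_\kappa(\beta)$; note the reversed indexing that the lemma is set up to handle. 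One must check the cardinality hypotheses $|\alpha_1|=|\beta_1|=\kappa$ (immediate, since these are nonzero multiples of $\kappa$) and $|\alpha_2|=|\beta_2|$; the latter is the one subtlety — it need not hold literally, so I would first reduce to the case where $r_\kappa(\alpha)$ and $r_\kappa(\beta)$ are equipotent by padding the smaller remainder using Lemma~\ref{lem:monotony-restr} (enlarging $\alpha$ or $\beta$ only increases $\varphi^+_L$, and enlarging a remainder below $\kappa$ does not change $q_\kappa$ and only moves $|r_\kappa(\alpha)+r_\kappa(\beta)|^+$ upward or leaves it fixed), or alternatively split off only a common initial chunk. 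The cleanest route is: without loss of generality $r_\kappa(\beta)\leq r_\kappa(\alpha)$, and enlarge $\beta$ to $\beta':=\kappa\cdot q_\kappa(\beta)+r_\kappa(\alpha)$ so that both remainders equal $r_\kappa(\alpha)$ — though then I must double-check that $|\kappa\cdot q_\kappa(\beta)+r_\kappa(\alpha)|=\kappa$ still and that the target ordinal on the right of~\eqref{eq:prop-minor-longdim2} is unchanged, which it is because $q_\kappa$ is unchanged and $r_\kappa(\alpha)+r_\kappa(\alpha)$ is equipotent with $r_\kappa(\alpha)+r_\kappa(\beta)$ (both have cardinality $|r_\kappa(\alpha)|$, so their Hartogs coincide).

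With the equipotence arranged, Lemma~\ref{lem:decompinver} gives
\[
\varphi_L^+(\alpha,\beta)\geq\varphi_L^+(\kappa\cdot q_\kappa(\alpha),\kappa\cdot q_\kappa(\beta))\uloplus\varphi_L^+(r_\kappa(\alpha),r_\kappa(\beta)).
\]
Now I would bound the two summands separately. For the first, Corollary~\ref{cor:minor-multiples} (applied with the ordinals $q_\kappa(\alpha)$ and $q_\kappa(\beta)$, which have cardinality at most $\kappa$) yields
\[
\varphi_L^+(\kappa\cdot q_\kappa(\alpha),\kappa\cdot q_\kappa(\beta))>\kappa\cdot(q_\kappa(\alpha)\otimes q_\kappa(\beta)),
\]
hence $\varphi_L^+(\kappa\cdot q_\kappa(\alpha),\kappa\cdot q_\kappa(\beta))\geq\kappa\cdot(q_\kappa(\alpha)\otimes q_\kappa(\beta))+1$; and since $\kappa$ is initial it is indecomposable, so this ordinal is in fact $\geq\kappa\cdot(q_\kappa(\alpha)\otimes q_\kappa(\beta))+\delta$ for every $\delta<\kappa$ — more precisely, using that $\kappa\cdot(q_\kappa(\alpha)\otimes q_\kappa(\beta))$ is a multiple of $\kappa$ and $\varphi_L^+$ of two multiples of $\kappa$ is again a strict upper bound, I can push the first summand arbitrarily close to $\kappa\cdot(q_\kappa(\alpha)\otimes q_\kappa(\beta))+\kappa$. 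For the second summand, I use that $\varphi_L^+(r_\kappa(\alpha),r_\kappa(\beta))$ is at least the cardinality of the common vertex set of the two small well orders, and more usefully that by taking the two remainder well orders on the \emph{same} vertex set with one being the reverse of the other on a suitable chain, the intersection can be made to have length $|r_\kappa(\alpha)+r_\kappa(\beta)|^+$ — here I would invoke the easy fact (a one-argument instance, or a direct Sierpiński-type construction) that two well orders on a set of cardinality $\lambda:=|r_\kappa(\alpha)+r_\kappa(\beta)|$ can intersect in an antichain, whence $\varphi_L^+(r_\kappa(\alpha),r_\kappa(\beta))\geq\lambda^+=|r_\kappa(\alpha)+r_\kappa(\beta)|^+$.

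Finally I would combine the two bounds through the natural sum $\uloplus$. Since $\kappa$ is indecomposable, the distributivity property~\eqref{eq:indec-distrib} gives $\kappa\cdot((q_\kappa(\alpha)\otimes q_\kappa(\beta))\oplus 1)=\kappa\cdot(q_\kappa(\alpha)\otimes q_\kappa(\beta))\oplus\kappa$, and a short computation with $\uloplus$ shows that a strict-upper-bound greater than $\kappa\cdot(q_\kappa(\alpha)\otimes q_\kappa(\beta))$ natural-summed with a strict-upper-bound greater than everything below $|r_\kappa(\alpha)+r_\kappa(\beta)|^+$ produces a strict upper bound of everything below $\kappa\cdot(q_\kappa(\alpha)\otimes q_\kappa(\beta))+|r_\kappa(\alpha)+r_\kappa(\beta)|^+$; that is, the right-hand side of~\eqref{eq:prop-minor-longdim2}. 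The main obstacle I anticipate is precisely the bookkeeping in this last step: one must be careful that $\uloplus$ of the two lower bounds genuinely reaches $\kappa\cdot(q_\kappa(\alpha)\otimes q_\kappa(\beta))+|r_\kappa(\alpha)+r_\kappa(\beta)|^+$ and not merely something below it, which relies on $\kappa\cdot(q_\kappa(\alpha)\otimes q_\kappa(\beta))$ being a multiple of the indecomposable $\kappa$ so that adding an ordinal $<\kappa$ on the right behaves additively, together with the fact that $|r_\kappa(\alpha)+r_\kappa(\beta)|^+\leq\kappa$ so the remainder part fits below the next multiple of $\kappa$. The equipotence reduction at the start is the other place where care is needed, but it is routine once one observes Hartogs numbers only see cardinality.
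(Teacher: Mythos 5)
Your skeleton is the right one --- decompose the two ordinals, apply Lemma~\ref{lem:decompinver}, and feed the main blocks into Corollary~\ref{cor:minor-multiples} --- and this is indeed how the paper proceeds. But the step that is supposed to produce the term $|r_\kappa(\alpha)+r_\kappa(\beta)|^+$ is false. You claim $\varphi_L^+(r_\kappa(\alpha),r_\kappa(\beta))\geq|r_\kappa(\alpha)+r_\kappa(\beta)|^+$ on the grounds that two well orders on a set of cardinality $\lambda$ ``can intersect in an antichain.'' They cannot when $\lambda$ is infinite: the intersection of two well orders embeds into their direct product, hence is a WPO and has no infinite antichain; moreover its length is equipotent with its vertex set, so every achieved length is an ordinal of cardinality $|r_\kappa(\alpha)|$ and therefore $\varphi_L^+(r_\kappa(\alpha),r_\kappa(\beta))<|r_\kappa(\alpha)|^+=|r_\kappa(\alpha)+r_\kappa(\beta)|^+$ whenever the remainders are infinite (compare Corollary~\ref{cor:kappakappa}: $\varphi_L^+(\omega,\omega)=\omega+1$, not $\omega_1$). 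So a two-block decomposition pairing $r_\kappa(\alpha)$ against $r_\kappa(\beta)$ can never reach the Hartogs term, no matter how the bookkeeping with $\uloplus$ is done. Two further, smaller problems: your decomposition $\beta=\beta_2+\beta_1=r_\kappa(\beta)+\kappa\multordby q_\kappa(\beta)$ equals $\kappa\multordby q_\kappa(\beta)$, not $\beta$ (the remainder is absorbed on the left), and your equipotence reduction runs backwards --- enlarging $\beta$ to $\beta'$ only gives $\varphi_L^+(\alpha,\beta)\leq\varphi_L^+(\alpha,\beta')$ by Lemma~\ref{lem:monotony-restr}, so a lower bound for the enlarged pair does not transfer to the original one.

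The missing idea is to let the partners of the remainders \emph{vary}. The paper uses, for arbitrary ordinals $\delta$ equipotent with $r_\kappa(\beta)$ and $\gamma$ equipotent with $r_\kappa(\alpha)$, the three-block decompositions $\alpha=\delta+\kappa\multordby q_\kappa(\alpha)+r_\kappa(\alpha)$ and $\beta=\gamma+\kappa\multordby q_\kappa(\beta)+r_\kappa(\beta)$ (these really equal $\alpha$ and $\beta$, since $\delta,\gamma<\kappa$ are absorbed). Lemma~\ref{lem:decompinver} then pairs $\delta$ with $r_\kappa(\beta)$ and $r_\kappa(\alpha)$ with $\gamma$ --- the equipotence hypotheses now hold by construction, with no padding --- and gives
\[
\varphi_L^+(\alpha,\beta)\;\geq\;\varphi_L^+(\delta,r_\kappa(\beta))\uloplus\varphi_L^+(\kappa\multordby q_\kappa(\alpha),\kappa\multordby q_\kappa(\beta))\uloplus\varphi_L^+(r_\kappa(\alpha),\gamma)\;>\;\delta\uloplus\kappa\multordby(q_\kappa(\alpha)\otimes q_\kappa(\beta))\uloplus\gamma,
\]
using Corollary~\ref{cor:minor-multiples} for the middle factor and Lemma~\ref{lem:basic-longdim2}\eqref{lem:basic-longdim2:obviousminor} for the outer two. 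The Hartogs term then appears only after taking the supremum over all such $\gamma$ and $\delta$: the ordinals $\delta\oplus\gamma$ are cofinal below $|r_\kappa(\alpha)+r_\kappa(\beta)|^+$, and since the middle term is a multiple of the indecomposable $\kappa$, the least strict upper bound of the right-hand members is exactly $\kappa\multordby(q_\kappa(\alpha)\otimes q_\kappa(\beta))+|r_\kappa(\alpha)+r_\kappa(\beta)|^+$. Your final absorption argument is fine once this supremum over varying partners is in place, but without it the proof cannot close.
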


\begin{proof}
Observe that,
for any ordinals $\gamma$ equipotent to $r_\kappa(\alpha)$ and $\delta$ equipotent to $r_\kappa(\beta)$~:
\begin{itemize}
\item 
$\alpha=\delta+\kappa\multordby q_\kappa(\alpha)+r_\kappa(\alpha)$, and 
\item 
$\beta=\gamma+\kappa\multordby q_\kappa(\beta)+r_\kappa(\beta)$,
\end{itemize}
and then invoke Lemma~\ref{lem:decompinver} with these two three-block decompositions 
\invis{(see Figure~\ref{longdim2-absorption})}
to get~:
\[
\varphi_L^+(\alpha,\beta)
\geq
\varphi_L^+(\delta,r_\kappa(\beta))
\uloplus
\varphi_L^+(\kappa\multordby q_\kappa(\alpha),\kappa\multordby q_\kappa(\beta))
\uloplus
\varphi^+_L(r_\kappa(\alpha),\gamma)
\]
which, given Corollary~\ref{cor:minor-multiples} (and the definition of $\uloplus$), yields~:
\[
\varphi_L^+(\alpha,\beta)
>
\delta
\uloplus
\kappa\multordby(q_\kappa(\alpha)\otimes q_\kappa(\beta))
\uloplus
\gamma.
\]
Then observe that the the least strict upper bound of the right-hand member of this last inequality is equal to right-hand member of~\eqref{eq:prop-minor-longdim2}.
\end{proof}


\subsection{Majoration}\label{section:majoration}

For any ordinals $\alpha$ and $\beta$,  let  $\varphi(\alpha,\beta)$ denote the  supremum of the length $\ell(\bfa\sqcap\bfb)$ where  $\bfa$ and $\bfb$ are two WQO on the same set and $\ell(\bfa)=\alpha$ and $\ell(\bfb)=\beta$. We shall just write $\varphi^+(\alpha,\beta)$ for $\varphi_P^+(\alpha,\beta)$.
Note that $\varphi^+(\alpha,\beta)=\varphi(\alpha,\beta)+1$ if $\varphi(\alpha,\beta)$ is realized by some pair of WQO,
and that  $\varphi^+(\alpha,\beta)=\varphi(\alpha,\beta)$ otherwise (Item~\eqref{lem:basic-longdim2:varphivavarphiplus} of Lemma~\ref{lem:basic-longdim2} below).

\begin{lem}\label{lem:basic-longdim2}
\label{lem:isoton-equipot}
Consider two ordinals $\alpha$ and $\beta$.
\begin{enumerate}
  \item\label{lem:basic-longdim2:symetrie} 
  $\varphi^+(\alpha,\beta)=\varphi^+(\beta,\alpha)$.
  \item\label{lem:basic-longdim2:nullite} 
  $\varphi^+(\alpha,\beta)=0$ \iff\ $\alpha$ and $\beta$ fail to be equipotent.
  \item\label{lem:basic-longdim2:entiers} 
  For every integer $n$, $\varphi(n,n)=n$ and $\varphi^+(n,n)=n+1$.
  \item\label{lem:basic-longdim2:obviousminor} 
  $|\alpha|=|\beta|\Ra\varphi^+(\alpha,\beta)>\max(\alpha,\beta)$.
  \item\label{lem:basic-longdim2:varphivavarphiplus}  
  $\varphi(\alpha,\beta)\leq\varphi^+(\alpha,\beta)\leq\varphi(\alpha,\beta)+1$, and $\varphi^+(\alpha,\beta)=\varphi(\alpha,\beta)+1$
  \iff\ the corresponding supremum is attained,
  \iff\ $\varphi^+(\alpha,\beta)$ is a successor ordinal.
  \item\label{lem:basic-longdim2:major} 
  $\varphi(\alpha,\beta)\leq\alpha\otimes\beta$.
%
  \item\label{lem:basic-longdim2:isoton-equipot} 
Given a third ordinal $\alpha'$~:
$
|\alpha|\leq\alpha'\leq\alpha\Ra\varphi(\alpha',\beta)\leq\varphi(\alpha,\beta)
$.
\item\label{lem:basic-longdim2:sharpplus}
$|\alpha|=|\beta|\Lgrh\varphi^+(\alpha,\beta)=\varphi_\#(\alpha,\beta)$.
\end{enumerate}
\end{lem}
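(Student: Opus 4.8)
The plan is to treat the eight items in turn; the bookkeeping items (1, 2, 3, 5, 6) are immediate from the definitions and from~\eqref{eq:dejongh}--\eqref{eq:restriction}, and the substance is in items 4, 7 and 8.

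First the easy ones. Item 1 holds because $\bfa\sqcap\bfb=\bfb\sqcap\bfa$, so the two defining families of posets coincide. For item 6, each $\bfa\sqcap\bfb$ order-embeds, via $x\mapsto(x,x)$, into the direct product $\bfa\times\bfb$, so by~\eqref{eq:restriction} and the de~Jongh--Parikh product formula in~\eqref{eq:dejongh}, $\ell(\bfa\sqcap\bfb)\le\ell(\bfa\times\bfb)=\ell(\bfa)\otimes\ell(\bfb)=\alpha\otimes\beta$; take the supremum over realizing pairs. Item 5 is the abstract fact that a set $S$ of ordinals satisfies $\sup S\le\sup^+S\le(\sup S)+1$, with equality on the right precisely when $S$ has a greatest element, which is precisely when $\sup^+S$ is a successor; apply this with $S$ the set of realized lengths $\ell(\bfa\sqcap\bfb)$. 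Item 2: if $\alpha$ and $\beta$ are not equipotent then, the length of a WPO being equipotent with its vertex set, no single vertex set can carry WPO of lengths $\alpha$ and $\beta$, so $S=\varnothing$ and $\varphi^+(\alpha,\beta)=\sup^+\varnothing=0$; when they are equipotent, item 4 gives $\varphi^+(\alpha,\beta)>0$. Item 3: on an $n$-element vertex set every poset has length at most $n$, while $n$ is realized by taking $\bfa=\bfb$ a fixed $n$-chain, so $\varphi(n,n)=n$ is attained and $\varphi^+(n,n)=n+1$.

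Item 4: by item 1 we may assume $\alpha\ge\beta$; since $|\alpha|=|\beta|$, pick well orders $\bfa,\bfb$ of types $\alpha,\beta$ on one and the same vertex set $X$. Then $\bfa\sqcap\bfb$ is a WPO (it embeds into $\bfa\times\bfb$) of which $\bfa$ is an edge-extension, so every linear extension of $\bfa$ is a linear extension of $\bfa\sqcap\bfb$, whence $\ell(\bfa\sqcap\bfb)\ge\ell(\bfa)=\alpha=\max(\alpha,\beta)$ and $\varphi^+(\alpha,\beta)\ge\alpha+1>\max(\alpha,\beta)$. Item 7: we may assume $\alpha'$ and $\beta$ equipotent (otherwise both sides vanish by item 2, given $|\alpha|\le\alpha'\le\alpha$). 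Let $\bfa',\bfb$ be WQO of lengths $\alpha',\beta$ on a common vertex set $X$; then $|X|=|\alpha'|=|\beta|$, and $|\alpha|\le\alpha'\le\alpha$ forces $|\alpha|=|\alpha'|=|X|$, so Lemma~\ref{lem:increase-varphi} (with $n=2$, raising the first length from $\alpha'$ to $\alpha$ and keeping the second at $\beta$) supplies a superset $Y\supseteq X$ and WQO $\bfq_1,\bfq_2$ on $Y$ with $\bfq_1\restriction X=\bfa'$, $\bfq_2\restriction X=\bfb$, $\ell(\bfq_1)=\alpha$, $\ell(\bfq_2)=\beta$; then $\ell(\bfa'\sqcap\bfb)=\ell((\bfq_1\sqcap\bfq_2)\restriction X)\le\ell(\bfq_1\sqcap\bfq_2)\le\varphi(\alpha,\beta)$ by~\eqref{eq:restriction}, and taking the supremum over $\bfa',\bfb$ gives $\varphi(\alpha',\beta)\le\varphi(\alpha,\beta)$.

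Item 8 is the one that ties things together. The inequality $\varphi^+(\alpha,\beta)\le\varphi_\#(\alpha,\beta)$ needs no hypothesis: for a realizing pair, $\ell(\bfa\sqcap\bfb)\le\varphi(\alpha,\beta)$, and $\varphi(\alpha,\beta)$ is one of the values $\varphi(\alpha',\beta')$ with $\alpha'\le\alpha,\beta'\le\beta$, so $\ell(\bfa\sqcap\bfb)<\varphi_\#(\alpha,\beta)$. For the reverse inequality, with $|\alpha|=|\beta|=\kappa$: using item 7 twice (also via item 1), together with item 6 and the observations that $\varphi(\alpha',\beta')<\kappa$ when $\alpha',\beta'$ are equipotent of cardinality $<\kappa$ and $\varphi(\alpha',\beta')=0$ when they are not equipotent (item 2), one shows $\varphi(\alpha',\beta')\le\varphi(\alpha,\beta)$ for all $\alpha'\le\alpha,\beta'\le\beta$, and moreover that any realizer of $\varphi(\alpha',\beta')$ for $\alpha',\beta'$ equipotent to $\alpha,\beta$ pads up (Lemma~\ref{lem:increase-varphi}) to a pair of lengths exactly $\alpha,\beta$. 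The delicate point that remains — and the step I expect to be the main obstacle — is the supremum bookkeeping: one must control the value $\varphi(\alpha,\beta)$ itself, which enters $\varphi_\#(\alpha,\beta)$, against $\varphi^+(\alpha,\beta)$, that is, by item 5, decide when the supremum defining $\varphi(\alpha,\beta)$ is attained for equipotent arguments; this is transparent in the finite case (item 3) but in general requires producing an explicit realizing pair, or amalgamating realizers, and is precisely where the conventions of Section~\ref{section:specific-conven-ordin-oper} and Lemma~\ref{lem:increase-varphi} must be deployed with care.
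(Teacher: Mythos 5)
Your items (1)--(7) are correct and follow the paper's own (very terse) proof of this lemma: there, (1), (2), (3) and (5) are dismissed as obvious, (4) is proved by the same edge-extension remark you make, (6) by embedding $\bfa\sqcap\bfb$ into the direct product and applying~\eqref{eq:restriction} together with the de Jongh--Parikh formula~\eqref{eq:dejongh}, and (7) by invoking Lemma~\ref{lem:increase-varphi}; your write-up of (7) merely supplies the padding details that the paper leaves implicit.

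Item (8) is where you stop, and the obstacle you flag is genuine --- in fact it cannot be overcome, so you were right not to force the argument through. Since the pair $(\alpha,\beta)$ itself occurs among the admissible substitutions in the definition of $\varphi_\#(\alpha,\beta)$, one has $\varphi_\#(\alpha,\beta)\geq\varphi(\alpha,\beta)+1$ unconditionally; combining this with item (7) (and with item (6) to discard pairs of smaller cardinality, whose $\varphi$-value lies below $\kappa\leq\varphi(\alpha,\beta)$) yields exactly $\varphi_\#(\alpha,\beta)=\varphi(\alpha,\beta)+1$. Hence, by item (5), the asserted equality $\varphi^+(\alpha,\beta)=\varphi_\#(\alpha,\beta)$ holds if and only if the supremum defining $\varphi(\alpha,\beta)$ is attained, equivalently if and only if $\varphi^+(\alpha,\beta)$ is a successor ordinal. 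This is inconsistent with Theorem~\ref{theo:main}, whose right-hand side $\kappa\cdot(q_\kappa(\alpha)\otimes q_\kappa(\beta))+|r_\kappa(\alpha)+r_\kappa(\beta)|^+$ is a limit ordinal whenever $r_\kappa(\alpha)+r_\kappa(\beta)$ is infinite (for instance $\varphi^+(\omega_1+\omega,\omega_1)=\omega_1\cdot 2$); in such cases $\varphi^+=\varphi<\varphi_\#=\varphi+1$. The paper's own one-line justification of (8) (``Consider Item (7)'') establishes only $\varphi^+(\alpha,\beta)\leq\varphi_\#(\alpha,\beta)=\varphi(\alpha,\beta)+1$, and that weaker identity is what you should record in place of (8). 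Be aware that the majoration (Proposition~\ref{prop:major-longdim2:bis}) invokes (8) in the direction $\varphi_\#\leq\varphi^+$, which is precisely the direction that fails in the example above, so an attainment argument is not the way out; the downstream use has to be reworked around $\varphi_\#=\varphi+1$ instead.
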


\begin{proof}
Items~\eqref{lem:basic-longdim2:symetrie}, \eqref{lem:basic-longdim2:nullite}, \eqref{lem:basic-longdim2:entiers} and~\eqref{lem:basic-longdim2:varphivavarphiplus} are obvious. 
Below $\bfa$ and $\bfb$ denote well orders of type $\alpha$ and $\beta$ with the same vertex set.
\begin{enumerate}
  \item[\eqref{lem:basic-longdim2:obviousminor}] 
Each of $\bfa$ and $\bfb$ is an extension of $\bfa\sqcap\bfb$.
  \item[\eqref{lem:basic-longdim2:major}]  
Note that, as mentionned in the introduction, the poset $\bfa\sqcap\bfb$ is embeddable into the direct product $\bfa\otimes\bfb$. 
Thus $\ell(\bfa\sqcap\bfb) \leq \ell(\bfa\otimes\bfb)=\ell(\bfa)\otimes\ell(\bfb)=\alpha\otimes \beta$  by~\eqref{eq:restriction} and de Jongh-Parikh formula~\eqref{eq:dejongh}.
\item[\eqref{lem:basic-longdim2:isoton-equipot}]  
\invis{
(See Figure~\ref{isoton-equipot}.)
Let us assume that $\alpha$ is infinite and equipotent with $\beta$ (otherwise the conclusion is obvious).
We vertex-extend each intersection $\bfa'\sqcap\bfb$ of two well-orders of types $\alpha'$ and $\beta$ to 
an order $\bfa\sqcap\bfb$ intersection of two well-orders of types $\alpha$ and $\beta$.

To this end, consider the ordinal $\gamma:=(-\alpha')+\alpha$ such that $\alpha=\beta+\gamma$.
We plug a chain of type $\gamma$ above $\bfa'$, and insert these new vertices in the initial segment of $\bfa$ of type $|\gamma|$.

Poursuivre~\dots
}
Invoke Lemma~\ref{lem:increase-varphi}.  
\item[\eqref{lem:basic-longdim2:sharpplus}] 
Consider Item~\eqref{lem:basic-longdim2:isoton-equipot}.
\end{enumerate}
\end{proof}

\dessins{isoton-equipot}{isoton-equipot}{%
Proof of Lemma~\ref{lem:basic-longdim2}-\ref{lem:basic-longdim2:isoton-equipot}~:
$\bfa'\sqcap\bfb\hkra\bfa\sqcap\bfb$ donc (\eg\ par $OEP$) $\ell(\bfa'\sqcap\bfb)\leq\ell(\bfa\sqcap\bfb)$.
}

\begin{lem}\label{lem:indmaj}
For any ordinals $\alpha$ and $\beta$~:
\[
\varphi(\alpha,\beta)\leq\varphi_\#(\ul\alpha,\beta)\uloplus\varphi_\#(\alpha,\ul\beta).
\]
\end{lem}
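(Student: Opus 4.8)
The plan is to prove this inductively, using the inductive computation of length~\eqref{eq:induction} on an intersection $\bfp:=\bfa\sqcap\bfb$ of two WQO $\bfa$, $\bfb$ on a common vertex set $X$ with $\ell(\bfa)=\alpha$ and $\ell(\bfb)=\beta$. Fix such a pair and recall that $\ell(\bfp)=\sup^+_{x\in X}\ell(\{\bfp\not\geq x\})$. The key observation is that for a fixed $x\in X$, the strict down-set (in the $\bfp$-sense) $\{\bfp\not\geq x\}$ consists of those $y$ with $y\not\geq_{\bfp}x$, i.e.\ $y\not\geq_{\bfa}x$ \emph{or} $y\not\geq_{\bfb}x$. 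So I would split $X\bs\{y:y\geq_{\bfp}x\}$ into the two pieces $U:=\{\bfa\not\geq x\}$ and $V:=\{X\not\geq_{\bfa}x\}^{c}$-side contribution, more precisely write $\{\bfp\not\geq x\}$ as the union of the induced suborder on $\{\bfa\not\geq x\}$ and the induced suborder on $\{y:y\geq_{\bfa}x\text{ and }y\not\geq_{\bfb}x\}$. The first piece is an induced suborder of $\bfa\sqcap\bfb$ in which the $\bfa$-component has length $<\alpha$ (since $\{\bfa\not\geq x\}$ is contained in the proper initial segment of the well-order $\bfa$ determined by $x$, it has length strictly below $\alpha$), hence by definition of $\varphi$ its length is $<\varphi_\#(\ul\alpha,\beta)$. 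Symmetrically, the second piece is an induced suborder of $\bfa\sqcap\bfb$ whose $\bfb$-component has length $<\beta$, so its length is $<\varphi_\#(\alpha,\ul\beta)$.

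Next, by~\eqref{eq:restriction} the length of the union of two induced suborders is bounded by the Hessenberg sum of their lengths, so
\[
\ell(\{\bfp\not\geq x\})\leq\ell(\{\bfp\not\geq x\}\restriction U)\oplus\ell(\{\bfp\not\geq x\}\restriction(X\bs U))
<\varphi_\#(\ul\alpha,\beta)\oplus\varphi_\#(\alpha,\ul\beta),
\]
where the strict inequality uses that both summands are strictly below the respective sharp-values and $\oplus$ is strictly monotone in each argument. Taking $\sup^+$ over $x\in X$ gives $\ell(\bfp)\leq\varphi_\#(\ul\alpha,\beta)\oplus\varphi_\#(\alpha,\ul\beta)$; but that is the \emph{value} of $\uloplus$ on the two sharp-quantities only if those quantities are themselves suitably approximable. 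To be careful I would instead argue directly that each $\ell(\{\bfp\not\geq x\})$ lies in the set whose least strict upper bound defines $\varphi_\#(\ul\alpha,\beta)\uloplus\varphi_\#(\alpha,\ul\beta)$: the first summand can be taken to be any ordinal strictly below $\varphi_\#(\ul\alpha,\beta)$ that bounds $\ell(\{\bfp\not\geq x\}\restriction U)$, and likewise for the second, and such strict bounds exist precisely because $\ell(\{\bfp\not\geq x\}\restriction U)<\varphi_\#(\ul\alpha,\beta)$ and similarly on the other side. Hence $\ell(\{\bfp\not\geq x\})<\varphi_\#(\ul\alpha,\beta)\uloplus\varphi_\#(\alpha,\ul\beta)$ for every $x$, and therefore $\ell(\bfp)=\sup^+_x\ell(\{\bfp\not\geq x\})\leq\varphi_\#(\ul\alpha,\beta)\uloplus\varphi_\#(\alpha,\ul\beta)$. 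Taking the supremum over all admissible pairs $\bfa$, $\bfb$ yields $\varphi(\alpha,\beta)\leq\varphi_\#(\ul\alpha,\beta)\uloplus\varphi_\#(\alpha,\ul\beta)$, as desired.

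\textbf{Main obstacle.} The delicate point is the passage from the pointwise bound ``$\ell(\{\bfp\not\geq x\})<\varphi_\#(\ul\alpha,\beta)\oplus\varphi_\#(\alpha,\ul\beta)$'' to ``$\ell(\{\bfp\not\geq x\})$ belongs to the set defining $\varphi_\#(\ul\alpha,\beta)\uloplus\varphi_\#(\alpha,\ul\beta)$''. This requires knowing that the two summands $\ell(\{\bfp\not\geq x\}\restriction U)$ and $\ell(\{\bfp\not\geq x\}\restriction(X\bs U))$ are each \emph{strictly} dominated by $\varphi_\#(\ul\alpha,\beta)$ and $\varphi_\#(\alpha,\ul\beta)$ respectively, which in turn rests on the fact that $\{\bfa\not\geq x\}$, being a proper initial segment of the well-order $\bfa$ of length $\alpha$, has length strictly less than $\alpha$ — and then $\varphi_\#$ on an underlined argument that is strictly below $\alpha$ gives a value strictly below $\varphi_\#(\ul\alpha,\beta)$, using that $\varphi$ is monotone in the relevant variable (Item~\eqref{lem:basic-longdim2:isoton-equipot} of Lemma~\ref{lem:basic-longdim2}). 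One should also double-check the bookkeeping on which vertex set carries which order when restricting $\bfp$, and that the definitions of $\varphi_\#$ with underlined arguments are being applied with the correct monotonicity hypotheses; none of this is hard, but it is where a hasty argument could go wrong.
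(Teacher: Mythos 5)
Your argument is correct and is essentially the paper's own proof: both use the inductive formula~\eqref{eq:induction}, split $\{\bfp\not\geq x\}$ along the $\bfa$-side and the $\bfb$-side, bound each piece strictly below $\varphi_\#(\ul\alpha,\beta)$ resp.\ $\varphi_\#(\alpha,\ul\beta)$ because its $\bfa$- (resp.\ $\bfb$-) component is a proper initial segment, and then conclude via the definition of $\uloplus$ exactly as you do in your ``main obstacle'' paragraph (the paper writes the two pieces as the overlapping cover $\{X\not\geq_\bfa x\}\cup\{X\not\geq_\bfb x\}$ rather than your disjoint split, which changes nothing). The only cosmetic slip is calling $\bfa$ a well-order when the lemma concerns general WPO; the fact you need, that $\ell(\{\bfa\not\geq x\})<\ell(\bfa)$, holds for WPO by~\eqref{eq:induction}.
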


\footinvis{
Noter qu'on ne saurait remplacer $\varphi$ par $\varphi^+$~: pour tout cardinal $\kappa$,
$\varphi^+(\kappa,\kappa)=\kappa+1$ tandis que $\varphi_\#^+(\ul \kappa, \kappa)\uloplus\varphi_\#^+(\kappa,\ul \kappa)=\kappa\uloplus \kappa$,
qui est \'egal \`a $\kappa$ lui-m\^eme lorsque le cardinal $\kappa$ est infini.
}

\invis{
La version originiale de ce lemme \'etait \'enonc\'ee dans le cas particulier de l'interseciton de deux bon ordres.
On a \'etendu (son acception) aux intersections de WPO conform\'ement \`a l'(ancienne) observation suivante.
On a donc substit\'e, dans la preuve, WPO of length $\alpha$ \`a bon ordre de type $\alpha$.

Ce lemme, ainsi que sa preuve (il convient juste d'ajuster suivant la Footnote~\ref{ftnte:chain-wpo:lem-indmaj})
demeure valide dans le cas o\`u la fonction $\varphi$ est relative \`a la longueur de l'intersection de deux WPO.
V\'erifier.
}

\footinvis{
On n'aura, pour le moment, pas eu \`a se soucier de l'\'eventuelle croissance de $\varphi^+$~!!!!!!!!!!!!!!!!!!!!!!!!!!!!!!!!!!!!!!!!!!!!!!!
}

\begin{proof}[Proof of Lemma~\ref{lem:indmaj}]
If $|\alpha|\neq|\beta|$ then the left-hand member of the inequality is $0$.
So assume that $|\alpha|=|\beta|$
and consider two WPO $\bfa$ and $\bfb$ on a set $X$ of respective lenghts $\alpha$ and $\beta$, 
and let $\bfp$ denote the poset intersection $\bfa\sqcap\bfb$.
Recall~\eqref{eq:induction}~:
\[
\ell\bfp=\sup^+_{x\in X}\ell\{\bfp\not\geq_\bfp x\}
\] 
while~:
\footinvis{
\`A d\'eplacer~:
Here, since $\bfa$ and $\bfb$ are total, 
\[
\{X\not\geq_\bfp x\}=\{X<_\bfa\vee<_\bfb x\}=\{X<_\bfa x\}\cup\{X<_\bfb x\}=\{X<_\bfa\wedge<_\bfb x\}\dot\cup\{X<_\bfa\wedge\geq_\bfb x\}\dot\cup\{X<_\bfb\wedge\geq_\bfa x\}
\]
\dessin{tricot}

Noter que $\bfp$ admet une cha\^\i ne cofinale \ssi\ chaque segment final non vide d'un des facteurs est cofinal dans l'autre. 
}
\[
\{X\not\geq_\bfp x\}=\{X\not\geq_\bfa x\}\cup\{X\not\geq_\bfb x\}
\]
so that~:
\[
\ell\{\bfp\not\geq_\bfp x\}\leq\ell\{\bfp\not\geq_\bfa x\}\oplus\ell\{\bfp\not\geq_\bfb x\}
\]
and then~:
\[
\ell\bfp\leq\sup^+_{x\in X}\ell\{\bfp\not\geq_\bfa x\}\oplus\ell\{\bfp\not\geq_\bfb x\}.
\] 
Now observe that the poset $\{\bfp\not\geq_\bfa x\}:=\bfp\restriction\{X\not\geq_\bfa x\}$ is the intersection of the posets%
\footinvis{
\label{ftnte:chain-wpo:lem-indmaj}
Le terme "chain" peut \^etre remplac\'e par "WPO" dans le cas o\`u on consid\`ere des intersections de WPO,
et juste en dessous, "type" peut alors \^etre remplac\'e par "length". 
} 
$\{\bfa\not\geq_\bfa x\}:=\bfa\restriction\{X\not\geq_\bfa x\}$ and $\{\bfb\not\geq_\bfa x\}:=\bfb\restriction\{X\not\geq_\bfa x\}$,
the first of which has length less than $\alpha$ and the second of which has length at most $\beta$. 
Therefore~:
\[
\ell\{\bfp\not\geq_\bfa x\}<\varphi_\#(\ul\alpha,\beta)
\]
and likewise~:
\[
\ell\{\bfp\not\geq_\bfb x\}<\varphi_\#(\alpha,\ul\beta)
\]
so that~:
\[
\ell\{\bfp\not\geq_\bfa x\}\oplus\ell\{\bfp\not\geq_\bfb x\}<\varphi_\#(\ul\alpha,\beta)\uloplus\varphi_\#(\alpha,\ul\beta).
\]
Henceforth~:
\[
\ell\bfp\leq\varphi_\#(\ul\alpha,\beta)\uloplus\varphi_\#(\alpha,\ul\beta)
\]
and finally~:
\[
\varphi(\alpha,\beta)\leq\varphi_\#(\ul\alpha,\beta)\uloplus\varphi_\#(\alpha,\ul\beta).
\]
\end{proof}

\dessins{dim2-wo-lem-indmaj}{dim2-wo-lem-indmaj}{Proof of Lemma~\ref{lem:indmaj}~:
$\{\bfp\not\geq_\bfa x\}=\{\bfa\not\geq_\bfa x\}\sqcap\{\bfb\not\geq_\bfa x\}=\{\bfa<_\bfa x\}\sqcap\{\bfb<_\bfa x\}$ and $\{X\not\geq_\bfp x\}$.%
\invis{
\
This picture show the situation in the case of the intersection of two linear orderings.
}
}

\begin{cor}\label{cor:kappakappa}
For every cardinal $\kappa$ (\ie\ initial ordinal)~:
\[
\varphi^+(\kappa,\kappa)=\kappa+1.
\]
\end{cor}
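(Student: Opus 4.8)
The plan is to obtain the equality by feeding $\alpha=\beta=\kappa$ into the inductive majoration of Lemma~\ref{lem:indmaj} and combining it with the elementary facts of Lemma~\ref{lem:basic-longdim2}, once the trivial finite case is set aside. If $\kappa$ is a finite cardinal, the statement is exactly Item~\eqref{lem:basic-longdim2:entiers} of Lemma~\ref{lem:basic-longdim2}, so I would assume henceforth that $\kappa$ is an infinite cardinal. The crux is then to show that $\varphi(\kappa,\kappa)=\kappa$: on the one hand a well order of type $\kappa$ is the intersection of itself with itself, so $\varphi(\kappa,\kappa)\geq\kappa$ and this supremum is attained; on the other hand I will prove $\varphi(\kappa,\kappa)\leq\kappa$ below. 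Granting this, Item~\eqref{lem:basic-longdim2:varphivavarphiplus} of Lemma~\ref{lem:basic-longdim2} gives $\varphi^+(\kappa,\kappa)=\varphi(\kappa,\kappa)+1=\kappa+1$.

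For the inequality $\varphi(\kappa,\kappa)\leq\kappa$, I would start from Lemma~\ref{lem:indmaj}, namely $\varphi(\kappa,\kappa)\leq\varphi_\#(\ul\kappa,\kappa)\uloplus\varphi_\#(\kappa,\ul\kappa)$, and bound each factor by $\kappa$. Unwinding the convention of Section~\ref{section:specific-conven-ordin-oper}, $\varphi_\#(\ul\kappa,\kappa)=\sup^+\{\varphi(\alpha',\beta'):\alpha'<\kappa,\ \beta'\leq\kappa\}$; for a typical term, either $|\alpha'|\neq|\beta'|$ and then $\varphi(\alpha',\beta')=0$ by Item~\eqref{lem:basic-longdim2:nullite}, or $|\alpha'|=|\beta'|$, in which case, $\kappa$ being a cardinal and $|\alpha'|\leq\alpha'<\kappa$, we get $|\beta'|<|\kappa|$, hence $\beta'<\kappa$, and then Item~\eqref{lem:basic-longdim2:major} yields $\varphi(\alpha',\beta')\leq\alpha'\otimes\beta'<\kappa$. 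So every term of that set is less than $\kappa$, whence $\varphi_\#(\ul\kappa,\kappa)\leq\kappa$, and symmetrically $\varphi_\#(\kappa,\ul\kappa)\leq\kappa$. Since $\kappa$, being initial, is indecomposable, $\kappa\uloplus\kappa=\kappa$, and by monotonicity of $\uloplus$ in each argument we conclude $\varphi(\kappa,\kappa)\leq\kappa\uloplus\kappa=\kappa$.

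The one point requiring some care --- the main obstacle, such as it is --- is the arithmetic fact invoked above that the natural product $\gamma\otimes\delta$ of two ordinals $\gamma,\delta<\kappa$ is again $<\kappa$, for $\kappa$ an infinite cardinal. I would justify it by recalling that the Cantor normal form of $\gamma\otimes\delta$ has as leading exponent the natural sum of the leading exponents of $\gamma$ and $\delta$, which are both $<\kappa$, so $\gamma\otimes\delta$ has cardinality $\max(\aleph_0,|\gamma|,|\delta|)<\kappa$ and is therefore $<\kappa$; in other words, every infinite cardinal is closed under the natural (Hessenberg) operations. All remaining steps are direct applications of the quoted lemmas.
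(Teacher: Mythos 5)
Your proof is correct and follows essentially the same route as the paper: apply Lemma~\ref{lem:indmaj} at $(\kappa,\kappa)$, bound $\varphi_\#(\ul\kappa,\kappa)$ and $\varphi_\#(\kappa,\ul\kappa)$ by $\kappa$, and conclude via the indecomposability identity $\kappa\uloplus\kappa=\kappa$ together with the attained lower bound $\varphi(\kappa,\kappa)\geq\kappa$. You merely make explicit what the paper leaves implicit (the closure of an infinite cardinal under $\otimes$, and the finite case via Item~\eqref{lem:basic-longdim2:entiers}), which is sound.
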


\begin{proof}
According to Lemma~\ref{lem:indmaj}~:
\[
\varphi(\kappa,\kappa)\leq\varphi_\#(\ul\kappa,\kappa)\uloplus\varphi_\#(\kappa,\ul \kappa)=\kappa\uloplus\kappa=\kappa
\]
so that $\varphi^+(\kappa, \kappa)\leq\varphi(\kappa, \kappa)+1\leq\kappa+1$.
The reverse inequality is straightforward.
\end{proof}

\begin{lem}\label{lem:major-sum:bis}
Given any ordinals $\alpha'$, $\alpha''$ and $\beta$~:
\begin{equation}\label{eq:major-sum:bis}
\varphi^+(\alpha'+\alpha'',\beta)\leq\varphi_\#(\alpha',\beta)\uloplus\varphi_\#(\alpha'',\beta)\leq\varphi_\#(\alpha',\beta)\uloplus|\alpha''|^+.
\end{equation}
\invis{
Il ne semble pas qu'on utilise cette seconde \'egalit\'e.
V\'erifier si, en la contournant, on n'a pas finalement compliqu\'e les choses.

In particular, when these three ordinals are equipotent~:
\begin{equation}\label{eq:major-sum-part:bis}
\varphi(\alpha'+\alpha'',\beta)\leq\varphi(\alpha',\beta)\oplus\varphi(\alpha'',\beta).
\end{equation}
}
\end{lem}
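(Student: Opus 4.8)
The plan is to establish the two inequalities of~\eqref{eq:major-sum:bis} in turn; the left‑hand one carries the content, and the right‑hand one is a routine cardinality estimate. For the left‑hand inequality: if $\alpha'+\alpha''$ and $\beta$ fail to be equipotent then $\varphi^+(\alpha'+\alpha'',\beta)=0$ and there is nothing to prove, so I would fix two WQO $\bfa$ and $\bfb$ on a common vertex set $X$ with $\ell(\bfa)=\alpha'+\alpha''$ and $\ell(\bfb)=\beta$, and put $\bfp:=\bfa\sqcap\bfb$. Applying Lemma~\ref{lem:longcut} to $\bfa$ and the decomposition $\alpha'+\alpha''$ of its length, I would split $X$ into an initial segment $X'$ of $\bfa$ with $\ell(\bfa\restriction X')=\alpha'$ and the complementary final segment $X''$ with $\ell(\bfa\restriction X'')=\alpha''$. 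Then $\bfp\restriction X'=(\bfa\restriction X')\sqcap(\bfb\restriction X')$ is the intersection of a WQO of length $\alpha'$ with the WQO $\bfb\restriction X'$, whose length is at most $\ell(\bfb)=\beta$ by~\eqref{eq:restriction}; hence, by the definitions of $\varphi$ and of $\varphi_\#$, $\ell(\bfp\restriction X')\leq\varphi(\alpha',\ell(\bfb\restriction X'))<\varphi_\#(\alpha',\beta)$, and symmetrically $\ell(\bfp\restriction X'')<\varphi_\#(\alpha'',\beta)$. The right‑hand inequality of~\eqref{eq:restriction} gives $\ell(\bfp)\leq\ell(\bfp\restriction X')\oplus\ell(\bfp\restriction X'')$, and since the two summands lie strictly below $\varphi_\#(\alpha',\beta)$ and $\varphi_\#(\alpha'',\beta)$ respectively, the definition of $\uloplus$ as a least strict upper bound yields $\ell(\bfp)<\varphi_\#(\alpha',\beta)\uloplus\varphi_\#(\alpha'',\beta)$. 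As $\bfa,\bfb$ were arbitrary, passing to the least strict upper bound over all such pairs gives $\varphi^+(\alpha'+\alpha'',\beta)\leq\varphi_\#(\alpha',\beta)\uloplus\varphi_\#(\alpha'',\beta)$.

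For the right‑hand inequality I would note that $\uloplus$ is nondecreasing in each argument (immediate from its definition), so it suffices to check $\varphi_\#(\alpha'',\beta)\leq|\alpha''|^+$. For any $\gamma\leq\alpha''$ and $\delta\leq\beta$, Item~\eqref{lem:basic-longdim2:major} of Lemma~\ref{lem:basic-longdim2} gives $\varphi(\gamma,\delta)\leq\gamma\otimes\delta$; moreover $\varphi(\gamma,\delta)\neq 0$ forces $|\gamma|=|\delta|$, in which case $\gamma\otimes\delta$ has cardinality at most $|\gamma|\leq|\alpha''|$, and hence $\varphi(\gamma,\delta)<|\alpha''|^+$. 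Thus every ordinal over which the least strict upper bound defining $\varphi_\#(\alpha'',\beta)$ ranges is $<|\alpha''|^+$, whence $\varphi_\#(\alpha'',\beta)\leq|\alpha''|^+$, and monotonicity of $\uloplus$ finishes the second inequality.

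I do not anticipate any real obstacle: the lemma is essentially the combination of the ``long cut'' Lemma~\ref{lem:longcut} with the extension/superadditivity bound~\eqref{eq:restriction}. The one point to watch in the first inequality is the bookkeeping that restricting $\bfb$ to $X'$ (or to $X''$) can only lower its length, so that $\bfp\restriction X'$ and $\bfp\restriction X''$ genuinely fall strictly below $\varphi_\#(\alpha',\beta)$ and $\varphi_\#(\alpha'',\beta)$; once that is in hand, the definitions of $\varphi_\#$ and of $\uloplus$ close the argument mechanically.
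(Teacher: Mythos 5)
Your argument is correct and matches the paper's proof essentially line for line: the same cut of the vertex set via Lemma~\ref{lem:longcut}, the same chain $\ell(\bfp)\leq\ell(\bfp\restriction X')\oplus\ell(\bfp\restriction X'')\leq\varphi(\alpha',\beta')\oplus\varphi(\alpha'',\beta'')<\varphi_\#(\alpha',\beta)\uloplus\varphi_\#(\alpha'',\beta)$, and the same passage to the least strict upper bound. The only nit is in your justification of $\varphi_\#(\alpha'',\beta)\leq|\alpha''|^+$ (which the paper asserts without proof): the claim that $\gamma\otimes\delta$ has cardinality at most $|\gamma|$ needs $\gamma$ infinite, and for finite equipotent $\gamma=\delta$ one should instead invoke $\varphi(\gamma,\gamma)=\gamma\leq\alpha''<|\alpha''|^+$ from Lemma~\ref{lem:basic-longdim2}.
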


\begin{proof}
Observe that if $\alpha:=\alpha'+\alpha''$ fails to be equipotent with $\beta$,
then the left-hand member of~\eqref{eq:major-sum:bis} is $0$, in which case this inequality is trivially satisfied.
So assume that $\alpha$ and $\beta$ are equipotent.

\invis{
(See Figure~\ref{dim2-lem-major-sum}.)
}
Given two posets $\bfa$ and $\bfb$ with the same vertex set $X$ and respective lengths $\alpha$ and $\beta$,
let $\bfp:=\bfa\sqcap\bfb$.
Consider a partition of $X$ into subsets $X'$ and $X''$ of respective lengths $\alpha'$ and $\alpha''$ \wrt\ $\bfa$
(\cf~Lemma~\ref{lem:longcut}). 
Letting $\beta'$ and $\beta''$ denote the respective lengths of $X'$ and $X''$ \wrt\ $\bfb$~: 
\[
\ell(\bfp)\leq\ell(\bfp\restriction X')\oplus\ell(\bfp\restriction X'')\leq\varphi(\alpha',\beta')\oplus\varphi(\alpha'',\beta'')
<\varphi_\#(\alpha',\beta)
\uloplus
\varphi_\#(\alpha'',\beta)
\leq\varphi_\#(\alpha',\beta)\uloplus|\alpha''|^+.
\]
For the left-hand inequality, recall~\eqref{eq:restriction}~;
the second one holds by definition of $\varphi$~;
the penultimate one holds by definition of $\varphi_\#$, given that, obviously, $\beta'\leq\beta$ and $\beta''\leq\beta$~;
the right-hand one follows from $\varphi_\#(\alpha'',\beta)\leq|\alpha''|^+$.
Then~\eqref{eq:major-sum:bis} holds 
by definition of $\varphi^+$.
\footinvis{
Besides, since $\bfp$ splits into the union of the intersection of WPO of length $\alpha'$ and of a WPO of length at most $\beta$ on the one hand and of a poset of cardinal $|\alpha''|$ on the other hand~:
\[
\ell(\bfp)\leq\ell(\bfp\restriction X')\oplus\ell(\bfp\restriction X'')<\varphi_\#(\alpha',\beta)\uloplus|\alpha''|^+
\] 
so that $\varphi^+(\alpha,\beta)\leq\varphi_\#(\alpha',\beta)\uloplus|\alpha''|^+$,
which can also be inferred from $\varphi^+(\alpha,\beta)<\varphi_\#(\alpha',\beta)\uloplus\varphi_\#(\alpha'',\beta)$,
given that $\varphi_\#(\alpha'',\beta)\leq|\alpha''|^+$.
}
\footinvis{
Besides, $\ell(\bfp)\leq\varphi(\alpha',\beta')\oplus\varphi(\alpha'',\beta'')$
implies $\varphi(\alpha,\beta)\leq\varphi(\alpha',\beta')\oplus\varphi(\alpha'',\beta'')$,
which implies $\varphi(\alpha,\beta)\leq\varphi(\alpha',\beta)\oplus\varphi(\alpha'',\beta)$
when $\alpha'$ and $\alpha''$ are both equipotent with $\beta$, 
given that $\beta'\leq\beta$ and $\beta''\leq\beta$.
}
\end{proof}

\dessins{dim2-lem-major-sum}{dim2-lem-major-sum}{Proof of Lemma~\ref{lem:major-sum}.
Figure correspondant \`a une variante (plus faible) dont nous nous contentons.}

\begin{prop}\label{prop:major-longdim2:bis}
Given two equipotent infinite ordinals $\alpha$ and $\beta$, of cardinality $\kappa$~:
\begin{equation}\label{eq:major-longdim2}
\varphi_R^+(\alpha,\beta)\leq\kappa\multordby(q_\kappa(\alpha)\otimes q_\kappa(\beta))+|r_\kappa(\alpha)+r_\kappa(\beta)|^+.
\end{equation}
\end{prop}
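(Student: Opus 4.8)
The goal is to bound $\varphi^+(\alpha,\beta)$ from above by $\kappa\multordby(q_\kappa(\alpha)\otimes q_\kappa(\beta))+|r_\kappa(\alpha)+r_\kappa(\beta)|^+$, where I write $q,r$ for $q_\kappa,r_\kappa$. The strategy is induction on the pair $(q(\alpha),q(\beta))$, or more robustly on $\alpha\oplus\beta$, using Lemma~\ref{lem:indmaj} as the main engine. First I would dispose of the base cases: when $q(\alpha)=q(\beta)=1$, i.e.\ $\alpha=\kappa+r(\alpha)$ and $\beta=\kappa+r(\beta)$, the claimed bound is $\kappa+|r(\alpha)+r(\beta)|^+$; more generally the case where at least one quotient is $0$ reduces to $\varphi^+(r(\alpha),r(\beta))$ with $r(\alpha),r(\beta)<\kappa$, which by Lemma~\ref{lem:basic-longdim2}-\eqref{lem:basic-longdim2:major} is at most $r(\alpha)\otimes r(\beta)<\kappa$, well below the claimed bound (and for $\alpha=\beta=\kappa$ this is Corollary~\ref{cor:kappakappa}).

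For the inductive step I would split according to whether $q(\alpha)$ and $q(\beta)$ are successors or limits, using Lemma~\ref{lem:major-sum:bis} to peel off a summand. Write $\alpha=\alpha'+\kappa$ when $q(\alpha)$ is a successor (so $q(\alpha')=q(\alpha)-1$, $r(\alpha')=r(\alpha)$ suitably rearranged using the absorption identities already exploited in the minoration proof, e.g.\ $\alpha=\delta+\kappa\multordby q(\alpha)+r(\alpha)$), so that Lemma~\ref{lem:major-sum:bis} gives $\varphi^+(\alpha,\beta)\leq\varphi_\#(\alpha',\beta)\uloplus\varphi_\#(\kappa,\beta)$. The second factor $\varphi_\#(\kappa,\beta)$ I would bound using Lemma~\ref{lem:indmaj} and induction by something like $\kappa\multordby q(\beta)+|r(\beta)|^+$ (or just $\leq|\beta|^+=\kappa^+$—but that is too weak, so I need the sharper value), and the first factor $\varphi_\#(\alpha',\beta)=\varphi^+(\alpha',\beta)$ (Lemma~\ref{lem:basic-longdim2}-\eqref{lem:basic-longdim2:sharpplus}) by the induction hypothesis. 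Then I would combine using the distributivity property~\eqref{eq:indec-distrib} (valid since $\kappa$ is indecomposable) together with $q(\alpha)\otimes q(\beta)=(q(\alpha')\oplus 1)\otimes q(\beta)=q(\alpha')\otimes q(\beta)\oplus q(\beta)$, so that $\kappa\multordby(q(\alpha')\otimes q(\beta))\oplus\kappa\multordby q(\beta)=\kappa\multordby(q(\alpha)\otimes q(\beta))$, and the remainder terms add up under $|\cdot|^+$ to $|r(\alpha)+r(\beta)|^+$ since the Hartog of a sum of two ordinals of cardinality $\le\kappa$ is $\le\kappa^+$ and in fact equals $|r(\alpha)+r(\beta)|^+$ after the bookkeeping. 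The limit case for $q(\alpha)$ (or $q(\beta)$) is handled by the inductive formula~\eqref{eq:induction}/the definition of $\varphi_\#$ as a strict sup: every poset $\bfp=\bfa\sqcap\bfb$ with $\ell(\bfa)=\alpha$ has $\ell\{\bfp\not\geq_\bfa x\}<\varphi_\#(\ul\alpha,\beta)$, and $\varphi_\#(\ul\alpha,\beta)=\sup\{\varphi^+(\alpha^*,\beta):\alpha^*<\alpha\}$ which by induction (applied to the $\alpha^*$, whose quotients are $<q(\alpha)$) stays $\le\kappa\multordby(q(\alpha)\otimes q(\beta))$, and taking $\sup^+$ over $x$ adds the remainder contribution.

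The main obstacle I anticipate is the careful bookkeeping of the remainders and the Hessenberg arithmetic: one must verify that $\varphi_\#(\ul\alpha,\beta)\uloplus\varphi_\#(\alpha,\ul\beta)$, once each factor is replaced by its inductive upper bound, does not exceed the target, and in particular that the $|r(\alpha)+r(\beta)|^+$ term is correctly produced rather than, say, $|r(\alpha)|^+\oplus|r(\beta)|^+$ or $|r(\alpha)+r(\beta)|^++|r(\alpha)+r(\beta)|^+$. Here the key algebraic facts are that $\kappa$ is indecomposable (so it absorbs on the left and~\eqref{eq:indec-distrib} applies), that $|r(\alpha)+r(\beta)|^+\le\kappa$ is itself indecomposable and absorbs smaller ordinals, and that $\kappa\multordby q\oplus\kappa\multordby q'=\kappa\multordby(q\oplus q')$. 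A secondary subtlety is making sure the induction is well-founded: ordering pairs $(\alpha,\beta)$ by $(q(\alpha)\oplus q(\beta),r(\alpha)\oplus r(\beta))$ lexicographically, or simply by $\alpha\oplus\beta$, works because peeling off a block of size $\kappa$ strictly decreases the quotient part while the remainder manipulations only involve ordinals $<\kappa$. Once the arithmetic is pinned down, the proof is a routine—if somewhat lengthy—induction, and I would organize it so that Lemma~\ref{lem:indmaj} handles limit quotients and Lemma~\ref{lem:major-sum:bis} handles successor quotients, with Corollary~\ref{cor:kappakappa} as the anchor.
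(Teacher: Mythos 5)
Your overall architecture is the same as the paper's: an induction anchored at $\varphi^+(\kappa,\kappa)=\kappa+1$ (Corollary~\ref{cor:kappakappa}), with Lemma~\ref{lem:major-sum:bis} used to split $\alpha$ additively and Lemma~\ref{lem:indmaj} used for the irreducible case. But your case split --- successor quotient versus limit quotient --- is the wrong dichotomy, and it leaves a genuine gap in the limit branch. The correct dichotomy is \emph{decomposable} versus \emph{additively indecomposable}. A decomposable limit quotient (e.g.\ $q_\kappa(\alpha)=\omega\cdot 2$) falls into your ``limit'' branch, where you propose to apply Lemma~\ref{lem:indmaj}; but there the argument overshoots. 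Concretely, take $\kappa=\omega$ and $\alpha=\beta=\omega^2\cdot 2$, so $q_\kappa(\alpha)=q_\kappa(\beta)=\omega\cdot2$, the remainders vanish, and the target is $\omega\multordby\bigl((\omega\cdot2)\otimes(\omega\cdot2)\bigr)=\omega^3\cdot4$. The best the induction hypothesis gives for $\varphi_\#(\ul\alpha,\beta)$ is $\sup^+$ of the bounds $\omega\multordby\bigl((\omega+n)\otimes(\omega\cdot2)\bigr)+\cdots=\omega^3\cdot2+\omega^2\cdot2n+\cdots$, which is $\omega^3\cdot3$; likewise for $\varphi_\#(\alpha,\ul\beta)$. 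Then
\[
(\omega^3\cdot3)\uloplus(\omega^3\cdot3)\;\geq\;\sup^+_{n<\omega}\bigl((\omega^3\cdot2+\omega^2 n)\oplus(\omega^3\cdot2+\omega^2 n)\bigr)\;\geq\;\omega^3\cdot5\;>\;\omega^3\cdot4,
\]
so Lemma~\ref{lem:indmaj} alone does not close this case. The absorption $\lambda\uloplus\lambda=\lambda$ that makes Lemma~\ref{lem:indmaj} usable requires $\lambda=\kappa\multordby(q_\kappa(\alpha)\otimes q_\kappa(\beta))$ to be indecomposable, which fails exactly when a quotient is decomposable.

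The paper's proof repairs this by inserting an intermediate case: after peeling off nonzero remainders with Lemma~\ref{lem:major-sum:bis} (your successor-style step, but note that when $r_\kappa(\alpha)>0$ you cannot write $\alpha=\alpha'+\kappa$, since the final block is the remainder $<\kappa$; the remainder must be peeled off first), it treats multiples of $\kappa$ with a \emph{decomposable} quotient by splitting $\alpha'=\alpha_1'\oplus\alpha_2'$ with both parts smaller, applying Lemma~\ref{lem:major-sum:bis} to that decomposition, and recombining with the distributivity property~\eqref{eq:indec-distrib}. Only when both $\alpha$ and $\beta$ are indecomposable multiples of $\kappa$ does it invoke Lemma~\ref{lem:indmaj}, and there $\lambda\uloplus\lambda=\lambda$ holds. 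If you replace your successor/limit split by this remainder / decomposable-quotient / indecomposable trichotomy, the rest of your plan (including the remainder bookkeeping via $|\gamma+\delta|^+\leq\kappa$ and the use of~\eqref{eq:indec-distrib}) goes through essentially as in the paper.
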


\invis{
Recall that we drop the subscript $R$ from $\varphi_R$.
}

\begin{proof}[Proof of Proposition~\ref{prop:major-longdim2:bis}]
Given $\kappa$, we prove~\eqref{eq:major-longdim2} by induction on $(\alpha,\beta)$.
First note that,
according to Corollary~\ref{cor:kappakappa},
\eqref{eq:major-longdim2} holds for $(\alpha,\beta)=(\kappa,\kappa)$.
So let us assume that $(\alpha,\beta)>(\kappa,\kappa)$, \ie\ that 
at least one of the inequalities $\alpha\geq\kappa$ and $\beta\geq\kappa$ is strict~;
let us assume that~\eqref{eq:major-longdim2} holds
for every pair $(\alpha_1,\beta_1)$ such that $(\kappa,\kappa)\leq(\alpha_1,\beta_1)<(\alpha,\beta)$~;
and let us prove that it holds also for $(\alpha,\beta)$.
We are led to distinguish three cases.
We shall first assume that $\alpha$ or $\beta$ is not a multiple of $\kappa$~; 
then we shall assume that 
they are both multiples of $\kappa$ but that at least one of them fails to be indecomposable~;
and it will remain to consider the case of both being 
indecomposable (and in particular multiples of $\kappa$).
Let 
$\alpha=\kappa\cdot \alpha'+\gamma$ and $\beta=\kappa\cdot \beta'+\delta$,
with $\gamma<\kappa$ and $\delta <\kappa$, denote the euclidian divisions of $\alpha$ and $\beta$ by $\kappa$.
\begin{enumerate}
\item
Assume that $\alpha$ or $\beta$ is not a multiple of $\kappa$, \eg\ that  $\gamma>0$. 
In this case~:
\begin{align*}
\varphi^+(\alpha,\beta)
&
\leq
\varphi_\#(\kappa\multordby\alpha',\kappa\multordby\beta'+\delta)\uloplus|\gamma|^+
&
\text{by~\eqref{eq:major-sum:bis}}
\\
&
=
\varphi^+(\kappa\multordby\alpha',\kappa\multordby\beta'+\delta)\uloplus|\gamma|^+
&
\text{Lemma~\ref{lem:basic-longdim2}\eqref{lem:basic-longdim2:sharpplus}}
\\
&
\leq
(\kappa\multordby(\alpha'\otimes\beta')+|\delta|^+)
\uloplus|\gamma|^+
&
\text{induction assumption}
\\
&
\leq
\kappa\multordby(\alpha'\otimes\beta')+|\delta+\gamma|^+
&
\text{easily checked.}
\end{align*}
\item
Let us assume now that $\alpha$ and $\beta$ are both multiples of $\kappa$ 
and that at least one fails to be indecomposable, \eg\ $\alpha$.
Observe that $\alpha$ being a multiple of $|\alpha|$, 
its being decomposable is equivalent to $\alpha':=q_{|\alpha|}(\alpha)$ being decomposable.
Thus we assume that $\alpha'=\alpha_1'+\alpha_2'$ with $\alpha_1'$ and $\alpha_2'$ both less than $\alpha'$,
and we can even assume that, indeed, $\alpha'=\alpha_1'+\alpha_2'=\alpha_1'\oplus\alpha_2'$.
Then~:
\begin{align*}
\varphi^+(\alpha,\beta)
&
=
\varphi^+(\kappa\multordby\alpha_1'+\kappa\multordby\alpha_2',\kappa\multordby\beta')
\\
&
\leq
\varphi_\#(\kappa\multordby\alpha_1',\kappa\multordby\beta')
\uloplus
\varphi_\#(\kappa\multordby\alpha_2',\kappa\multordby\beta')
&
\text{by~\eqref{eq:major-sum:bis}}
\\
&
=
\varphi^+(\kappa\multordby\alpha_1',\kappa\multordby\beta')
\uloplus
\varphi^+(\kappa\multordby\alpha_2',\kappa\multordby\beta')
&
\text{Lemma~\ref{lem:basic-longdim2}\eqref{lem:basic-longdim2:sharpplus}}
\\
&
\leq
(\kappa\multordby(\alpha_1'\otimes\beta')+1)
\uloplus
(\kappa\multordby(\alpha_2'\otimes\beta')+1)
&
\text{induction assumption}
\\
&
=
\kappa\multordby(\alpha_1'\otimes\beta')
\oplus
\kappa\multordby(\alpha_2'\otimes\beta')+1
\\
&
=
\kappa\multordby((\alpha_1'\oplus\alpha_2')\otimes\beta')+1
&
\text{by~\eqref{eq:indec-distrib}}
\end{align*}
which is indeed~\eqref{eq:major-longdim2},
given that the two remainders are $0$.
\item
Let us now assume that
$\alpha$ and $\beta$ are both indecomposable~;
in particular $\gamma=0$ and $\delta=0$.
Recall from Lemma~\ref{lem:indmaj} that~:
\[
\varphi(\alpha,\beta)\leq\varphi_\#(\ul\alpha,\beta)
\uloplus
\varphi_\#(\alpha,\ul\beta).
\]
We claim that $\lambda:=\kappa\multordby(\alpha'\otimes\beta')$ is a strict upper bound of $\{\varphi(\alpha,\beta):\alpha_1<\alpha,\beta_1\leq\beta\}$.
With this claim $\varphi_\#(\ul\alpha,\beta)\leq\lambda$, and likewise $\varphi_\#(\alpha,\ul\beta)\leq\lambda$.
Hence, given that $\lambda$ is indecomposable~:
\[
\varphi(\alpha,\beta)\leq\varphi_\#(\ul\alpha,\beta)
\uloplus
\varphi_\#(\alpha,\ul\beta)
\leq
\lambda\uloplus\lambda=\lambda=
\kappa\multordby(\alpha'\otimes\beta')
\]
which implies~\eqref{eq:major-longdim2}.

Thus it remains to check the claim.
To this end, consider $\alpha_1<\alpha=\kappa\multordby\alpha'$ and $\beta_1<\beta=\kappa\multordby\beta'$.
\begin{itemize}
\item 
If $\alpha'=1$, then $\alpha_1<\kappa$, so that~:
\[
\varphi(\alpha_1,\beta_1)<|\alpha_1|^+\leq\kappa\leq\kappa\multordby\beta'=\kappa\multordby(\alpha'\otimes\beta').
\] 
\item 
If $\alpha'>1$, then this ordinal, which is assumed to be indecomposable, is an infinite limit ordinal.
In this case $\alpha_1\leq\kappa\multordby\alpha''<\alpha$ for some $\alpha''<\alpha'$.
Then, invoking the induction assumption for the middle inequality~:
\[
\varphi(\alpha_1,\beta_1)\leq
\varphi(\kappa\multordby\alpha'',\kappa\multordby\beta')\leq
\kappa\multordby(\alpha''\otimes\beta')+1<
\kappa\multordby(\alpha'\otimes\beta').
\]
\end{itemize}
\end{enumerate}
\end{proof}


\end{document}